\documentclass[12pt]{amsart}
\usepackage{graphicx, color, enumerate}
\usepackage{amsfonts,amsthm,amsmath,tabularx,epsf}
\usepackage{listings}
\usepackage{booktabs}
\usepackage{mathtools}
\usepackage{url,hyperref}
\usepackage{pgf,tikz,pgfplots}
\usepackage{algorithm}
\usepackage{algpseudocode}  
\usepackage{mathrsfs}
\usepackage[utf8]{inputenc}
\usepackage[T1]{fontenc}
\usepackage[english]{babel}
\usepackage{float}
\newtheorem*{remark}{Remark}
\theoremstyle{plain}% Theorem-like structures provided by amsthm.sty
\newtheorem{theorem}{Theorem}[section]
\newtheorem{lemma}[theorem]{Lemma}

\newtheorem{corollary}[theorem]{Corollary}
\newtheorem{proposition}{Proposition}[section]
\newtheorem{conjecture}[theorem]{Conjecture}
\theoremstyle{definition}
\newtheorem{definition}[theorem]{Definition}
\newtheorem{example}[theorem]{Example}

\theoremstyle{remark}

\usepackage[top=3cm,left=3cm,right=3cm,bottom=3cm]{geometry}
\usetikzlibrary{positioning,arrows,calc}
\tikzset{
	modal/.style={>=stealth,shorten >=1pt,shorten <=1pt,auto,node distance=1.5cm,
		semithick},
	world/.style={circle,draw,minimum size=0.5cm,fill=gray!15},
	point/.style={circle,draw,inner sep=0.5mm,fill=black},
	reflexive above/.style={->,loop,looseness=7,in=120,out=60},
	reflexive below/.style={->,loop,looseness=7,in=240,out=300},
	reflexive left/.style={->,loop,looseness=7,in=150,out=210},
	reflexive right/.style={->,loop,looseness=7,in=30,out=330}
}
\usetikzlibrary{decorations.markings}
\definecolor{cadmiumgreen}{rgb}{0.0, 0.42, 0.24}

\newcommand{\gpp}{gpp}
\newcommand{\gpps}{gpps}

\pgfplotsset{compat=1.18}

\begin{document}
	\pagestyle{myheadings}
	
	\title[Most A$_\alpha$-eigenvalues of a tree are small]{Most A$_\alpha$-eigenvalues of a tree are small}
	\subjclass{05C50, 05C05, 15A18}
	\keywords{Laplacian matrix; tree; eigenvalues, average degree}
	%%%%%%%%%%%%%%%%%%%%%%%%%%%%%%%%%%%%%%%%%%%%%%%%%%%%%%% nome

	\author[E. P. Bartur\'{e}n]{Elvia P. Bartur\'{e}n}
	\address{Universidad Nacional Mayor de San Marcos
		Escuela Profesional de Matem\'{a}tica,	Av. Venezuela s/n, cuadra 34, Lima 1--Per\'{u}; Ciudad Universitaria, UNMSM}
	\email{\tt eperezb@unmsm.edu.pe}

	\author[E. R. Oliveira]{Elismar R. Oliveira}
	\address{Instituto de Matem\'atica e Estat\'{\i}stica, UFRGS, Porto Alegre, Brazil}
	\email{\tt elismar.oliveira@ufrgs.br}

	\begin{abstract}
		We study the distribution of $A_\alpha$-eigenvalues of a tree. We extend the works \cite{Jacobs2021} and \cite{SIN2020} by proving that the number of $A_\alpha$-eigenvalues, for $0\leq \alpha \leq \frac{1}{2}$, less than or equal to the average degree $d_\alpha= \alpha (2- 2/n)$ is, at least, $\lceil \frac{n}{2} \rceil$ for a tree with $n$ vertices. Several counterexamples for $\frac{1}{2} <  \alpha \leq 1$ are exhibited. We also derive the same result for other well-known families such as the Deformed Laplacian and the matrix $B_\beta$.
	\end{abstract}
	
	\maketitle
	%%%%%%%%%%%%%%%%%%%%%%%%%%%%%%%%%%%%%%%%%%%%%%%%%%%%%%%%%%%%%%%%%%%%%%%%%%%%%%%%%%%%%%%%%%%%%%%%%%%%%%%%%%%%%%%%%%%%%%%%%%%%%%%%%%%%%%%%%%%%%

	\section{Introduction}%\label{sec:intro}
	The distribution of the Laplacian eigenvalues of a graph in a given interval (and its variations) is a prominent problem in spectral graph theory (see, for example \cite{mohar1992laplace}).  
	
	We aim to extend the results in \cite{Jacobs2021} and \cite{SIN2020} to the eigenvalues of the $A_\alpha$ matrix of a tree. The underlying Laplacian conjecture was formulated in \cite{jacobs2018conjecture}. It is worth mentioning the recent work of \cite{junior2025}, who investigated the spectral distribution of $A_\alpha(G)$-eigenvalues, over subintervals of the real line, establishing lower and upper bounds on the number of such eigenvalues in terms of structural parameters of a graph $G$, including the number of pendant and quasi-pendant vertices, the domination, the matching, and the edge covering numbers.
	
	We also refer to \cite{grone1994laplacian2,guo2008limit,guo2011distribution,jacobs2018conjecture,hedetniemi2016domination,merris1991number,Jacobs2021,SIN2020} and references therein for related problems and applications.

	Our main result is 
	\begin{theorem}
		\label{thr:main}
		Let $0\leq \alpha\leq 1/2$. For any tree $T$ of order $n \geq 1$,
		$$m_T(\alpha)\left(-\infty, d_\alpha \right] \geq \left\lceil\frac{n}{2} \right\rceil.$$
	\end{theorem}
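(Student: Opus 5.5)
Recall $A_\alpha(T)=\alpha D+(1-\alpha)A$. I would adapt the diagonalization algorithm of Jacobs and Trevisan for trees, which is the tool behind \cite{Jacobs2021} and \cite{SIN2020}, to the matrix $A_\alpha(T)-d_\alpha I$. Root $T$ and process the vertices bottom-up. Each vertex $v$ starts with the value $a(v)=\alpha\deg(v)-d_\alpha$. For a child $c$ with $a(c)\neq 0$, subtract $(1-\alpha)^2/a(c)$ from $a(v)$. If some child has $a(c)=0$, use the special rule: one such child gets value $-(1-\alpha)^2/2$ and the parent gets $2$ (after rescaling), and the edge to the parent's parent is removed. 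By Sylvester's law of inertia, the number of eigenvalues of $A_\alpha(T)$ that are $\le d_\alpha$ equals the number of final diagonal values that are $\le 0$. So the goal is to show that at least $\lceil n/2\rceil$ of the final values are nonpositive.

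The key steps, in order, are as follows.

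(i) \emph{Leaves.} Every leaf starts with $a=\alpha-d_\alpha=\alpha(2/n-1)\le 0$. Equality holds only when $n=2$ or $\alpha=0$, and those cases I would handle directly.

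(ii) \emph{Pairing.} I would follow the strategy of \cite{Jacobs2021} and build an injection from the vertices with positive final value into the vertices with nonpositive final value. The natural choice is to pair a vertex $v$ ending with $a(v)>0$ with a child having a negative value. A vertex can only become positive if it has enough negative children, or a zero child (special rule), and in the special rule a positive $2$ is paired with a negative $-(1-\alpha)^2/2$ automatically. Using the normalization of \cite{SIN2020}, I would prove the invariant that each positive vertex has at least one negative child not already used by another positive vertex. This gives $\#\{a\le 0\}\ge\#\{a>0\}$, and hence the ceiling bound.

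(iii) \emph{The invariant.} The invariant would be proved by an inequality on how a parent's value depends on its children's values. Write $a(v)=\alpha d_v-d_\alpha-(1-\alpha)^2\sum_c 1/a(c)$. Positive children only decrease $a(v)$. Each negative child $c$ contributes $(1-\alpha)^2/|a(c)|$. I would show a lower bound $|a(c)|\ge$ something, or equivalently an upper bound on these contributions, so that a vertex whose children are all positive ends up nonpositive. Here the hypothesis $\alpha\le 1/2$ enters: it gives $1-\alpha\ge\alpha$, so the off-diagonal weight dominates the degree shift. In particular, if $v$ has children all with positive value $a(c)$ bounded above by roughly $\alpha\deg(c)$, then $a(v)\le \alpha\deg(v)-d_\alpha-(1-\alpha)^2\sum 1/a(c)\le 0$.

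The main obstacle is this step (iii). One needs an a priori bound on positive values, such as $a(c)\le \alpha\deg(c)-d_\alpha+\text{const}$, that survives the recursion and still gives the sign conclusion for every $\alpha\in[0,1/2]$. The counterexamples for $\alpha>1/2$ show that the estimate must use $(1-\alpha)^2\ge\alpha^2$ sharply. I would first try to prove a lemma stating that a vertex all of whose children are positive has final value $\le 0$, using the bound $a(c)<\alpha\deg(c)$. This reduces the problem to $\alpha\deg(v)-d_\alpha-(1-\alpha)^2(\deg(v)-1)/(\alpha\max\deg)\le 0$. If that is too weak, I would fall back on an induction on $n$ that removes a pendant star (a quasi-pendant vertex together with its leaves), as in \cite{SIN2020}, and compares with the Laplacian case via interlacing.
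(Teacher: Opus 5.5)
There is a genuine gap. The local invariant behind your steps (ii) and (iii) is false, and it fails for every choice of root.

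Take $\alpha=1/2$ and the tree with a central vertex $v$ adjacent to $c_1,c_2,c_3$, each $c_i$ carrying two pendant leaves. Then $n=10$ and $d_\alpha=0.9$.
\begin{itemize}
\item Every leaf gets $\alpha-d_\alpha=-0.4$.
\item Every $c_i$ whose parent is $v$ gets $1.5-0.9-2\cdot\frac{0.25}{-0.4}=1.85$. This already exceeds $\alpha\deg(c_i)=1.5$, so the bound $a(c)<\alpha\deg(c)$ you want to propagate is false: negative children push a value up, not down.
\item The vertex $v$ ends with $0.6-3\cdot\frac{0.25}{1.85}\approx 0.19$ if it is the root, and $0.6-2\cdot\frac{0.25}{1.85}\approx 0.33$ otherwise.
\end{itemize}
So $v$ is positive and all its children are positive. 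It owns no negative child, and your lemma ``a vertex whose children are all positive ends nonpositive'' fails.

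The mechanism is general. A positive child $c$ changes its parent's value by $\alpha-(1-\alpha)^2/a(c)$, which is positive as soon as $a(c)>(1-\alpha)^2/\alpha$. The paper's formula $b_1(r)=x_1(\alpha)+r\,m(\alpha)$ with $m(\alpha)>0$ is exactly this effect for a vertex carrying $r$ pendant $P_2$'s. Such a vertex becomes positive once $r>r_0$, with all its children positive. If it is not the root, its subtree is perfectly balanced: besides itself, it has $r$ positive and $r$ negative entries. Its partner must therefore be found outside its subtree, so a child-based or descendant-based injection cannot work. The theorem still holds there, but only by a global count.

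That global count is the missing idea. In the paper, the essential use of $\alpha\le 1/2$ is Lemma~\ref{lemma-sequence}(c-3): $r_0>n/4$ for $n\ge 7$. The inequality $(1-\alpha)^2\ge\alpha^2$ is only an auxiliary tool. The argument then runs as follows.
\begin{itemize}
\item Some starlike vertex has weight at most $2\lfloor n/4\rfloor$ (Lemma~\ref{lem:reduction-starlike-vertex}). Combined with $r_0>n/4$, this keeps every star below the threshold $r_0$.
\item Below that threshold, the Star-up, Star-down and Star-star transformations are proper.
\item These transformations reduce every tree with $n\ge 7$ to the prototypes $T_0,\dots,T_3,T_{3'}$, whose root signs are computed explicitly.
\item The cases $n\le 6$ are checked by hand.
\end{itemize}

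Your fallback does not replace this as stated, for three reasons.
\begin{itemize}
\item Deleting a pendant star does not leave the $A_\alpha$-matrix of the smaller tree: the diagonal entry at the attachment vertex is off by $\alpha$.
\item The threshold $d_\alpha$ changes with $n$.
\item The natural comparison with the Laplacian case is $A_\alpha(T)\preceq A_{1/2}(T)=\tfrac12 Q(T)$ for $\alpha\le 1/2$. It only yields $\lceil n/2\rceil$ eigenvalues at most $d_{1/2}$, which is larger than $d_\alpha$ when $\alpha<1/2$.
\end{itemize}
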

	Since there are $n$ $A_\alpha$-eigenvalues bounded by $\Delta$, this is equivalent
	to
	$$
	\sigma_\alpha(T)=m_T(\alpha) \left(d_\alpha , \Delta\right]  \leq ~\left \lfloor\frac{n}{2} \right \rfloor.
	$$
	
	\bigskip
	The paper is organized as follows. In Section~\ref{sec:not} we introduce the notation used in the paper.  In Section~\ref{sec:pre} we establish the technical results and techniques need for the remaining of the work.   Section~\ref{sec:stra} explains the strategy and algorithmic tools used to reduce the general problem to a finite set of prototype trees. In Section~\ref{sec:conjecture} we present a conjecture on the $A_\alpha$-eigenvalues of a tree as a first approximation of the result in \cite{Jacobs2021}.  We also study the recurrences appearing from the eigenvalue location via diagonalization in order to determine counterexamples and finer properties of the $A_\alpha$-eigenvalues. In Section~\ref{sec:narrow}  we narrow the validity interval for the Conjecture~\ref{thm:main-conjecture} which leads to the right formulation of Theorem~\ref{thr:main}.  In Section~\ref{sec:proper}
	we present the proper transformations, which are performed on a tree $T$, in order to reduce it to a prototype tree, and prove the range of their validity. In Section~\ref{sec:prototype-trees} we introduce the prototype trees and prove that they satisfy the property in Theorem~\ref{thr:main}.  In Section~\ref{sec:reduction} we discuss the adaptation of the combinatorial reasoning from \cite{Jacobs2021} to reduce the number of starlike vertices until zero or one, which is the core argument in our strategy. Finally, in Section~\ref{sec:further} we discuss generalizations for other known families of matrices which are reparametrized as $A_\alpha$, so we can transfer our result to these families. 
	
	\section{Notation} \label{sec:not}
	
	We will adopt, to some degree, the notation from \cite{Jacobs2021}. However, for the benefit of the reader, we will recall some of it in this section.
	
	\begin{definition}
		Let $u$ be a vertex of a tree $T$ with degree $\geq 3$. Suppose that $P_q = u\,
		u_1 \ldots u_q~~ (q \geq 1)$ is a path in $T$ whose internal vertices
		$u_1,\ldots, u_{q-1}$ all have degree 2 in $T$, and where $u_q$ is a leaf.
		Then we say that $P_q$ is a pendant path of length $q$ attached at $u$.
	\end{definition}
	
	The following lemma is known from \cite{MoharLapCoeffs}.
	\begin{lemma}
		\label{lemma-starlike} Any tree that is not a path has at least one vertex
		$u$, with $\deg(u) \geq 3$, having (at least) two pendant paths.
	\end{lemma}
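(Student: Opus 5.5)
We argue by a longest-path argument. Since $T$ is not a path, it has at least one vertex of degree $\geq 3$. Consider the tree $T'$ obtained from $T$ by deleting every vertex lying on a pendant path, that is, every vertex of degree at most $2$ that can be reached from a leaf without passing through a vertex of degree $\geq 3$. We then record at each remaining vertex of degree $\geq 3$ how many pendant paths are attached there.

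The cleaner route is to choose a longest path $P = v_0 v_1 \ldots v_k$ in $T$. We then let $u$ be the vertex of $P$ of degree $\geq 3$ that is closest to $v_0$. Such a vertex exists: otherwise every vertex of $P$ has degree at most $2$, and since the endpoints of a longest path are leaves, $T$ would be the path $P$ itself. Write $u = v_j$ with $1 \le j \le k-1$.

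\textbf{Step 1: the first pendant path.} The segment $v_j v_{j-1} \ldots v_0$ is a pendant path attached at $u$. Indeed, $v_0$ is a leaf by maximality of $P$, and the internal vertices $v_1,\ldots,v_{j-1}$ have degree $2$ by the choice of $u$.

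\textbf{Step 2: the second pendant path.} Since $\deg(u)\geq 3$, the vertex $u$ has a neighbour $w \notin \{v_{j-1}, v_{j+1}\}$. Let $B$ be the branch of $T$ at $u$ containing $w$, and take any longest path in $B \cup \{u\}$ starting at $u$, say $u\, w\, w_2 \ldots w_m$. Then $m \le j$, since otherwise replacing $v_0\ldots v_{j-1}$ by this branch would lengthen $P$.

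\textbf{Step 3: the main obstacle.} We must show that the branch $B$ itself is a path, i.e.\ that it contains no vertex of degree $\geq 3$. This is where a choice of $u$ based only on $P$ fails, and we fix it by choosing $u$ more carefully.

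Among all vertices of degree $\geq 3$, we choose $u$ so that all but at most one of its branches contain no vertex of degree $\geq 3$. Such a vertex exists by the following argument. Root $T$ at any vertex and take a vertex $u$ of degree $\geq 3$ of maximal depth. Every descendant of $u$ then has degree $\le 2$, so each child subtree of $u$ is a path ending in a leaf, and each of these is a pendant path attached at $u$.

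We now count the children of $u$. If $u$ is the root, it has $\deg(u) \geq 3$ children. Otherwise it has $\deg(u)-1 \geq 2$ children. In either case $u$ carries at least two pendant paths, which proves the lemma.

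Step 3 was the only delicate point. Choosing $u$ by maximal depth makes the argument of Steps 1 and 2 unnecessary.
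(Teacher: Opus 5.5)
Your final argument is correct and complete. You root $T$ anywhere and take a vertex $u$ of degree $\geq 3$ at maximal depth. Every descendant of $u$ then has degree at most $2$, hence at most one child. So the subtree below each child of $u$ is a descending chain whose internal vertices have degree exactly $2$ and whose last vertex is a leaf, which is a pendant path in the paper's sense. Since $u$ has at least $\deg(u)-1\geq 2$ children, it has at least two pendant paths.

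The paper does not prove this lemma; it imports it from \cite{MoharLapCoeffs}. Your argument is therefore a self-contained proof rather than a variant of one in the text. It also gives slightly more than stated: at the deepest branching vertex, every branch except possibly the one towards the root is a pendant path.

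Your caution in Step~3 was justified, because the longest-path choice of $u$ really can fail. Take the path $v_0v_1\cdots v_6$, attach a leaf to $v_3$, and attach to $v_2$ a vertex $w$ carrying two leaves. Then $v_0\cdots v_6$ is a longest path and your rule gives $u=v_2$. But $v_2$ has only one pendant path, since both $w$ and $v_3$ have degree $3$.

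For a clean write-up, delete the opening deletion idea and Steps~1--2 entirely. Add the one-line justification that a tree of maximum degree at most $2$ is a path, which is what guarantees that a vertex of degree $\geq 3$ exists.
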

	
	\begin{definition}
		Let $T \neq P_n$ be a tree with $n$ vertices, and let $u$ be a vertex of
		degree at least $\ell + 1$ of $T$ having $\ell \geq 1$ pendant paths attached at
		$u$. We denote the \emph{sum} of pendant paths attached at $u$ by
		$P(u)=P_{q_1}\oplus\cdots \oplus P_{q_\ell}$. The number of edges in each path is denoted by $\sharp
		P_{q}=q$.
	\end{definition}
	
	Pendant paths of length 2 are key to our strategy.
	\begin{definition}
		A subgraph obtained by a vertex $u$ attached to $r\geq 1$ paths of length 2,
		is called a {\em sun with $r$ rays} and denoted by $S_r$.
		Hence, if a vertex $u$ of degree at least $r + 1$ has $r\geq 1$ pendant
		paths of length 2, say $P(u) = r P_2$, we will write
		$P(u)=S_r$.
	\end{definition}
	
	\begin{definition}
		Given a tree $T$, we call $P(u)=P_q \ast S_r$ a \emph{generalized pendant path at $u$}, abbreviated by {\em \gpp}, where $u$ is a vertex of degree $\geq3$ connected to $v$ by a path of length  $q \geq 0$, and $P(v) = r P_2$.  We note that $P(u)=P_q \ast S_r$ is a branch at $u$ (a maximal subtree).
	\end{definition}
	
	\begin{definition}
		We say a vertex $u$ is a \emph{starlike vertex} if it has degree $\geq 3$ and
		has at least two generalized pendant paths attached to it.
	\end{definition}
	This definition depends on the particular $(P_q, S_r)$
	representation, as well as the graph itself.
	
	\begin{remark}
		We call this a $(P_q, S_r)$ {\em representation of $T$}. We adopt the following convention:
		\[P_1  =  P_1 \ast S_0, \;  P_2 = P_0 \ast S_1 \text{ and }
		P_q  =  P_{q-2} \ast S_1  \text{ for } q \geq 3.\]
		This convention provides a particular $(P_q, S_r)$ representation
		of paths in a given tree in which $r$ is always equal to 0 or 1.
	\end{remark}
	
	\begin{definition}
		Let $T$ be a tree with $n$ vertices, $T \neq P_n$ and let $u$ be a starlike
		vertex of $T$ having $\ell \geq 1$ generalized pendant paths attached at $u$.
		According to our $(P_q,S_r)$ representation, we have  $P(u)=P_{q_1} \ast
		S_{r_1}\oplus\cdots \oplus P_{q_\ell} \ast S_{r_\ell}$. Let us call the
		\emph{weight} of $u$ and denote by $w(u)$, the sum of the number of vertices
		of the generalized pendant paths, that is
		$$w(u) =\sum_{i=1}^{\ell} \sharp (P_{q_i}\ast S_{r_i})
		=\sum_{i=1}^{\ell} (q_i+2r_i).$$
	\end{definition}

	\section{Preliminaries}\label{sec:pre}
	
	We consider the $A_{\alpha}=(1-\alpha) A+ \alpha D$ matrix, introduced by Nikiforov in \cite{nikiforov2017Aalpha} whose purpose was to merge the properties of the adjacency and Laplacian matrices of a graph (see \cite{wang2020hoffman, WangLiuBel2020} for additional spectral properties of this matrix). We recall that $A_0=A$, the adjacency matrix, $2A_{\frac{1}{2}} = Q$, the signless Laplacian, and for the last, $A_1=D$, the degree matrix. For trees, $Q$ and $L$ have the same spectrum; therefore, we recover the results from \cite{Jacobs2021} by choosing $\alpha=\frac{1}{2}$. Moreover, the interval $0\leq \alpha \leq 1/2$ will be of main interest since it connects the adjacency to the Laplacian properties. The interval $\alpha>1/2$ is expected to exhibit some anomalous behavior. 
	
	Two average degrees can be considered, by analogy with the
	Laplacian case. We recall that, for the Laplacian matrix, the average degree is  $d_n=2-\frac{2}{n}$.
	\begin{definition} We denote the average of the $A_\alpha$-eigenvalues by $d_\alpha$.
		\[
		d_{\alpha}
		:= \frac{1}{n}\operatorname{tr}(A_{\alpha}) = 
		\frac{\alpha}{n}
		\sum_{i=1}^{n} \operatorname{deg}_T(v_i),
		\]
		where $n=|T|$, the $v_i$ are the vertices of $T$, and $\operatorname{deg}_T(v_i)$ is the degree of $v_i$.
	\end{definition}
	The next lemma provides a straightforward relation between these two numbers.
	\begin{lemma}
		The following identity holds: $d_\alpha= \alpha d_n$.
	\end{lemma}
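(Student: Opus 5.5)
The identity follows directly from the definition of $d_\alpha$ as the normalized trace of $A_\alpha$, combined with the handshake lemma for trees. First I will compute the trace. Since $A_\alpha=(1-\alpha)A+\alpha D$ and the adjacency matrix $A$ of a simple graph has zero diagonal, we have
$$\operatorname{tr}(A_\alpha)=(1-\alpha)\operatorname{tr}(A)+\alpha\operatorname{tr}(D)=\alpha\sum_{i=1}^n \deg_T(v_i).$$
This recovers the second expression in the definition of $d_\alpha$.

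Next I will apply the handshake lemma. It gives $\sum_{i=1}^n \deg_T(v_i)=2|E(T)|$. Since $T$ is a tree on $n$ vertices, $|E(T)|=n-1$, so the degree sum equals $2n-2$. Therefore
$$d_\alpha=\frac{\alpha(2n-2)}{n}=\alpha\left(2-\frac{2}{n}\right)=\alpha\, d_n,$$
where $d_n=2-\frac{2}{n}$ is the Laplacian average degree recalled just before the definition. This is also the value quoted in the abstract.

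There is no genuine obstacle. The only point needing care is the degenerate case $n=1$: then $T$ has no edges, the degree sum is $0$, and both sides equal $0$, so the formula still holds.
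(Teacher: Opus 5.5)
Your proposal is correct and follows essentially the same route as the paper: linearity of the trace with $\operatorname{tr}(A)=0$, then $\operatorname{tr}(D)=2m=2(n-1)$ for a tree, giving $d_\alpha=\alpha(2-\tfrac{2}{n})=\alpha d_n$. Your separate check of the case $n=1$ is a harmless extra step that the paper does not state explicitly.
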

	\begin{proof}
		We can see that
		\begin{align*}
			d_{\alpha}
			&=\frac{1}{n}\operatorname{tr}(A_{\alpha})=\frac{1}{n}\operatorname{tr}\bigl[(1-\alpha)A+\alpha D\bigr]\\
			&=\frac{1}{n}\left[(1-\alpha)\operatorname{tr}(A)
			+\alpha\operatorname{tr}(D)\right]=\alpha\frac{1}{n}\operatorname{tr}(D)=\alpha d_n,
		\end{align*}
		where
		\[
		d_n=\frac{2m}{n}
		=\frac{2(n-1)}{n}
		=2-\frac{2}{n},
		\]
		since, for a tree, the number of edges is $m=n-1$.
		Thus, $d_{\alpha}=\alpha d_n$.
	\end{proof}
	
	From \cite[Proposition 9]{nikiforov2017Aalpha}, we know that the greatest $A_\alpha$-eigenvalue is bounded above by the maximum vertex degree $\Delta$ and the smallest  $A_\alpha$-eigenvalue is bounded above by $\alpha\delta$, where $\delta$ is  the minimum vertex degree, for every $\alpha\in[0,1]$.

	We now establish the main property in this work.
	\begin{definition}
		Let $T$ be a tree and $\lambda\in\operatorname{Spec}(A_{\alpha}(T))$. We say that $\lambda$
		is \emph{small} if
		$\lambda\leq d_{\alpha}$. Otherwise, we say that an $A_\alpha$-eigenvalue is
		\emph{large} if it is greater than $d_\alpha$.\\
		We also define
		\[
		m_T(\alpha)
		\bigl(-\infty,d_\alpha\bigr]
		=
		\sharp
		\left\{
		\lambda\in\operatorname{Spec}(A_\alpha(T))
		: \lambda\leq d_\alpha
		\right\},
		\]
		and
		\[
		\sigma_\alpha(T) 
		=
		\sharp
		\left\{
		\lambda\in\operatorname{Spec}(A_\alpha(T))
		:d_\alpha< \lambda \leq \Delta
		\right\},
		\]
		Following the notation of \cite{das2016distribution}, let
		\[
		\rho_{A_\alpha}(T)=\lambda_1(\alpha)
		\geq\lambda_2(\alpha)\geq\cdots\geq\lambda_n(\alpha).
		\]
		Then, $$
		m_T(\alpha)\bigl(-\infty,d_{\alpha}\bigr]
		+
		\sigma_\alpha(T)
		=n.$$
	\end{definition}
	
	\begin{remark}
		We may have
		$
		d_{\alpha}\in\operatorname{Spec}(A_{\alpha})$. For instance, if $\alpha=0$ then $d_\alpha=0 \in \operatorname{Spec}(A_{0})$ for $T=K_{1,n}, n \geq 2$. This provides an example (see Section~\ref{sec:narrow},  Example~\ref{ex:counter-example-individual} and   Proposition~\ref{prop:counterexamples} for nontrivial cases).
		For the Laplacian matrix, when $n\geq3$, one has
		$d_n \not\in\operatorname{Spec}(L)$, since $d_n=2-2/n$ is rational but not an integer and every Laplacian eigenvalue is an algebraic integer.
	\end{remark}

	\section{Strategy and algorithmic tools}\label{sec:stra}
	
	Our maintool is the notion of proper transformation.
	\begin{definition}
		A \emph{proper transformation} is defined  as a local operation on a tree $T$
		which produces a new tree $T^\prime$, with the same number of vertices, that
		does not decrease the number of $A_{\alpha}$-eigenvalues above the average degree, that is
		$$\sigma_\alpha(T) \leq \sigma_\alpha(T^\prime).$$ 
	\end{definition}
	We mainly use proper transformations for $n \geq7$; however, trees of smaller size will be  checked by direct computation.
	
	Our strategy to prove that, for any tree $T$ of order $n$, $\sigma_\alpha(T) \leq
	\lfloor\frac{n}{2}\rfloor$ is to apply successive transformations to $T$ 
	producing a prototype tree $T^\prime$. From the fact that we use proper
	transformations, and that $\sigma_\alpha(T^\prime) \leq \lfloor\frac{n}{2}\rfloor$,
	it follows that $\sigma_\alpha(T) \leq \sigma_\alpha(T^\prime)\leq \lfloor\frac{n}{2}\rfloor$, proving
	Theorem~\ref{thr:main}.

	Everything starts with the initialization procedure \texttt{InitiateRepresentation$(T)$} that puts the tree $T$ into a $(P_q, S_r)$ representation. It is formally described by the pseudocode of Figure~\ref{fig:init-rep}.
	\begin{figure}[ht!]
		\centering
		{\small
			{\tt
				\begin{tabbing}
					aaa\=aaa\=aaa\=aaa\=aaa\=aaa\=aaa\=aaa\= \kill
					
					\> \texttt{InitiateRepresentation}($T$)\\
					
					\> {\bf input}: a tree $T$ with $n$ vertices.\\
					
					\> {\bf output}: a $(P_q,S_r)$ representation of $T$.\\
					
					\> \\
					
					\> \> Identify all $j$ pendant paths
					$P_{q_1},\ldots,P_{q_j}$ of $T$.\\
					
					\> \> {\bf for} $i$ {\bf from} $1$ {\bf to} $j$ {\bf do}.\\
					
					\> \> \> {\bf if} $q_i=1$ {\bf then}
					replace $P_{q_i}$ with \gpp\; $P_1\ast S_0$;\\
					
					\> \> \> {\bf if} $q_i=2$ {\bf then}
					replace $P_{q_i}$ with \gpp\; $P_0\ast S_1$;\\
					
					\> \> \> {\bf if} $q_i\geq3$ {\bf then}
					replace $P_{q_i}$ with \gpp\; $P_{q_i-2}\ast S_1$;\\
					
					\> \> {\bf end do}\\
					
					\> {\bf return} $T$.
					
				\end{tabbing}
			}
		}
		\caption{Initiating the representation of a tree $T$}
		\label{fig:init-rep}
	\end{figure}

	We then describe a high-level algorithm \texttt{Transform} to perform the transformation sequence, shown in Figure~\ref{fig:transform}.
	
	\begin{figure}[ht!]
		\centering
		{\small {\tt
				\begin{tabbing}
					aaa\=aaa\=aaa\=aaa\=aaa\=aaa\=aaa\=aaa\= \kill
					
					\> Algorithm \texttt{Transform}$(T)$ \\
					
					\> Input: a tree $T$ with $n\geq 7$ vertices.\\
					
					\> Output: a tree $T^\prime$ with $n$ vertices and
					$\sigma_\alpha(T) \leq \sigma_\alpha(T^\prime)$ \\
					
					\> \\
					
					\> \> initialize: $T$:=InitiateRepresentation$(T)$.\\
					
					\> \> order starlike vertices of $T$ as
					$u_1,\ldots,u_k$ by weight,
					$w(u_1)\leq\cdots\leq w(u_k)$. \\
					
					\> \> {\bf while} $k\geq 2$ {\bf do}\\
					
					\> \> loop invariant:
					$w(u_1)\leq 2\left\lfloor\frac{n}{4}\right\rfloor$  \\
					
					\> \> \> $T$ := ReduceStarVertex$(T,u_1)$. \\
					
					\> \> \> order starlike vertices of the updated $T$ as
					$u_1,\ldots,u_k$ by weight,
					$w(u_1)\leq\cdots\leq w(u_k)$. \\
					
					\> \> {\bf end loop} \\
					
					\> \> {\bf return} $T$ \\
					
				\end{tabbing}
		}}
		\caption{Transformation of a tree $T$}
		\label{fig:transform}
	\end{figure}
	The next step inside the algorithm \texttt{Transform} is the identification and ordering of all $k$ starlike vertices
	of $T$. Recall that the weight of a starlike vertex $u$ is the total number of vertices hanging at $u$. The main parameter of our transformation algorithm is the number $k$ of starlike vertices and their weights. The next step is the procedure \texttt{ReduceStarVertex} $(T,u_1)$. It takes the tree $T$ and its starlike vertex of minimum weight $u_1$ as arguments, and properly transforms the generalized pendant paths at $u_1$ into a single generalized pendant path. 
	\begin{center}
		\begin{figure}[ht!]
			{\small {\tt
					\begin{tabbing}
						aaa\=aaa\=aaa\=aaa\=aaa\=aaa\=aaa\=aaa\= \kill
						\> \texttt{ReduceStarVertex}($T,u$)\\
						\> {\bf input}: a tree $T$ with $n$ vertices \\
						\> \> \> ~ a starlike vertex $u$ with $P(u)= P_{q_1}\ast S_{r_1} \oplus \cdots \oplus P_{q_\ell}\ast S_{r_\ell}$. \\
						\> \> \> ~ precondition $w(u) \leq 2 \lfloor \frac{n}{4} \rfloor$   \\
						\> {\bf output}: a tree $T^\prime$ where the {\gpps} at $u$ are replaced by a single \gpp. \\
						\> \> Let $w=w(u)$. \\
						\> \> Compute $q'=\sum_i q_i\pmod 2$. \\
						\> \> Compute $r'=\frac{w-q'}{2}$.      \\
						\> \> Replace in $T$ all {\gpps} at $u$ with $P(u) = P_{q'}\ast S_{r'}$, forming $T'$.\\
						\> \> {\bf if}  $\deg_{T'} (u) = 2$ {\bf then} \\
						\> \> \>  find $v$, the nearest vertex to $u$ having $\deg_{T'}(v) > 2$ \\
						\> \> \>  temporarily remove $u$ \\
						\> \> \>  create {\gpp} $P(v)=P_{q^{\prime\prime}}\ast(P_{q'}\ast S_{r'})=P_{q^{\prime\prime}+q'}\ast S_{r'}$, where $d(u,v)=q^{\prime\prime}$ \\
						\>{\bf return} $T'$.
				\end{tabbing}}
				\caption{Procedure \texttt{ReduceStarVertex}.}\label{red-star}}
		\end{figure}
	\end{center}
	Moreover, we will see that this does not increase the number of starlike vertices, but increases the minimum weight over time. This guarantees termination. The goal is that, after it stops, we end with no starlike vertices or exactly one starlike vertex.
	
	We now review our main technical tool for proving the conjecture. It is the algorithm
	reproduced in Figure \ref{alg-lap}. For any tree $T$ with $n$ vertices, it
	produces a diagonal matrix $D$ that is congruent to the matrix $A_\alpha-xI_n$.
	
	This algorithm, presented first in \cite{fritscher}, is the Laplacian matrix
	version of the adjacency matrix algorithm \cite{JT2011} that has been useful
	in many applications of spectral graph theory (see, for example, the recent
	ordering by the spectral radius \cite{oliveira2018,belardo} of certain trees). For the general case, we refer to \cite{braga2013distribution,livro} and references therein.
	\begin{figure}[ht!]
		\centering
		{\small
			{\tt
				\begin{tabbing}
					aaa\=aaa\=aaa\=aaa\=aaa\=aaa\=aaa\=aaa\= \kill
					
					\> The vertices are ordered so that every child precedes its parent. \\
					\> Algorithm $\operatorname{Diagonalize}(A_\alpha(T),-x)$ \\
					
					\> \> initialize $d(v_i):=\alpha\deg_T(v_i)-x$ for all vertices $v_i$ \\
					
					\> \> {\bf for} $k=1$ to $n$ \\
					
					\> \> \> {\bf if} $v_k$ is a leaf {\bf then} continue \\
					
					\> \> \> {\bf else if} $d(v_i)\neq0$ for all children $v_i \in C_k$ {\bf then} \\
					
					\> \> \> \> $d(v_k):=d(v_k)-\displaystyle\sum_{v_i\in C_k}\frac{(1-\alpha)^2}{d(v_i)}$ \\
					
					\> \> \> {\bf else} \\
					
					\> \> \> \> select one child $v_j$ of $v_k$ for which
					$d(v_j)=0$ \\
					
					\> \> \> \> $d(v_k):=-\frac{(1-\alpha)^2}{2}$ \\
					
					\> \> \> \> $d(v_j):=2$ \\
					
					\> \> \> \> if $v_k$ has a parent $v_l$, remove the edge
					$v_kv_l$. \\
					
					\> \> {\bf end loop}
					
				\end{tabbing}
			}
		}
		\caption{Diagonalization of $A_\alpha-xI_n$}\label{alg-lap}
	\end{figure}
	
	\begin{lemma}\cite[Theorem 3]{JT2011}\label{lemma-eigen}
		The number of $A_\alpha$-eigenvalues of $T$ less than, equal to, or greater than \(x\) is exactly the number of negative, zero, or positive diagonal elements produced by
		\(\operatorname{Diagonalize}(A_\alpha(T),-x)\).
	\end{lemma}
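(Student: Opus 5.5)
The plan is to show that $\operatorname{Diagonalize}(A_\alpha(T),-x)$ carries out a sequence of simultaneous row and column operations on $M=A_\alpha(T)-xI_n$. Such operations are congruences $M\mapsto P M P^{T}$ with $P$ invertible, and they end in a diagonal matrix whose entries are the final values $d(v_i)$. Once this is established, Sylvester's law of inertia gives the statement. The inertia of $M$ counts the eigenvalues of $A_\alpha$ that are less than, equal to, or greater than $x$. It equals the number of negative, zero and positive diagonal entries of the congruent diagonal matrix. If $\alpha=1$ there is nothing to prove, since $M=D-xI$ is already diagonal and the algorithm does nothing. So I assume $\alpha<1$ and write $c=1-\alpha\neq 0$ for the common off-diagonal entry of $M$ on each edge.

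Throughout, the vertices are processed in an order in which children precede parents. I will prove by induction on $k$ the following invariant. After step $k$, the current matrix is congruent to $M$. Every processed vertex $v_i$ ($i\le k$) has no nonzero off-diagonal entries in its row and column, except possibly the single entry $c$ linking it to its parent. Its diagonal entry is the current value $d(v_i)$. All entries among unprocessed vertices are untouched. When the algorithm removes the edge $v_kv_l$, the invariant says instead that $v_k$ has no link to its parent at all.

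There are three cases.
\begin{itemize}
\item \emph{Leaves.} For a leaf the invariant holds trivially.
\item \emph{All children nonzero.} Let $v_k$ be a vertex all of whose children $v_i\in C_k$ have $d(v_i)\neq 0$. By the invariant, the row of each such child has nonzero off-diagonal entries only at $v_k$. For each child, I subtract $\frac{c}{d(v_i)}$ times row $v_i$ from row $v_k$, and do the same with the columns. This kills the entries $(v_i,v_k)$ and $(v_k,v_i)$ and changes only the diagonal entry at $v_k$, by $-c^2/d(v_i)$. This is exactly the update performed by the algorithm.
\item \emph{Some child zero.} Suppose some child $v_j$ has $d(v_j)=0$. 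Then row $v_j$ has exactly one nonzero entry, namely $c$ in column $v_k$. Using multiples of row and column $v_j$, I clear every other entry of row and column $v_k$. This includes the links to the other children, to the parent $v_l$, and the diagonal entry at $v_k$. No other entries change, because row $v_j$ has its single nonzero entry at $v_k$. What remains is an isolated $2\times2$ block $\begin{pmatrix}0&c\\ c&0\end{pmatrix}$ on $\{v_j,v_k\}$. This block has determinant $-c^2<0$ and so has inertia $(1,1)$. It is therefore congruent to $\operatorname{diag}(2,-c^2/2)$, for instance via an explicit $2\times 2$ transformation. The parent's row has lost its link to $v_k$ and its diagonal entry is unchanged, which matches the removal of the edge $v_kv_l$.
\end{itemize}
In every case the invariant is restored. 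After the root has been processed, the matrix is diagonal with entries $d(v_i)$.

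I expect the main obstacle to be the bookkeeping in the zero-child case. One must verify that clearing row $v_k$ by means of $v_j$ leaves the other, already processed children with their correct diagonal values. They keep these values because their rows had no other nonzero entries. One must also check that it leaves the parent's later computation valid. A further point in that case is that the $2\times2$ block must be replaced by a genuine congruence rather than a similarity; it suffices to match the inertia $(1,1)$. With this done, the conclusion follows directly from Sylvester's law of inertia.
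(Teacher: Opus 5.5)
Your argument is correct for $0\le\alpha<1$ and is the same route as the source: the paper gives no proof of its own and only cites \cite[Theorem 3]{JT2011}, whose proof is exactly this one. There the algorithm is shown to realize a congruence $P(A_\alpha(T)-xI_n)P^{T}$ ending in a diagonal matrix, and Sylvester's law of inertia finishes. Your invariant and the zero-child bookkeeping are sound.

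The one wrong step is your dismissal of $\alpha=1$. The algorithm does not ``do nothing'' there. Whenever some child has $d(v_j)=0$, the third branch still fires and sets $d(v_j):=2$ and $d(v_k):=-(1-\alpha)^2/2=0$. With $c=0$ the block on $\{v_j,v_k\}$ is $\operatorname{Diag}(0,d(v_k))$, which is congruent to $\operatorname{Diag}(2,0)$ only when $d(v_k)>0$. So this step is not a congruence, and the statement genuinely fails.

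Here is a concrete counterexample. Let $v$ be a vertex of degree $3$ adjacent to two leaves and to a vertex $r$ of degree $2$ whose other neighbour is a leaf. Root the tree at $r$ and take $\alpha=1$, $x=3$. The output is $-2$ at each of the three leaves, $2$ at $v$, and $0$ at $r$, so it has one positive entry. But $D$ has spectrum $\{3,2,1,1,1\}$, so no eigenvalue exceeds $3$.

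Hence the lemma needs the hypothesis $\alpha<1$, or else the zero-child branch must be suppressed when $c=0$. The restriction is harmless for the paper, which only applies the lemma for $0<\alpha<1$. Either way, the case $\alpha=1$ should not be waved through as trivial.
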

	\begin{remark}
		For our purposes, we take $x=d_\alpha$. Moreover, as we will often work with different trees, we adopt the following convention. We denote by $\deg_T(v)$ the degree of a vertex $v$ in the tree $T$. Also, the output of $\operatorname{Diagonalize}(A_\alpha(T),-x)$ at $v$ will be denoted by $a_T(v)$. This allows us to track a fixed vertex when a part of a tree $T$ is changed to $T'$, by some transformation allowing us to compare $\operatorname{Diagonalize}(A_\alpha(T),-x)$ and $\operatorname{Diagonalize}(A_\alpha(T'),-x)$.
	\end{remark}

	\section{\texorpdfstring{A conjecture on the $A_\alpha$-eigenvalues of a tree}{A conjecture on the A-alpha-eigenvalues of a tree}} \label{sec:conjecture}
	We now seek a first approximation of the result in \cite{Jacobs2021} for the  $A_\alpha$ matrix of a tree.

	\begin{conjecture}\label{thm:main-conjecture}
		Let $T$ be a tree such that $|T|=n \geq 1$. Then,
		\[ m_T(\alpha)\bigl(-\infty,d_{\alpha}\bigr]
		\geq \left\lceil \frac{n}{2} \right\rceil,
		\qquad
		\forall\,\alpha\in[0,1].
		\] 
	\end{conjecture}
	
	To avoid division by zero, we first study the validity of Conjecture~\ref{thm:main-conjecture} for the extremes $\alpha=0$ and $\alpha=1$. 
	
	\textbf{Case $\alpha=0$.}
	For $\alpha=0$, we have $A_0=A$. Since the spectrum of a tree is
	symmetric with respect to the origin, if $\lambda$ is an eigenvalue,
	then $-\lambda$ is also an eigenvalue. Therefore, there are at least $\left\lceil\frac{n}{2}\right\rceil$ eigenvalues less than or equal to $d_0=0$. Consequently,
	\[
	m_T(\alpha)\bigl(-\infty,d_0\bigr]
	\geq \left\lceil\frac{n}{2}\right\rceil.
	\]
	\textbf{Case $\alpha=1$.}
	We claim that the conjecture is false for $\alpha=1$. Indeed, in this
	case, $A_1=D$, where $D$ denotes the degree matrix of $T$.
	
	For example, for $T=P_5$, 
	$A_1=\operatorname{Diag}(1,2,2,2,1)$, $\operatorname{Spec}(A_1)=\{1^2,2^3\}$ and  $d_1=2-\frac{2}{5}=\frac{8}{5}<2$.
	In this case 
	\[m_T(\alpha)\bigl(-\infty, d_1\bigr] = 
	m_T(\alpha)\bigl(-\infty,d_1\bigr]=2<\left\lceil\frac{5}{2}\right\rceil=3.
	\]
	This proves the claim.
	
	\begin{remark}
		Since the conjecture is always true for $\alpha=0$ and false in general for $\alpha=1$, we may assume from now on that $0<\alpha<1$, aiming to prove the conjecture in this interval.
		
	\end{remark}
	Following our strategy, we study the location of $d_\alpha$ for a particular substructure that plays a fundamental role, the generalized pendant paths.
	
	Recall the explicit expression for $A_\alpha$: 
	\[
	A_\alpha(T) =
	\begin{pmatrix}
		\alpha\deg_T(v_1) & (1-\alpha) a_{12} & \cdots & (1-\alpha) a_{1n}\\
		(1-\alpha) a_{21} & \alpha\deg_T(v_2) & \cdots & (1-\alpha) a_{2n}\\
		\vdots & \vdots & \ddots & \vdots\\
		(1-\alpha) a_{n1} & (1-\alpha) a_{n2} & \cdots & \alpha\deg_T(v_n)
	\end{pmatrix}.
	\]
	We aim to locate the value $d_\alpha$ in the spectrum of $A_\alpha$ using the \texttt{Diagonalize} algorithm to a $gpp$ given by $P_q\ast S_r$. The initialization step of the algorithm is illustrated in Figure~\ref{fig:diagAalfa}. 
	\begin{figure}[ht!]
		\centering    \includegraphics[width=0.5\linewidth]{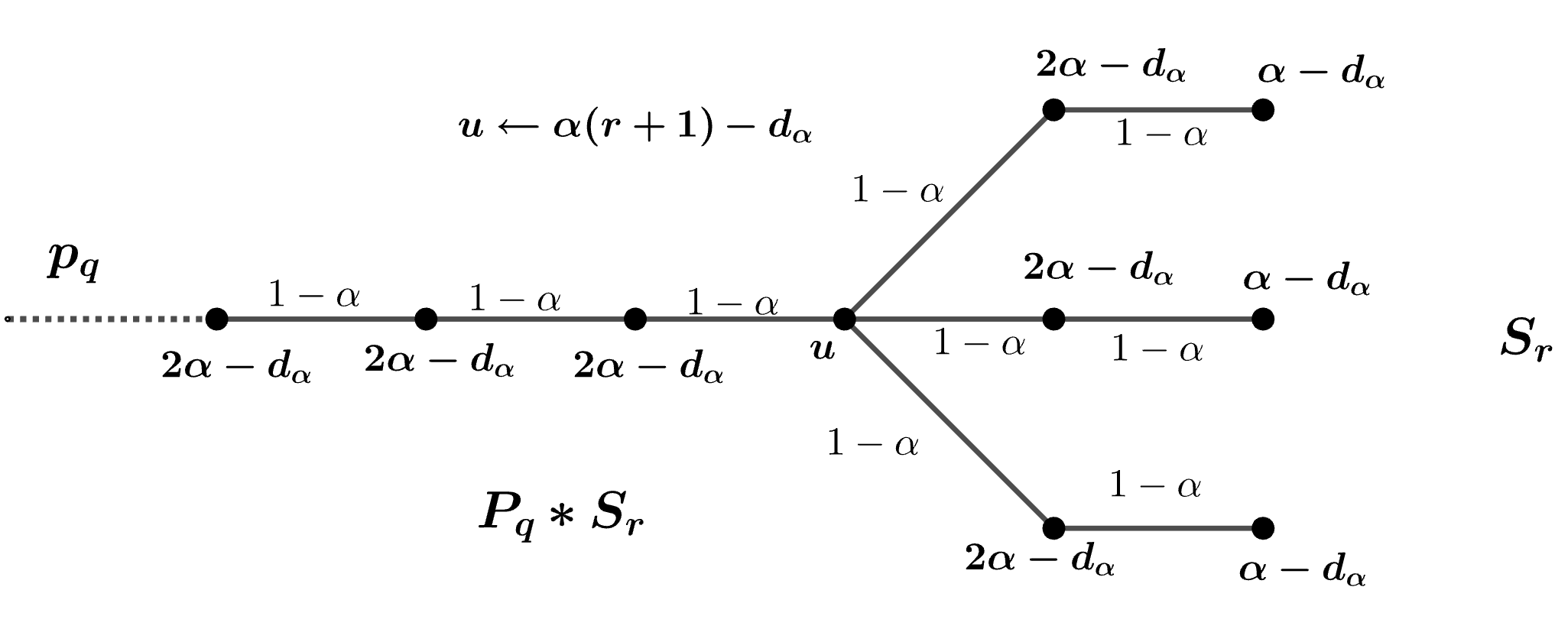}    \caption{$\operatorname{Diag}\bigl(A_\alpha(T),-d_\alpha \bigr)$}
		\label{fig:diagAalfa}
	\end{figure}
	We follow the process of the \texttt{Diagonalize} algorithm, which is shown in Figure~\ref{fig:diagproc}.
	\begin{figure}[ht!]
		\centering    \includegraphics[width=0.4\linewidth]{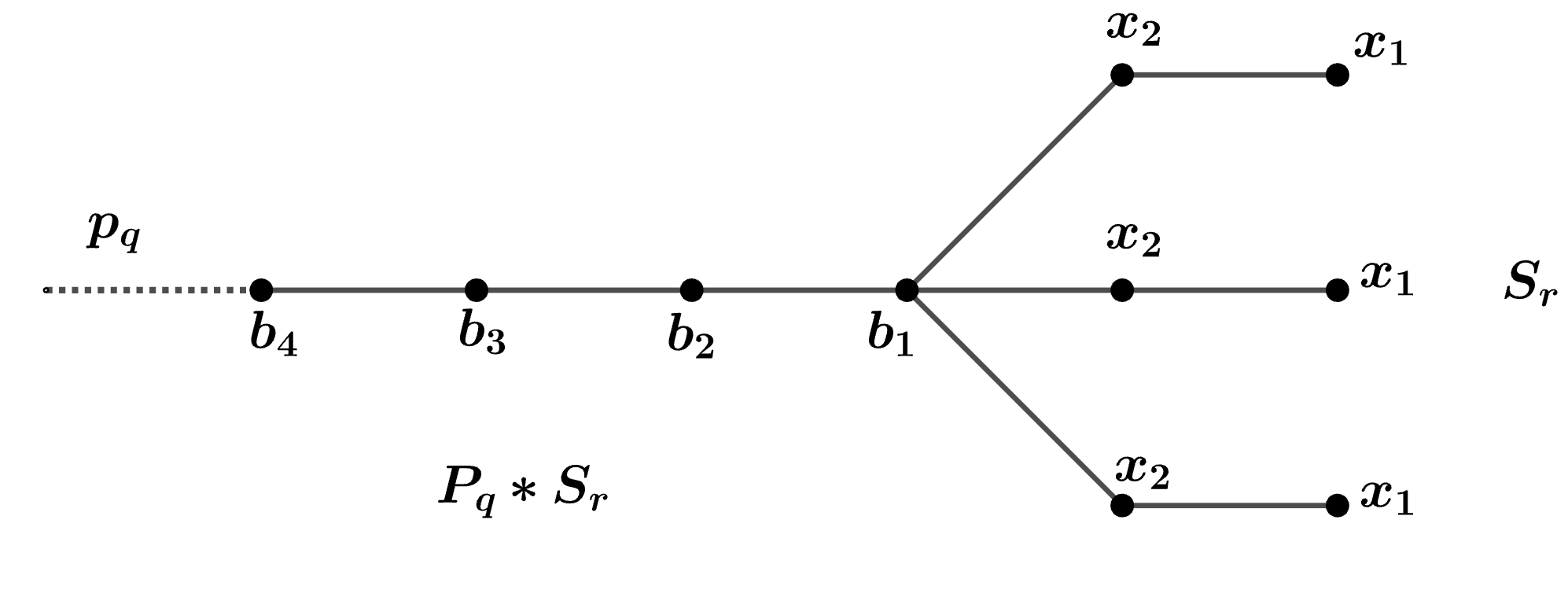}
		\caption{Step-by-step execution of the \texttt{diagonalize} algorithm.}
		\label{fig:diagproc}
	\end{figure}
	
	Applying the algorithm  for the $A_\alpha$ matrix, to locate $x = d_\alpha$, we obtain, at each extremal vertex of the pendant path $P_2$, the value $x_1(\alpha)$.
	
	For $\alpha > 0$ and $d_\alpha=\alpha(2-\frac{2}{n})$, we have
	\[
	x_1(\alpha) =\alpha-d_\alpha = \alpha\left(-1 + \frac{2}{n}\right) < 0
	\]
	and, if $x_1(\alpha)\neq 0$, then the next value processed is
	\[
	x_2(\alpha) = 2\alpha - d_\alpha - \frac{(1-\alpha)^2}{x_1(\alpha)} = \frac{2\alpha}{n} - \frac{(1-\alpha)^2}{x_1(\alpha)}.
	\]
	For completeness, consider the recurrence relation
	\begin{equation}
		\left\{
		\begin{aligned}
			x_1(\alpha) &= \alpha\big(-1 + \dfrac{2}{n}\big), \\[4pt]
			x_{j+1}(\alpha) &= \dfrac{2\alpha}{n} - \dfrac{(1-\alpha)^2}{x_j(\alpha)},0<\alpha<1,
		\end{aligned}
		\right.
		\label{eq:recorrencia}
	\end{equation}
	provided that $x_j(\alpha)\neq0$ for all $j$.

	Starting from these values, we continue by processing the vertices of the path $P_q$ shown in Fig.~\ref{fig:diagproc}, obtaining at $u$
	\[
	\begin{aligned}
		b_1(r) &= \alpha(r + 1) - d_\alpha - \frac{r(1-\alpha)^2}{x_2(\alpha)} = x_1(\alpha) + r\left(\alpha - \frac{(1-\alpha)^2}{x_2(\alpha)}\right).
	\end{aligned}
	\]
	and the rest of the values on the path are given by the recursion
	\begin{equation}
		\left\{
		\begin{aligned}
			b_1(r) & = x_1(\alpha) + r\left(\alpha - \frac{(1-\alpha)^2}{x_2(\alpha)}\right), \\
			b_{j+1}(r) & = \frac{2\alpha}{n} - \frac{(1-\alpha)^2}{b_j(r)},
		\end{aligned}
		\right.
		\label{eq:sistema_b}
	\end{equation}
	provided that $b_j(r) \neq 0, \forall j$. If $x_j(\alpha)=0$ or $b_j(r)=0$ then $d_\alpha$ could be a root of the characteristic polynomial of $A_\alpha$. See \cite{OliveTrevAppDiff} for a study of these recurrences and applications to spectral graph theory.
	\begin{remark}
		Since the sequence $b_j$ depends on $r$, we write
		$b_j=b_j(r)$ (we suppress the dependence on $\alpha$ in the notation) for $r\geq 0$. For $r=0$, we have $b_1(0)=x_1(\alpha)$, and, since $(b_j)$ satisfies the same recurrence as $(x_j)$, it follows that $b_j(0)=x_{j}(\alpha)$ for all $j\geq 1$. In particular, $b_1(1)=x_3(\alpha)$. 
	\end{remark}  
	The main properties of the sequences $(x_j(\alpha))_{j \in\mathbb{N}}$ and
	$(b_j(r))_{j \in\mathbb{N}}$ are summarized in the following lemma.
	\begin{lemma}\label{lemma-sequence} Let $T$ be a tree with $n\geq3$ vertices and $0<\alpha<1$.
		Consider the above defined sequences $x_j(\alpha)$ and $b_j(r)$.
		Then the following properties hold:
		\begin{enumerate}
			\item[(a)] If $n\geq3$, then $-\alpha<x_1(\alpha)\leq-\frac{\alpha}{3}$ and $x_2(\alpha)>\frac{(1-\alpha)^2}{\alpha}>0$.
			
			\item[(b)] If $n\geq3$, then the map $b_1(r): \mathbb{N}_0 \to [x_1(\alpha),\infty)$ given by  $r\mapsto b_1(r):= r\, m(\alpha) + x_1(\alpha)$ is linear and strictly
			increasing, where $m:=m(\alpha) =\alpha-\frac{(1-\alpha)^2}{x_2}>0$;    
			
			\item[(c)] Consider $n \geq 3$. Let $r_0>0$ be the unique positive root of $b_1(r)$ and let $r_1>r_0$ be the unique value such that $b_1(r)= \frac{(1-\alpha)^2}{2\alpha}n$. Then,
			\begin{enumerate}
				\item[(1)] The function $(\alpha,n) \mapsto r_0(\alpha,n)$ is strictly decreasing for $0<\alpha<1$ and $n\geq3$. Moreover,
				\[\lim_{\alpha \to 0^+} r_0(\alpha,n) =\frac{n-2}{2} \text{ and } \lim_{\alpha \to 1^-} r_0(\alpha,n) =1-\frac{2}{n}. \]
				In particular, 
				\[1-\frac{2}{n} < r_0(\alpha,n) < \frac{n-2}{2}.\]
				\item[(2)] If 
				\[\alpha:=\alpha_{n,k} =  \left( 1+\sqrt {{\frac { \left( 2n-4 \right)  \left( kn-n+2
							\right) }{{n}^{2} \left( n-2-2\,k \right) }}} \right) ^{-1}, \quad 1 \le k \le \left\lfloor \frac{n-3}{2} \right\rfloor.\]
				then $r_0=k \in \mathbb{N}$ and $b_2(k)$ is not defined;
				\item[(3)] If $n\geq7$ and $\alpha\leq1/2$, then $r_0>\frac{n}{4}$, otherwise, if $\alpha > 1/2$ then there exists $N:=N(\alpha) \in \mathbb{N}$ such that, $r_0 < \frac{n}{4}$ for $n \geq N(\alpha)$;
				\item[(4)] If  $0 \leq r<r_0$ then $b_1(r)<0$ and $b_2(r)>0$, and
				\item[(5)] If $r_0<r<r_1$ then $b_1(r)>0$ and $b_2(r)<0$.
			\end{enumerate}
		\end{enumerate} 
	\end{lemma}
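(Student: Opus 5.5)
The plan is to make everything explicit. Put $c:=(1-\alpha)^2>0$ and $t:=c/\alpha^2=\bigl((1-\alpha)/\alpha\bigr)^2$, so that $t$ is a strictly decreasing bijection from $(0,1)$ onto $(0,\infty)$, with $t\ge 1$ exactly when $\alpha\le 1/2$. I would compute $x_1,x_2,m$ and $r_0$ as rational functions of $n$ and $t$, and read off each item from the closed forms.

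For (a), write $x_1=-\alpha\frac{n-2}{n}$. Since $n\ge 3$ gives $\frac{n-2}{n}\in[\frac13,1)$, we get $-\alpha<x_1\le-\alpha/3$. Then
\[
x_2=\frac{2\alpha}{n}+\frac{c\,n}{\alpha(n-2)}>\frac{c}{\alpha}>0,
\]
because $n/(n-2)>1$ and $2\alpha/n>0$.

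For (b), the bound $x_2>c/\alpha$ gives $c/x_2<\alpha$, so $m=\alpha-c/x_2>0$. Hence $b_1(r)=x_1+rm$ is affine and strictly increasing, starting at $b_1(0)=x_1$.

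For (c), the central computation is a closed form for $r_0=-x_1/m$. Putting $x_2$ over the common denominator $\alpha n(n-2)$ gives
\[
m=\frac{\alpha\bigl(2\alpha^2(n-2)+2cn\bigr)}{2\alpha^2(n-2)+cn^2},
\qquad
r_0=\frac{(n-2)\bigl(2(n-2)+tn^2\bigr)}{2n\bigl(n-2+tn\bigr)}.
\]
This algebra is the step I expect to be the main obstacle, as every later item depends on it, so I would double-check it by symbolic expansion.

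With the formula in hand, (1) follows by differentiating in $t$. The numerator of the derivative is
\[
n^2(n-2+tn)-n\bigl(2(n-2)+tn^2\bigr)=n(n-2)^2>0,
\]
so $r_0$ is strictly increasing in $t$, hence strictly decreasing in $\alpha$. The limits $t\to\infty$ (that is, $\alpha\to0^+$) and $t\to0$ (that is, $\alpha\to1^-$) give $\frac{n-2}{2}$ and $\frac{n-2}{n}=1-\frac2n$, and strict monotonicity yields the strict two-sided bound. The dependence on $n$ I would treat separately by the same quotient-rule computation in $n$ for fixed $t$; I would check carefully which direction of monotonicity in $n$ is actually true before asserting it.

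For (2), I would solve $r_0=k$ for $t$. This equation is linear in $t$ and gives
\[
t=\frac{2(n-2)(kn-n+2)}{n^2(n-2-2k)}.
\]
This is positive precisely when $1\le k\le\lfloor\frac{n-3}{2}\rfloor$, and then $\alpha=(1+\sqrt t)^{-1}$ is the stated $\alpha_{n,k}$. At this value $b_1(k)=0$, so the recurrence for $b_2(k)$ divides by zero and $b_2(k)$ is undefined.

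For (3), take first $\alpha\le1/2$, so $t\ge1$. By the monotonicity in (1), $r_0\ge \frac{(n-2)(n^2+2n-4)}{2n(2n-2)}$. The inequality $r_0>n/4$ then reduces to $2n^3-16n+16>2n^3-2n^2$, that is, $n^2-8n+8>0$, which holds for all $n\ge7$. Now take $\alpha>1/2$, so $t<1$ is fixed. Then $r_0/n\to \frac{t}{2(1+t)}<\frac14$ as $n\to\infty$, which produces the required $N(\alpha)$.

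Finally, (4) and (5) follow from (b) together with $b_2=\frac{2\alpha}{n}-\frac{c}{b_1}$. For $0\le r<r_0$ we have $b_1<0$, so $b_2>0$. For $r_0<r<r_1$ we have $0<b_1<\frac{cn}{2\alpha}$, so $\frac{c}{b_1}>\frac{2\alpha}{n}$ and therefore $b_2<0$.
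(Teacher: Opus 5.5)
Your proposal is correct and follows essentially the paper's route: an explicit rational formula for $r_0$ (your variable $t=((1-\alpha)/\alpha)^2$ is the square of the paper's substitution $\alpha=1/(1+t)$, which makes $r_0$ linear-fractional in $t$), a derivative-sign computation for monotonicity in $\alpha$, solving $r_0=k$ for (2), and the bound $r_0>n/4$ via the value at $\alpha=1/2$, which reduces to $n^2-8n+8>0$ exactly as in the paper; your limit $r_0/n\to t/(2(1+t))$ for $\alpha>1/2$ is a cleaner replacement for the paper's explicit threshold $\alpha(n)$ and its plot. On your caveat about $n$: the paper proves only monotonicity in $\alpha$, and in fact $r_0$ is strictly increasing in $n$ for fixed $\alpha$ (the two limits $\frac{n-2}{2}$ and $1-\frac{2}{n}$ already grow with $n$), so the literal ``decreasing in $(\alpha,n)$'' cannot be established and you should claim only the $\alpha$-monotonicity, which is all that is used later.
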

	\begin{proof}
		\begin{enumerate}
			\item[a)]  We have $x_1(\alpha)=\alpha \left(-1+\frac{2}{n}\right)$, as $0<\frac{2}{n}\leq\frac{2}{3}$, so $-1<-1+\frac{2}{n}\leq-\frac{1}{3}$.
			Then, $-\alpha<x_1(\alpha)\leq-\frac{\alpha}{3}$ and $x_2(\alpha) = \frac{2\alpha}{n} - \frac{(1-\alpha)^2}{x_1}$ thus  $\quad x_2(\alpha) > 0$ because $\quad x_1(\alpha) < 0$. As  $ x_1>-\alpha$ one obtains $ -x_1<\alpha $ and consequently $\frac{-1}{x_1}>\frac{1}{\alpha}$ thus $x_2(\alpha) = \frac{2\alpha}{n} - \frac{(1-\alpha)^2}{x_1}>\frac{(1-\alpha)^2}{\alpha}$.
			Therefore, $x_2(\alpha)>\frac{(1-\alpha)^2}{\alpha}$.

			\item[b)] We know by item (a) that $x_2(\alpha)>0$, thus the number $b_1$, given by $Diagonalize(A_\alpha,-d_\alpha)$ is well-defined. Thus, we can compute
			\begin{align*}
				b_1(r)=a_T(u)=\alpha(1+r)-d_\alpha- \frac{r(1-\alpha)^2}{x_2(\alpha)}
				&= x_1(\alpha)+r\left( \alpha-\frac{(1-\alpha)^2}{x_2(\alpha)}\right)
			\end{align*}
			Denoting $m(\alpha)=\alpha-\frac{(1-\alpha)^2}{x_2(\alpha)}$, one obtains $b_1(r)=mr+x_1(\alpha)$.
			That is, the correspondence $r\mapsto b_1(r)$ is linear with respect to $r$, we note that from item (a)
			$x_2(\alpha)>\frac{(1-\alpha)^2}{\alpha}$ implies that $m>0$, therefore $b_1(r)$ is strictly increasing with range $\left[x_1(\alpha), \infty  \right)$, because $b_1(0)=x_1(\alpha)$.
			
			\item[c)] We now prove  each:
			\begin{enumerate}
				\item[1)]We recall from  item (b) that $b_1(r)$ has a unique positive root $r_0$ because it is increasing and $b_1(0)=x_1(\alpha)<0$. In this way $b_1(r_0)=0$ if and only if $ mr_0+x_1(\alpha)=0$ if and only if $  r_0(\alpha,n):=\frac{-x_1(\alpha)}{m(\alpha)}$ therefore
				\[r_0(\alpha,n)= {\frac { \left( n-2 \right)  \left(  \left( {n}^{2}+2\,n-4
						\right) {\alpha}^{2}-2\,\alpha\,{n}^{2}+{n}^{2} \right) }{2\,n \left( 
						\left( 2\,n-2 \right) {\alpha}^{2}-2\,\alpha\,n+n \right) }}.
				\]
				Thus, it is enough to observe that 
				\[\frac{\partial}{\partial \alpha} r_0(\alpha,n) = {\frac {\alpha\, \left(  \left( {n}^{3}-6\,{n}^{2}+12\,n-8 \right) 
						\alpha-{n}^{3}+6\,{n}^{2}-12\,n+8 \right) }{ \left(  \left( 2\,n-2
						\right) {\alpha}^{2}-2\,\alpha\,n+n \right) ^{2}}}=
				\] \[={\frac {\alpha\,
						\left( n-2 \right) ^{3} \left( \alpha-1 \right) }{ \left(  \left( 2\,
						n-2 \right) {\alpha}^{2}-2\,\alpha\,n+n \right) ^{2}}} <0
				\]
				and the limits $\lim_{\alpha \to 0^+} r_0(\alpha,n) =\frac{n-2}{2} \text{ and } \lim_{\alpha \to 1^-} r_0(\alpha,n) =1-\frac{2}{n}$ can be checked directly from the formula for $r_0$.
				\item[2)] We want to find when $r_0(\alpha,n)=k \in \mathbb{N}$. From the previous item, it is equivalent to $1-\frac{2}{n} < k < \frac{n-2}{2}$, that is, $1 \leq k \leq \lfloor\frac{n-3}{2}\rfloor$.
				
				Performing a change of variables $\alpha=\frac{1}{1+t}$ for $t \in (-1, +\infty)$ at $r_0$ we obtain 
				\[\tilde r_0(t,n):=r_0\left(\frac{1}{1+t},n\right)= {\frac { \left( {n}^{2}{t}^{2}+2\,n-4 \right)  \left( n-2
						\right) }{ 2 n \left( n{t}^{2}+n-2 \right)}}.
				\]
				Solving $\tilde r_0(t,n)=k$ with respect to $t$ yields $t=\sqrt {{\frac { 2\left( n-2 \right)  \left( kn-n+2 \right) }{{
								n}^{2} \left( n-2-2\,k \right) }}}.$
				Substituting that in $\alpha$ we have
				\[\alpha_{n,k}:= \left( 1+\sqrt {{\frac { \left( 2n-4 \right)  \left( kn-n+2
							\right) }{{n}^{2} \left( n-2-2\,k \right) }}} \right) ^{-1}.
				\]
				Thus, $r_0=k \in \mathbb{N}$ and $b_2(r)$ is not defined under the Diagonalize algorithm.
				\item[3)] We claim that $r_0>\frac{n}{4}$, for $n\geq 7$.
				\begin{figure}[ht!]
					\centering
					\includegraphics[width=0.3\linewidth]{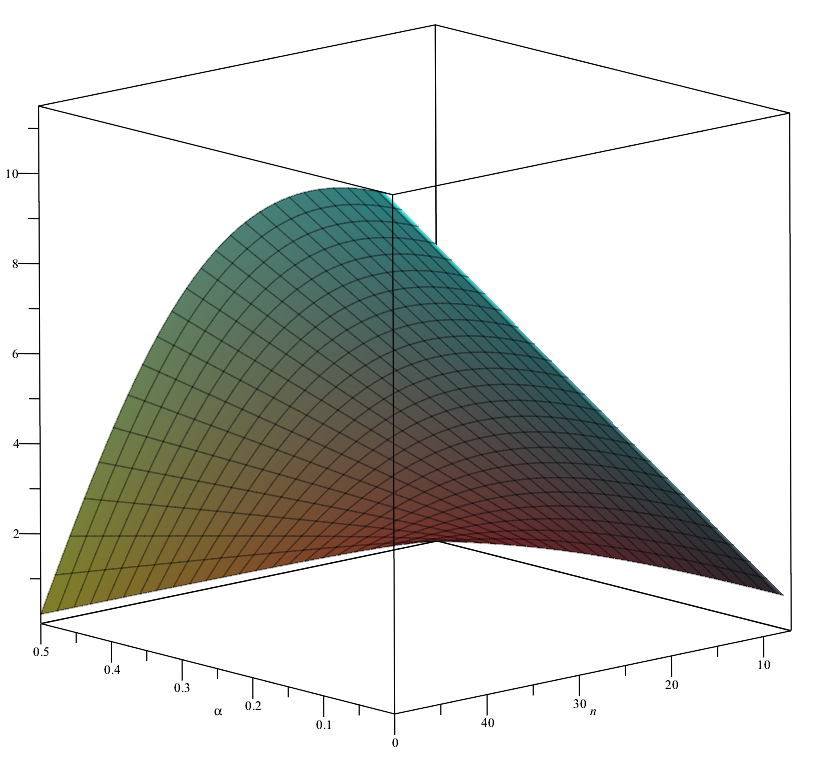}
					\caption{Plot of $r_0-\frac{n}{4}$}
					\label{fig:r0minusn4}
				\end{figure}
				
				Indeed, it is equivalent to show that $g(\alpha,n):=r_0(\alpha,n) - n/4>0$ (see Figure~\ref{fig:r0minusn4}). Note that
				$\frac{\partial}{\partial \alpha} g(\alpha,n) = \frac{\partial}{\partial \alpha} r_0(\alpha,n) <0.$ 
				On the other hand
				\[g(1/2, n)= \frac {{n}^{2}-8\,n+8}{4\,n \left( n-1 \right) }>0 \leftrightarrow n > 4+2\sqrt{2}\approx 6.82.\]
				Thus, for any $\alpha \in(0,1/2]$, $g(\alpha,n)>g(1/2,n)>0$. This proves the first claim. 
				
				For the second claim, we note that, for a fixed $n$,  $g(1/2, n)>0$ and $\frac{\partial}{\partial \alpha} r_0(\alpha,n) <0$ so that $r_0(\alpha,n) - n/4>0$ for a small interval $[1/2, 1/2+\varepsilon)$. However, we can find a value $\alpha(n)> 1/2$ such that $r_0(\alpha,n) - n/4=0$ and became negative after. Solving this equation with respect to $\alpha$ we have
				\[\alpha(n)={\frac { \left( {n}^{2}-4\,n -\sqrt {\left( n-2 \right)  \left( n-4 \right)  \left( {n}^{2}-4\,n+8 \right)}\right) n}{2{n}^{2}-16\,n+16}}.
				\]
				\begin{figure}[ht!]
					\centering
					\includegraphics[width=0.45\linewidth]{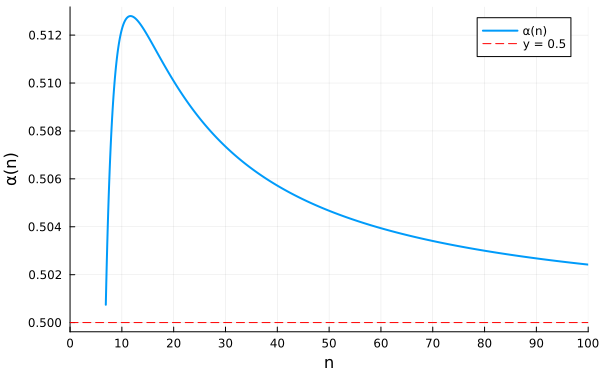}
					\caption{Plot of $\alpha(n)$ }
					\label{fig:r0minusn4not}
				\end{figure}
				One can check that $\lim_{n \to \infty} \alpha(n) = 1/2$, decreasingly (see Figure~\ref{fig:r0minusn4not}). Thus, for any $\alpha > 1/2$  there will be a number $N:=N(\alpha) \in \mathbb{N}$ such that, $r_0 < \frac{n}{4} $ for $n \geq N(\alpha)$.

				\item[4)]$0\leq r<r_0$ then $b_1(r)<0$ and $b_2(r)>0$
				
				This is an immediate consequence of the equations
				\[
				b_1(r) = mr + x_1(\alpha) \quad \text{and} \quad b_2(r) = \frac{2\alpha}{n} - \frac{(1-\alpha)^2}{b_1(r)}.
				\]
				Since $b_1(r)$ is strictly increasing, it is negative on the left of its root $r_0$. Thus, for $r < r_0$, we have $-b_1(r) > 0$, which implies $b_2(r) > 0$.
				
				\item[5)] If $r_0 < r < r_1$, then $b_1(r) > 0$ and $b_2(r) < 0$. To prove this, we note that the first inequality is trivial because $r$ lies on the right of the root $r_0$ of $b_1(r)$, implying that $b_1(r) > 0$. On the other hand, observe that
				\[b_2(r) < 0 \iff
				\frac{2\alpha}{n} - \frac{(1-\alpha)^2}{b_1(r)} < 0 \iff b_1(r) < \frac{(1-\alpha)^2 n}{2\alpha}.\]
				Therefore, if we define $r_1$ as the unique value such that $b_1(r_1) = \frac{(1-\alpha)^2 n}{2\alpha}$, this value is well-defined because $b_1(r)$ is strictly increasing and $r_0 < r_1$.
				
				Consequently, for $r_0 < r < r_1$, it follows that $b_2(r) < 0$. 
			\end{enumerate}
		\end{enumerate}
		This completes the proof.
	\end{proof} 
	
	We now resume the proof of Theorem~\ref{thr:main} by looking at small values of $n$, that is, whether the Conjecture~\ref{thm:main-conjecture} holds for $n \leq 6$. The next result is a direct consequence of Lemma~\ref{lemma-sequence} and the Diagonalization algorithm.
	We note that even for small values, the conjecture could be false if $\alpha>1/2$, see Table~\ref{table_P_5}.
	{\tiny \begin{table}[htbp]
			\centering
			\caption{Data table derived from the spectrum of $A_\alpha(P_5)$. The eigenvalues less than or equal to $d_\alpha$ are in bold.}
			\vspace{0.5em}
			\begin{tabular}{ccccccc}
				\toprule
				$\mathbf{\alpha}$ & $\text{d}_\alpha$ & $\mathbf{\lambda_5}$ & $\mathbf{\lambda_4}$ & $\mathbf{\lambda_3}$ & $\mathbf{\lambda_2}$ & $\mathbf{\lambda_1}$ \\
				\midrule
				0.0100 & 0.0160 & \textbf{-1.6964} & \textbf{-0.9750} & \textbf{0.0133} & 1.0050 & 1.7331 \\
				0.1000 & 0.1600 & \textbf{-1.3763} & \textbf{-0.7514} & \textbf{0.1334} & 1.0514 & 1.7430 \\
				0.2500 & 0.4000 & \textbf{-0.8475} & \textbf{-0.3853} & \textbf{0.3347} & 1.1353 & 1.7628 \\
				0.5000 & 0.8000 & \textbf{0.0000}  & \textbf{0.1910}  & \textbf{0.6910} & 1.3090 & 1.8090 \\
				0.7500 & 1.2000 & \textbf{0.6596}  & \textbf{0.6743}  & 1.2081 & 1.5757 & 1.8822 \\
				0.9000 & 1.4400 & \textbf{0.8888}  & \textbf{0.8890}  & 1.6650 & 1.8110 & 1.9463 \\
				0.9900 & 1.5840 & \textbf{0.9899}  & \textbf{0.9899}  & 1.9659 & 1.9801 & 1.9942 \\
				\bottomrule
			\end{tabular} \label{table_P_5}
	\end{table}}
	
	\begin{corollary}\label{cor:conjecture_small_n}
		Let $T$ be a tree with $|T|=n\leq6$ and $0<\alpha\leq\frac12$. Then the Conjecture~\ref{thm:main-conjecture} holds.
	\end{corollary}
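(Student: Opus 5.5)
For $n\le 6$ there are only finitely many trees: $P_1,P_2,P_3$, then $P_4$ and $K_{1,3}$ for $n=4$, three trees for $n=5$ and six trees for $n=6$, so eleven trees with $n\ge4$. I would check each tree directly, uniformly in $\alpha\in(0,\tfrac12]$. The tool is Lemma~\ref{lemma-eigen} with $x=d_\alpha$: run $\operatorname{Diagonalize}(A_\alpha(T),-d_\alpha)$ and count the non-positive diagonal entries, which must be at least $\lceil n/2\rceil$. The cases $n=1,2$ are immediate. For $n=1$ the spectrum is $\{0\}$ and $d_\alpha=0$. For $n=2$ the eigenvalues are $\alpha\pm(1-\alpha)$ and $d_\alpha=\alpha$, so one of them is small.

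\textbf{Leaves and stars.} Root each tree at a convenient vertex. Every leaf gets the value $x_1(\alpha)=\alpha(-1+2/n)<0$, and by Lemma~\ref{lemma-sequence}(a) its parent in a pendant $P_2$ gets $x_2(\alpha)>0$. More generally, a vertex of degree $s+1$ with $s$ leaf children gets
\[
\alpha(s+1)-d_\alpha-\frac{s(1-\alpha)^2}{x_1}>0 .
\]
So the leaves already give many negative entries. For stars $K_{1,n-1}$ the count is $n-1\ge\lceil n/2\rceil$ at once. For the other trees the leaves give at least $2$ negative entries, and I need at most one or two more, located at the vertices processed next.

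\textbf{Trees with pendant $P_2$'s.} Here the extra negative entry comes from Lemma~\ref{lemma-sequence}(c). A vertex $u$ carrying $r$ pendant $P_2$'s and no other children has value $b_1(r)$, and $r_0>1-2/n$ gives $b_1(r)<0$ for $r<r_0$. For paths, $b_j(0)=x_j$, so I need the sign pattern of $x_1,x_2,x_3,\dots$ up to $j\le6$. For $\alpha\le\tfrac12$ I would show that the signs of $x_j$ alternate as long as $|x_j|$ stays below the threshold $(1-\alpha)^2n/(2\alpha)$ from item (5). That gives $\lceil n/2\rceil$ negative entries on $P_n$ when $P_n$ is rooted at an end. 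For $P_n$ with $n$ odd I would instead root at the centre, so that the two symmetric arms each contribute $\lfloor n/2\rfloor/2$ negative entries. Then I only need the sign at the centre, or else that at least $\lceil n/2\rceil$ entries are non-positive.

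\textbf{Main obstacle.} The hard part is that the sign of some root value genuinely changes with $\alpha$; the case $\alpha=\tfrac12$ for $P_5$ in Table~\ref{table_P_5} gives $\lambda_3<d_\alpha$ only barely. At such vertices I would not track the recursion. Instead I would write the final diagonal entry as $\det(A_\alpha-d_\alpha I)$ divided by the product of the other entries, which is a rational function of $\alpha$ for fixed $n$. I would then check its sign on $(0,\tfrac12]$ by factoring the numerator polynomial, or equivalently check where $d_\alpha$ falls in the ordered spectrum. I expect at most one vertex per tree to need this, and at $\alpha=\tfrac12$ it is the Laplacian result of \cite{Jacobs2021}. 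Any zero entries, which arise only at isolated values $\alpha_{n,k}$, count in our favour, since they correspond to the eigenvalue $d_\alpha$ itself, which is counted as small.
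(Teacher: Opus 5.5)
Your method is the paper's: run $\operatorname{Diagonalize}(A_\alpha(T),-d_\alpha)$ on each tree and count the non-positive entries. However, the one concrete mechanism you commit to for paths is false. Along $P_6$ rooted at an end, the signs of $x_j$ do not alternate. Write $s=(1-\alpha)^2$ and $n=6$. Then
\[
x_4=\frac{81s^2+27\alpha^2s-2\alpha^4}{3\alpha(9s-2\alpha^2)},
\]
and $x_4<\frac{ns}{2\alpha}=\frac{3s}{\alpha}$ is equivalent to $45s<2\alpha^2$, which fails on all of $(0,\tfrac12]$. Hence $x_5>0$; at $\alpha=\tfrac12$ one has $x_4\approx 2.52>1.5$ and $x_5\approx 0.068$. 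So only two of $x_1,\dots,x_5$ are negative. The third negative entry comes from the end-root entry $x_1-(1-\alpha)^2/x_5<0$, not from alternation. The cleaner fix is the paper's: root $P_6$ at its third vertex, so that the arms carry $x_1,x_2$ and $x_1,x_2,x_3$. Then you only need
\[
x_3=-\frac{\alpha(7\alpha^2-18\alpha+9)}{3(11\alpha^2-18\alpha+9)}<0 .
\]
Relatedly, your appeal to Lemma~\ref{lemma-sequence}(c) is a non sequitur. The bound $r_0>1-\frac2n$ does not give $b_1(1)=x_3<0$. For that you need $r_0(\alpha,n)\ge r_0(\tfrac12,n)=\frac{(n-2)(n^2+2n-4)}{4n(n-1)}>1$, which holds for $n=5,6$ but fails for $n=4$.

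You also make the case analysis heavier than it is. Every tree on $n\le 6$ vertices other than $P_5$ and $P_6$ has at least $\lceil n/2\rceil$ leaves, so the leaf entries $x_1<0$ settle all of them; this is all the paper uses there. Only $P_5$ and $P_6$ need one extra negative entry. For $P_5$, your plan of rooting at the centre and computing the centre entry explicitly is exactly right:
\[
\frac{2\alpha}{5}-\frac{30\alpha(1-\alpha)^2}{31\alpha^2-50\alpha+25}
=-\frac{4\alpha(22\alpha^2-50\alpha+25)}{5(31\alpha^2-50\alpha+25)}<0
\quad\text{for }\alpha<\tfrac{25-5\sqrt3}{22}\approx 0.743 .
\]
This gives three negative entries. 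Do this computation yourself rather than relying on the paper. The positive root value displayed in the paper's proof equals $2\alpha-\frac{30\alpha(1-\alpha)^2}{31\alpha^2-50\alpha+25}$, so it omits the $-d_\alpha$ term. A positive centre entry would leave only two negative entries.
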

	\begin{proof}
		We must examine each case individually.
		\begin{enumerate}
			\item If $n=1$, then there is only one non-isomorphic tree, namely
			$T=K_1=P_1$.  Thus, $0$ is the only eigenvalue of $A_\alpha(T)$. Therefore,
			\[
			m_T(\alpha)(-\infty,d_\alpha]
			= \left\lceil\frac{n}{2}\right\rceil=1,
			\]
			and the conjecture holds for $n=1$. 
			
			\item If $n=2$, then there is only one non-isomorphic tree on two
			vertices, namely $T=K_2=P_2$. Applying
			$\operatorname{Diagonalize}$, we obtain
			$x_1(\alpha)=0$ and we are not able to use Lemma~\ref{lemma-sequence}~(a). However, the algorithm produces $2$ and $-\frac{ (1-\alpha)^2}{2}$. Therefore, the conjecture holds, since $\lceil n/2\rceil=1$.
			
			\item If $n=3$, then there is only one non-isomorphic tree formed by
			three vertices, namely $T=P_3$.  We choose the internal vertex as the root, then we apply Diagonalize obtaining the value $x_1(\alpha)<0$ at each leaf, with two negative entries (see Equation~\ref{eq:recorrencia} and Lemma~\ref{lemma-sequence} (a)). The conjecture holds, since $\lceil n/2\rceil=2$.
			
			\item If $n=4$, then we have only two non-isomorphic trees formed by $T=P_4$ and $T=K_{1,3}$.  For $T=P_4$, we choose any internal vertex as the root, then we apply Diagonalize obtaining the value $x_1(\alpha)<0$ at each leaf, with two negative entries (see Equation~\ref{eq:recorrencia} and Lemma~\ref{lemma-sequence} (a)).  Analogously, if $T=K_{1,3}$ we choose as root the internal vertex and apply Diagonalize obtaining the value $x_1(\alpha)<0$ at each leaf, with three negative entries (see Equation~\ref{eq:recorrencia} and Lemma~\ref{lemma-sequence} (a)). In both cases, the conjecture holds, since $\lceil n/2\rceil=2$.
			
			\item If $n=5$, then there are three non-isomorphic trees, namely
			$T=P_5$, $T=K_{1,4}$, and the starlike tree $T=[1,1,2]$.
			
			For $T=P_5$, we choose the internal vertex $u$ as the root and
			apply $\operatorname{Diagonalize}$. At each leaf, we obtain
			$x_2(\alpha)>0$, obtaining two positive entries
			(see Equation~\ref{eq:recorrencia} and
			Lemma~\ref{lemma-sequence}~(a)). We then process the root vertex,
			obtaining
			\[
			a_T(u)=
			\frac{\alpha(32\alpha^2-40\alpha+20)}
			{31\alpha^2-50\alpha+25}>0.
			\]
			Similarly, for the starlike tree $T=[1,1,2]$, we choose the degree three
			vertex as the root and apply $\operatorname{Diagonalize}$. At each
			leaf, we obtain $x_1(\alpha)<0$, obtaining three negative entries
			(see Equation~\ref{eq:recorrencia} and
			Lemma~\ref{lemma-sequence}~(a)). Similarly, for $T=K_{1,4}$, we choose an arbitrary internal vertex as the root and
			apply $\operatorname{Diagonalize}$. At each leaf, we obtain
			$x_1(\alpha)<0$, obtaining four negative entries (see Equation~\ref{eq:recorrencia} and
			Lemma~\ref{lemma-sequence}~(a)).
			In all cases considered above, the conjecture holds, since
			$\lceil n/2\rceil=3$.

			\item If $n=6$ then there are six non-isomorphic trees, as shown in Figure~\ref{fig:treesn6}:
			\begin{figure}[ht!]
				\centering            \includegraphics[width=0.6\linewidth]{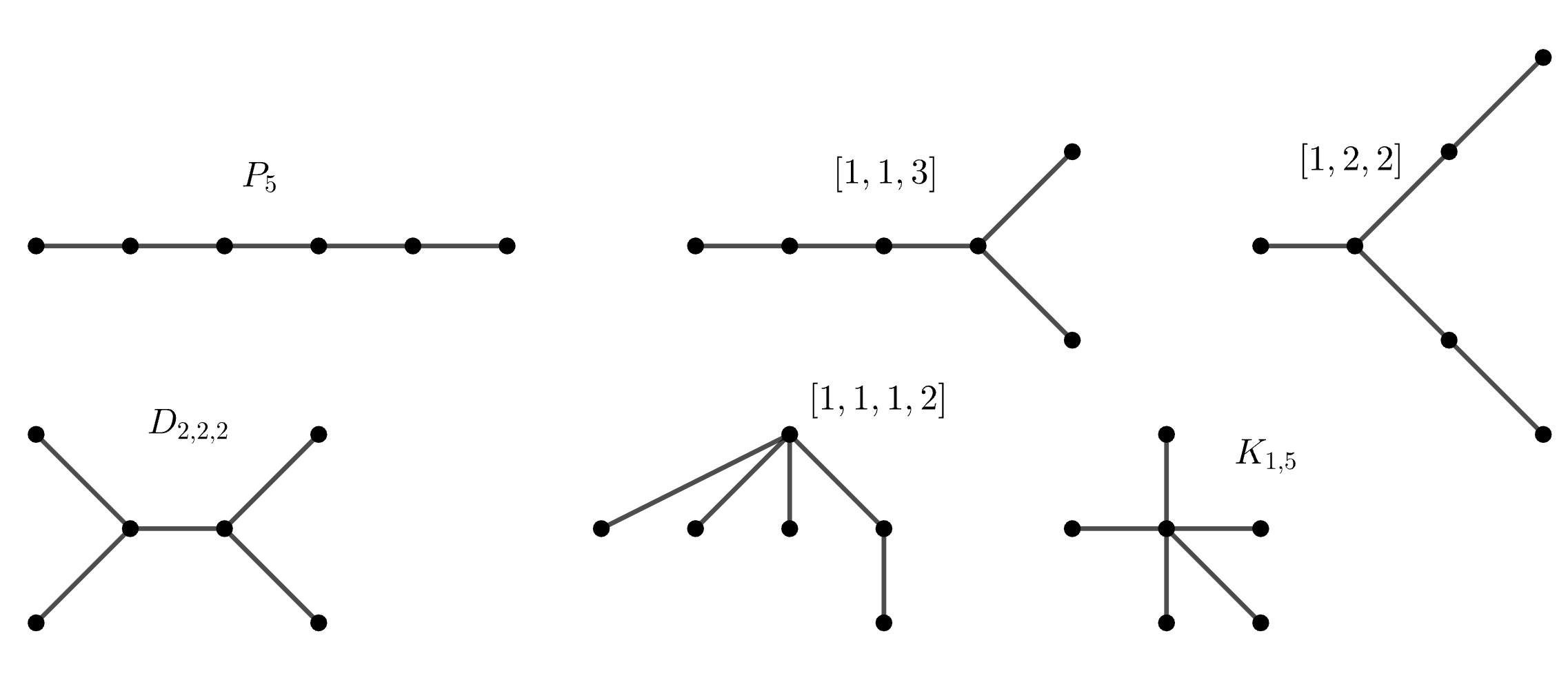}
				\caption{The six non-isomorphic trees on six vertices: the path graph $P_6$, the star $K_{1,5}$, the double-broom tree $D_{2,2,2}$, and the three starlike trees with branch lengths $[1,1,3]$, $[1,2,2]$, and $[1,1,1,2]$.}
				\label{fig:treesn6}
			\end{figure}  
			For $T=P_6$, we choose a central internal vertex as the root and
			apply $\operatorname{Diagonalize}$. In each leaf, we obtain
			$x_1(\alpha)<0$, obtaining two negative entries.  Following the process, we obtain $x_2(\alpha)>0$ counting two positive entries and 
			\[
			x_3(\alpha)
			=2\alpha-d_\alpha-\frac{(1-\alpha)^2}{x_2}
			=\frac{-\alpha(7\alpha^2-18\alpha+9)}
			{3(11\alpha^2-18\alpha+9)}<0.
			\]
			Indeed, if $0<\alpha\leq\frac12$, then
			\[
			7\alpha^2-18\alpha+9
			=7\alpha^2+9(1-2\alpha)>0.
			\]
			Since $-\alpha<0$ and
			$3(11\alpha^2-18\alpha+9)>0$, it follows that
			$x_3(\alpha)<0$.
			For the starlike tree $K_{1,5}$, we choose the central vertex as the root and apply $\operatorname{Diagonalize}$. In each leaf, we obtain
			$x_1(\alpha)<0$, obtaining five negative entries. Analogously, for $T=[1,1,3]$, $T=[1,2,2]$, and $T=[1,1,1,2]$, we choose the root and apply $\operatorname{Diagonalize}$. At each leaf, the algorithm yields $x_1(\alpha)<0$. Thus, we obtain at least three negative entries for each of the first two trees and at least four negative entries for the third. 
			Finally, for  $D_{2,2,2}$ we choose any internal vertex as the root, then we apply Diagonalize obtaining the value $x_1(\alpha)<0$ at each leaf, counting four negative entries. In all cases considered above, the conjecture holds, since $\lceil n/2\rceil=3$.
			
		\end{enumerate}
	\end{proof}

	\section{Narrowing down the interval for the conjecture}\label{sec:narrow}
	
	Before we proceed to prove the Conjecture~\ref{thm:main-conjecture} we provide some examples to narrow down the range of $\alpha$. More precisely, inspired by Lemma~\ref{lemma-sequence} item (c-3) and Corollary~\ref{cor:conjecture_small_n}, we conclude that the conjecture generally fails for $\alpha>1/2$.
	
	\begin{example}\label{ex:counter-example-individual}
		Consider $\alpha:={\frac {193819}{60367}}-{\frac {606\,\sqrt {70433}}{60367}} \approx 0.546512984 > 1/2$. For $n=101$, one obtains $r_0:=21 \in \mathbb{N}$. Now consider $T:=u \oplus P_{1} \ast S_{24} \oplus P_{1} \ast S_{25}$. By construction $b_1(24)>0$ and $b_1(25)>0$. Therefore, even if $a_T(u)<0$ we must have $\sigma_\alpha(T) \geq \lceil\frac{n}{2}\rceil$, that is, it does not satisfy the conjecture.
	\end{example}
	
	The next example shows sequences of values of $\alpha$ arbitrarily close to $\alpha=1/2$ and $\alpha=1$ where the Conjecture~\ref{thm:main-conjecture} fails. We also provide nontrivial cases where $d_{\alpha} \in \operatorname{Spec}(A_{\alpha})$. Therefore, the interval $0\leq \alpha \leq 1/2$ is sharp for the validity of Theorem~\ref{thr:main}.
	\begin{proposition} \label{prop:counterexamples}
		Consider the following families of trees:
		\begin{enumerate}
			\item If $T_n= u + P_{1} \ast S_{k} \oplus P_{1} \ast S_{k+1}$, $k:=\left\lfloor \frac{n}{4} \right\rfloor -1$ (or equivalently $n=4k+5$) and
			\[\alpha_{n}:=\alpha_{n,k}= \left( 1+\sqrt {{\frac { \left( 2n-4 \right)  \left( kn-n+2
						\right) }{{n}^{2} \left( n-2-2\,k \right) }}} \right) ^{-1}.
			\]
			Then $\sigma_\alpha(T_n)=2k+3 > \lfloor n/2\rfloor$ and $\displaystyle\lim_{n\to \infty} \alpha_{n} =1/2$.
			\item If $T_j= u + P_{1} \ast S_{2} \oplus j (P_{1} \ast S_{4})$, $n=6+9j$, $j\geq3$, and
			\[\alpha_{j}:=\alpha_{n,3}= \left( 1+\sqrt {{\frac { \left( 2n-4 \right)  \left( 2n+2
						\right) }{{n}^{2} \left( n-8 \right) }}} \right) ^{-1}.
			\]
			Then $\sigma_\alpha(T_j)\geq 2+5 j > \lfloor n/2\rfloor$ and $\displaystyle \lim_{j\to \infty} \alpha_{j} =1$.

			\item If $T_k=u+k(P_{1}\ast S_{2})$, $k\geq2$, $n=5k+1$, and
			\[\alpha_{n,2}= \left( 1+\sqrt {{\frac { \left( 2n-4 \right)  \left( n+2
						\right) }{{n}^{2} \left( n-6 \right) }}} \right) ^{-1},
			\] then $d_{\alpha_{n,2}} \in \operatorname{Spec}(A_{\alpha_{n,2}})$ with multiplicity $k-1$ and $\displaystyle \lim_{n\to \infty} \alpha_{n,2} =1$.
		\end{enumerate}
	\end{proposition}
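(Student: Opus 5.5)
The plan is to treat each of the three families with the algorithm $\operatorname{Diagonalize}(A_\alpha(T),-d_\alpha)$, rooted at the central vertex $u$, and to read off the signs from Lemma~\ref{lemma-sequence} and Lemma~\ref{lemma-eigen}. The choice $\alpha=\alpha_{n,k}$ in Lemma~\ref{lemma-sequence}(c-2) forces $r_0=k$, so every branch $P_1\ast S_r$ can be classified by comparing $r$ with $k$.

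\textbf{Family (1).} The vertex count is $n=1+(2k+1)+(2k+3)=4k+5$. Let $v_1,v_2$ be the neighbours of $u$, carrying $S_k$ and $S_{k+1}$ respectively. Each carries $r$ rays and has degree $r+1$, which is exactly the setting of $b_1(r)$.
\begin{itemize}
\item In the $S_k$ branch, the algorithm produces $k$ values $x_1<0$ and $k$ values $x_2>0$ (Lemma~\ref{lemma-sequence}(a)). Then $b_1(k)=b_1(r_0)=0$, so the zero rule fires: one $x_2$ is replaced by $2$, $v_1$ receives $-(1-\alpha)^2/2$, and the edge $uv_1$ is cut.
\item In the $S_{k+1}$ branch, the algorithm produces $k+1$ negative and $k+1$ positive values. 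Moreover $b_1(k+1)=m(\alpha)>0$ by linearity, since $b_1(k)=0$.
\end{itemize}
This already gives $2k+2$ positive entries. The last entry is
\[
a_T(u)=\frac{2\alpha}{n}-\frac{(1-\alpha)^2}{m(\alpha)},
\]
whose sign is decided by comparing $k+1$ with $r_1$, as in Lemma~\ref{lemma-sequence}(c-5).

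I expect this sign to be the main obstacle, and I do not expect it to come out as the statement needs. Reaching the claimed count $2k+3$ requires $a_T(u)>0$, that is, $m(\alpha)>n(1-\alpha)^2/(2\alpha)$. Since $m<\alpha$ and $n$ grows, this looks doubtful. My first step would therefore be to compute $a_T(u)$ exactly with $\alpha=\alpha_{n,k}$ substituted, and to check small cases numerically, for example $n=9,13$. If $a_T(u)<0$, the count is only $\sigma_\alpha(T_n)=2k+2=\lfloor n/2\rfloor$. In that case the family would have to be modified, for instance by shifting both branches to $S_{k+1},S_{k+2}$ as in Example~\ref{ex:counter-example-individual}, so that both $b_1$ values are positive.

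The limit is routine. With $k\sim n/4$, the radicand behaves like
\[
\frac{2n\cdot n^2/4}{n^2\cdot n/2}\to 1,
\]
so $\alpha_n\to 1/2$.

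\textbf{Family (2).} Here $n=1+5+9j=6+9j$, $r_0=3$, and $n\ge 33$, so $3\le\lfloor (n-3)/2\rfloor$ and $\alpha_{n,3}$ is admissible.
\begin{itemize}
\item The $S_2$ branch has $2<r_0$. It gives two positive values $x_2$, and by Lemma~\ref{lemma-sequence}(c-4) it also gives $b_1(2)<0$.
\item Each of the $j$ branches $S_4$ has $4>r_0$. It gives four positive values $x_2$, and $b_1(4)>0$.
\end{itemize}
No zero occurs below $u$, so the positive entries number at least $2+5j$. This exceeds $\lfloor(6+9j)/2\rfloor=\lfloor 3+4.5j\rfloor$ exactly when $j\ge 3$. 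For the limit, the radicand is $\sim 4n^2/n^3\to0$, hence $\alpha_j\to1$.

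\textbf{Family (3).} Here $r_0=2$, so every branch root $v_i$ hits $b_1(2)=0$, and the algorithm cuts all the edges $uv_i$. Lemma~\ref{lemma-eigen} therefore does not directly display the multiplicity. I would instead build eigenvectors for $d_\alpha$ by hand.
\begin{enumerate}
\item On one branch $P_1\ast S_2$ with root value $c$ at $v_i$, solve the eigen-equations at the leaves and middle vertices backwards along the recurrence~\eqref{eq:recorrencia}, keeping $u$ at value $0$.
\item Check the equation at $v_i$. It is precisely $b_1(2)\,c=0$ after the same elimination, so it holds for every $c$.
\item The equation at $u$ reduces to $(1-\alpha)\sum_i c_i=0$.
\end{enumerate}
This gives a $(k-1)$-dimensional eigenspace. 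Equality of the multiplicity with $k-1$ follows because the diagonalization produces no zero entry: each $v_i$ is reset and $a_T(u)=\alpha k-d_\alpha\ne0$. It can alternatively be confirmed by interlacing upon deleting $u$. The limit $\alpha_{n,2}\to1$ follows as in family (2).
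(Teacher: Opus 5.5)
Your treatment of family (1) misapplies the zero rule of $\operatorname{Diagonalize}$, so item (1) is left unproved, and you even cast doubt on it. The value $b_1(k)=0$ is the entry of $v_1$ itself. The children of $v_1$ carry $x_2>0$, so $v_1$ is processed by the ordinary rule and nothing inside the branch is reset. The zero rule fires one step later, when the root $u$ is processed, because $u$ then has a child with zero entry. The algorithm sets $a_{T_n}(u)=-\frac{(1-\alpha)^2}{2}$ and resets $v_1$ to $2$; no edge is removed, since $u$ has no parent. The count is then immediate, and this is the paper's argument. The first branch contributes $k$ values $x_2$ plus the entry $2$ at $v_1$. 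The second contributes $k+1$ values $x_2$ plus $b_1(k+1)=m(\alpha)>0$ at $v_2$. That gives $2k+3$ positive entries against $2k+2$ negative ones. The quantity $\frac{2\alpha}{n}-\frac{(1-\alpha)^2}{m(\alpha)}$ that you single out as the main obstacle is an artifact of the misreading, and no modification of the family is needed. For example, with $k=1$, $n=9$, $\alpha_{9,1}\approx 0.79$, the diagonal has $5>\lfloor 9/2\rfloor$ positive entries.

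The same misreading reappears in family (3). The algorithm neither cuts the edges $uv_i$ nor resets every $v_i$. It selects one child $v_j$ with zero entry, sets $v_j\mapsto 2$ and $u\mapsto-\frac{(1-\alpha)^2}{2}$, and leaves the other $k-1$ entries $b_1(2)=0$ untouched. Lemma~\ref{lemma-eigen} then gives multiplicity exactly $k-1$ directly, which is how the paper argues. Your explicit eigenvectors are correct and give multiplicity at least $k-1$. Your justification of equality (``the diagonalization produces no zero entry'') is false, however, and by Lemma~\ref{lemma-eigen} it would force multiplicity $0$, contradicting your own lower bound. Interlacing does not close the gap either. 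Deleting $u$ leaves a principal submatrix in which $d_\alpha$ has multiplicity $k$, so interlacing only bounds the multiplicity between $k-1$ and $k+1$. Within your approach the fix is easy: do not impose $x_u=0$. After elimination, the equation at $v_i$ reads $b_1(2)c_i+(1-\alpha)x_u=0$, which forces $x_u=0$, so the eigenspace is exactly $\{\sum_i c_i=0\}$. Family (2) and all three limit computations are fine and agree with the paper.
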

	\begin{proof}
		
		According to the hypotheses presented in each item, we have:
		\begin{enumerate}
			\item We combine the formula from Lemma~\ref{lemma-sequence} (c-2), where we choose $k=r_0$ and Lemma~\ref{lemma-sequence} (c-4,5) to verify that $b_1(k)=0$ and $b_1(k+1)>0$.
			\begin{figure}[ht!]
				\centering    \includegraphics[width=0.25\linewidth]{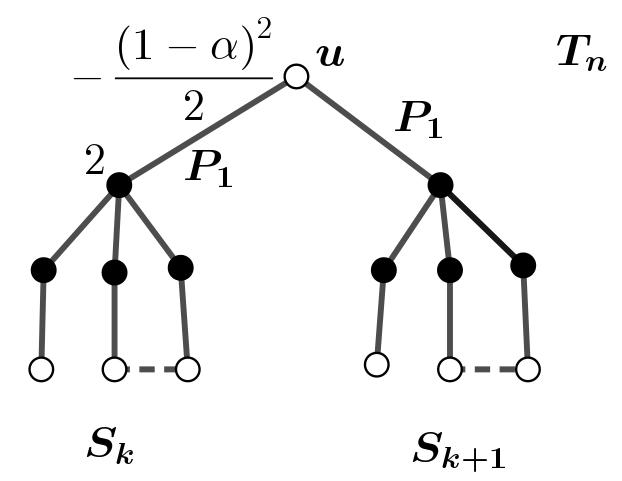}
				\caption{The tree $T_n$ with the vertices entries coming from $\operatorname{Diagonalize}(A_\alpha(T_n),-d_\alpha)$.}
				\label{fig:tree_Tn}
			\end{figure}
			By simply counting the number of positive vertices we obtain $\sigma_\alpha(T_n)=2k+3 > \lfloor n/2\rfloor$.
			Since \(n=4k+5\), we have
			\[
			2n-4=8k+6,\qquad
			kn-n+2=4k^2+k-3,\qquad
			n-2-2k=2k+3.
			\]
			Thus,
			\[
			\alpha_n
			=
			\left(
			1+
			\sqrt{
				\frac{(8k+6)(4k^2+k-3)}
				{(4k+5)^2(2k+3)}
			}
			\right)^{-1}.
			\]
			For $k \longrightarrow \infty$
			\[
			\frac{(8k+6)(4k^2+k-3)}
			{(4k+5)^2(2k+3)}
			\longrightarrow 1.
			\]
			Therefore,
			\[
			\lim_{n\to\infty}\alpha_n
			=
			\lim_{k\to\infty}
			\left(
			1+
			\sqrt{
				\frac{(8k+6)(4k^2+k-3)}
				{(4k+5)^2(2k+3)}
			}
			\right)^{-1}
			=
			\frac{1}{1+1}
			=
			\frac12.
			\]
			\item Similarly to the previous item, we combine the formula from Lemma~\ref{lemma-sequence} (c-2), where we choose $k=3$ and Lemma~\ref{lemma-sequence} (c-4,5) to verify that $b_1(2)<0$ and $b_1(4)>0$.
			\begin{figure}[ht!]
				\centering    \includegraphics[width=0.5\linewidth]{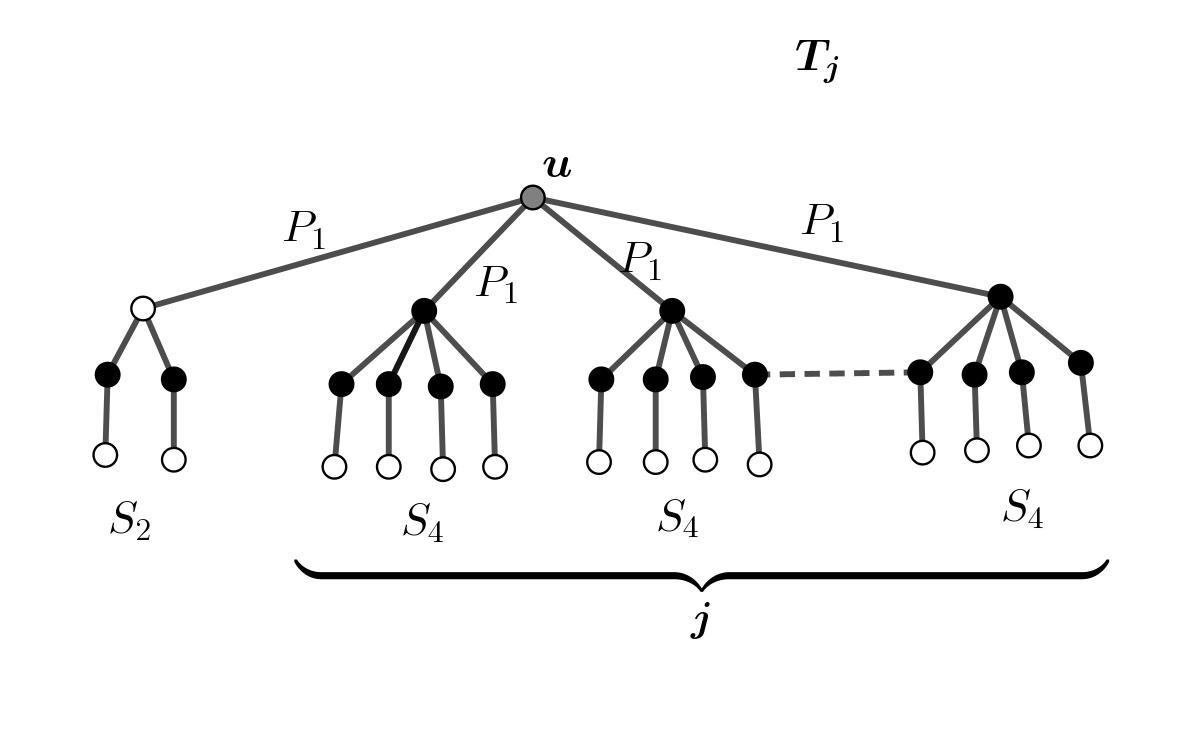}
				\caption{The tree $T_j$ with the vertices entries coming from $\operatorname{Diagonalize}(A_\alpha(T_j),-d_\alpha)$.}
				\label{fig:tree_Tj}
			\end{figure}
			By counting the positive vertices we obtain $\sigma_\alpha(T_j)\geq 2+5 j > \lfloor n/2\rfloor$.
			
			Since \(n=6+9j\), we have \(n\to\infty\) as \(j\to\infty\). Furthermore,
			\[
			\frac{(2n-4)(2n+2)}
			{n^2(n-8)}
			=
			\frac{
				\left(2-\frac{4}{n}\right)
				\left(2+\frac{2}{n}\right)
			}{
				n-8
			}
			\longrightarrow 0.
			\]
			Therefore, $\sqrt{
				\frac{(2n-4)(2n+2)}
				{n^2(n-8)}
			}
			\longrightarrow 0,$ and consequently
			\[
			\lim_{j\to\infty}\alpha_j
			=
			\lim_{j\to\infty}
			\left(
			1+
			\sqrt{
				\frac{(2n-4)(2n+2)}
				{n^2(n-8)}
			}
			\right)^{-1}
			=1.
			\]
			\item Once again, we combine the formula from Lemma~\ref{lemma-sequence} (c-2), where we choose $k=2$ and Lemma~\ref{lemma-sequence} (c-4,5) to verify that $b_1(2)=0$.
			\begin{figure}[ht!]
				\centering    \includegraphics[width=0.5\linewidth]{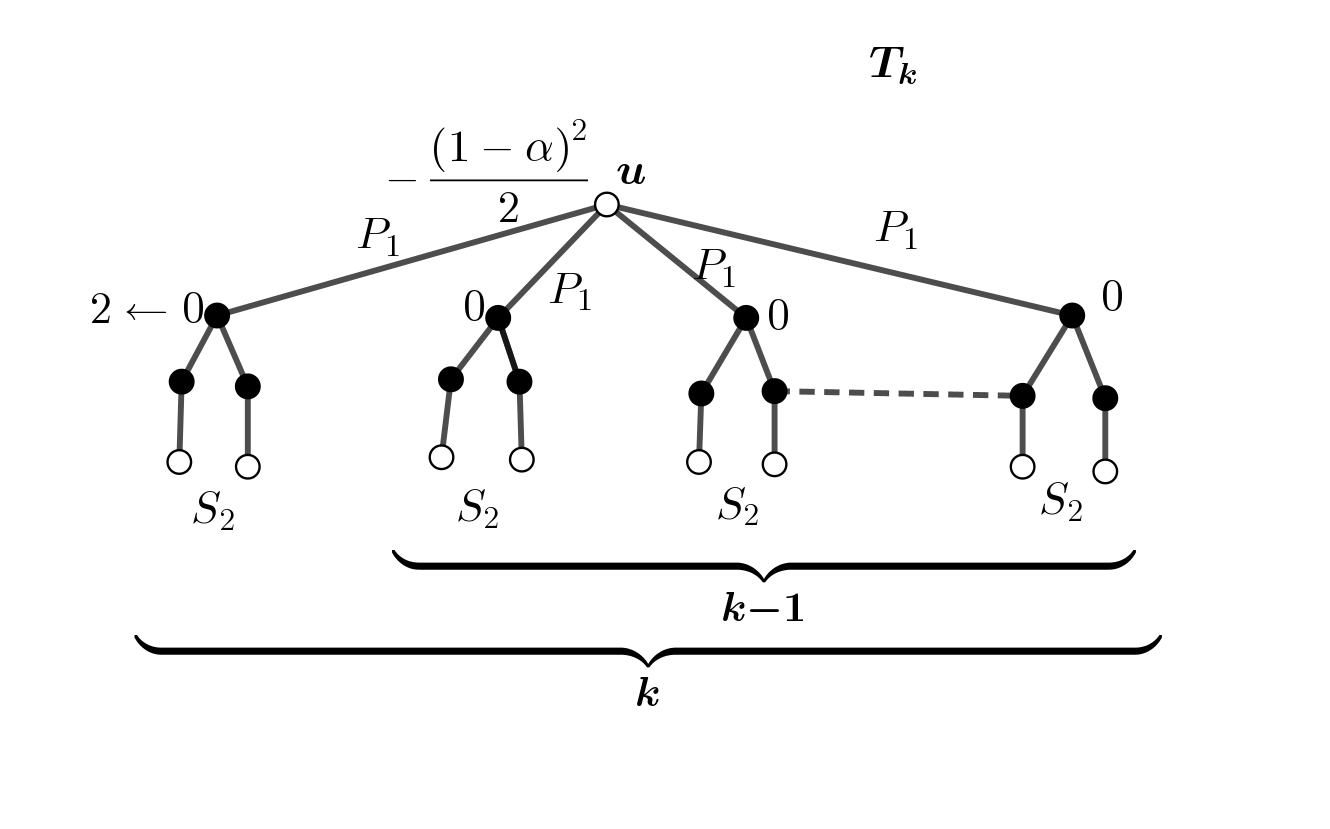}
				\caption{The tree $T_k$ with the vertices entries coming from $\operatorname{Diagonalize}(A_\alpha(T_k),-d_\alpha)$.}
				\label{fig:figmul(k-1)}
			\end{figure}
			Since $\alpha=\alpha_{n,2}$ and applying the algorithm $\operatorname{Diagonalize}$ to each copy of $P_1\ast S_2$
			produces a zero at the same vertex. When the first zero is encountered, the zero replaces the corresponding entries by $2$ and $-\frac{(1-\alpha)^2}{2}$ (see Figure~\ref{fig:figmul(k-1)}). The remaining $k-1$ copies are not affected by this operation and hence their corresponding diagonal entries remain equal to zero. Therefore, the diagonal matrix contains exactly $k-1$ zero entries. Consequently,
			$
			d_{\alpha_{n,2}}
			\in
			\operatorname{Spec}(A_{\alpha_{n,2}})
			$
			with multiplicity $k-1$. Finally, $\frac{(2n-4)(n+2)}
			{n^2(n-6)}
			\longrightarrow 0,$ and consequently $\lim_{n\to\infty}\alpha_{n,2}=1.$
		\end{enumerate}
	\end{proof}
	
	\section{Proper transformations}\label{sec:proper}
	In this section we present proper transformations which are performed on a tree $T$ that preserves the number of vertices and do not decrease the number of $A_{\alpha}$-eigenvalues above the average degree $d_\alpha$. Such transformations are local and for this reason we can translate this property in terms of elementary rational recursions.
	
	We start by analyzing the signs of the vertices after applying \(\operatorname{Diagonalize}(A_\alpha(T),-d_\alpha) \) for a tree having \(r\) pendant
	\(P_2\)'s attached to a path, as shown in Figure~\ref{fig:diagAalfa}.
	\begin{definition} If $T'$ is obtained from $T$ by a transform $\tau$, preserving the number of vertices and the shape of a tree such that $\sigma_\alpha(T')\geq \sigma_\alpha(T) $, where
		\begin{align*}
			\sigma_\alpha(T)=m_T(\alpha)(d_\alpha,\lambda_1(T)]=m_T(\alpha)(d_\alpha,\Delta]. 
		\end{align*}
	\end{definition}

	\begin{proposition}[Star-up transform]\label{prop:starup}  
		Let $u$ be a vertex of a tree $T$ with $n \geq 7$ vertices.
		If $u$ has a gpp $P_q \ast S_r$, $q\geq 2$, $0<\alpha \leq 1/2$ and 
		$0 \leq r \leq r_0$,
		then the transformation $T \stackrel{\tau}{\rightarrow} T'$ given by
		\[
		P_q \ast S_r \;\xrightarrow{\tau}\; P_{q-2} \ast S_{r+1}
		\]
		is proper.
		
		\begin{figure}[ht!]
			\centering    \includegraphics[width=0.55\linewidth]{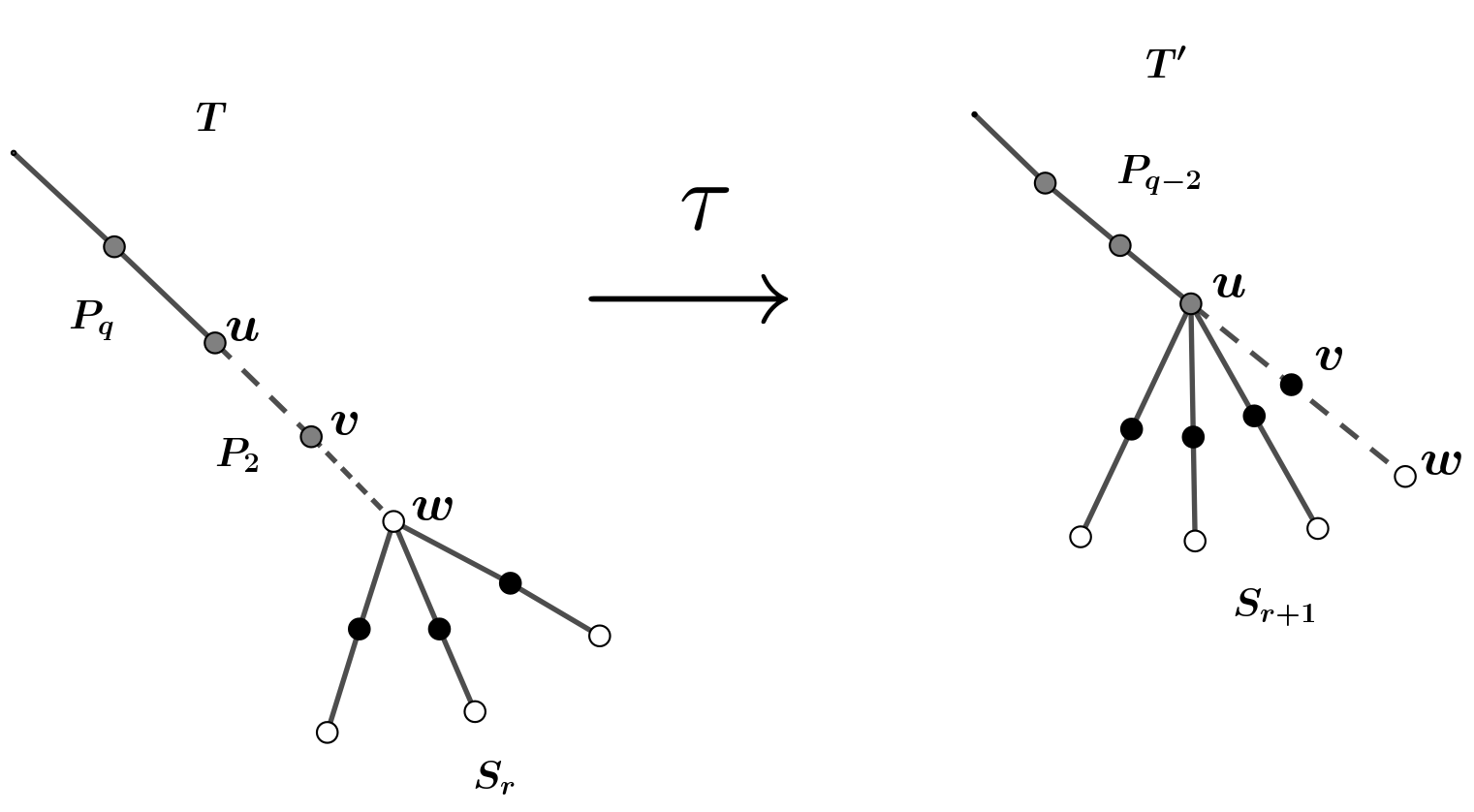}
			\caption{Star-up transform}
			\label{fig:Startran}
		\end{figure}
	\end{proposition}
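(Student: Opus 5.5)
Since the transformation $\tau$ is local, I will root both $T$ and $T'$ at $u$, or more precisely at the vertex $w$ where the gpp is attached to the rest of the tree. I then run $\operatorname{Diagonalize}(A_\alpha,-d_\alpha)$ on both trees, processing the vertices of the gpp first. The two trees have the same number $n$ of vertices, so $d_\alpha$ is unchanged. Everything outside the gpp is identical in $T$ and $T'$. It therefore suffices to show two things: the number of positive entries produced inside the gpp does not decrease, and the value passed up to the attachment vertex has the same sign in both trees, or at least yields no fewer positive entries. Lemma~\ref{lemma-eigen} then gives $\sigma_\alpha(T)\le\sigma_\alpha(T')$.

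\textbf{Counting in $T$.} In $T$ the gpp $P_q\ast S_r$ produces $r$ leaves with value $x_1<0$ and $r$ vertices with value $x_2>0$, by Lemma~\ref{lemma-sequence}(a). Along the path we get $b_1(r),b_2(r),\dots,b_q(r)$, where $b_q(r)$ is the last value sent to $u$.

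\textbf{Counting in $T'$.} In $T'$ the gpp $P_{q-2}\ast S_{r+1}$ produces $r+1$ negative and $r+1$ positive entries at the rays. Along the path we get $b_1(r+1),\dots,b_{q-2}(r+1)$.

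\textbf{Comparison.} The key is to compare the two path sequences.

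\emph{Case $r<r_0$.} By Lemma~\ref{lemma-sequence}(c4) we have $b_1(r)<0$ and $b_2(r)>0$. So the first two path vertices of $T$ contribute one positive entry, exactly as the extra ray in $T'$ does (values $x_1<0$, $x_2>0$). What remains is to compare the tails $b_3(r),\dots$ with $b_1(r+1),\dots$. Both satisfy the same recurrence $y\mapsto \frac{2\alpha}{n}-\frac{(1-\alpha)^2}{y}$. I will use monotonicity of this Möbius map, which is increasing on each interval not containing $0$. Comparing $b_3(r)=\frac{2\alpha}{n}-\frac{(1-\alpha)^2}{b_2(r)}$ with $b_1(r+1)=b_1(r)+m$, I expect $b_1(r+1)\ge b_3(r)$ after a direct computation using $m=\alpha-(1-\alpha)^2/x_2$. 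That computation should reduce to comparing $x_2$ with $b_2(r)$, i.e.\ $b_2(r)\le x_2$, which holds since $b_1(r)\ge x_1$ and both are negative.

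\emph{Sign counts along the tails.} From $b_1(r+1)\ge b_3(r)$ I will argue inductively that the tails have the same number of sign changes, or that $T'$ has at least as many positive values. The tool is the standard interlacing fact for such continued fractions: if two sequences start at $y\le y'$ and follow the same increasing-between-poles recurrence, then the number of positives in the second is at least that in the first, up to the last term. The final values passed to the attachment vertex are handled as in Jacobs et al.: if the final value in $T'$ is larger, then the parent's value in $T'$ is smaller, and the effect on positivity can be absorbed by the one-step count argument.

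\emph{Case $r=r_0$.} Here $r_0\in\mathbb{N}$ and $b_1(r)=0$, so the zero-handling branch of the algorithm applies. It produces $2$ and $-\frac{(1-\alpha)^2}{2}$ and cuts the edge. I will treat this case separately and compare it directly to $T'$, using $b_1(r+1)=m>0$. The hypothesis $\alpha\le 1/2$ together with Lemma~\ref{lemma-sequence}(c3) ensures we stay in the regime where these sign statements hold.

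\textbf{Main obstacle.} I expect the hardest step to be the propagation of the inequality through the tail. Poles of the recurrence can flip the order of the two sequences, so the sign-counting must be done carefully; the natural tool is Sturm-type interlacing for tridiagonal path blocks. The first thing I would try is to view both tails as Schur complements of the same path matrix with a modified first diagonal entry, and invoke rank-one (Cauchy) interlacing. Together with a monotonicity check of the final value passed to the attachment vertex, this should give that the number of positive entries differs by at most the compensating amount.
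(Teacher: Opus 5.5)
Your key inequality $b_1(r+1)\ge b_3(r)$ is true, but the reason you give for it is backwards. From $x_1\le b_1(r)<0$ you get $-1/b_1(r)\ge -1/x_1$, hence $b_2(r)\ge x_2$. For example, $\alpha=\tfrac12$, $n=9$, $r=1$ gives $b_2\approx 1.245>x_2\approx 0.754$. So in
\[
b_1(r+1)-b_3(r)=\bigl(b_1(r)-x_1\bigr)+(1-\alpha)^2\Bigl(\frac{1}{b_2(r)}-\frac{1}{x_2}\Bigr)
\]
the second term is $\le 0$ and has to be beaten by the first. That balancing is the real content of the proposition. Write
\[
g(t)=t+\frac{(1-\alpha)^2}{2\alpha/n-(1-\alpha)^2/t}=\frac{2\alpha t^2}{2\alpha t-n(1-\alpha)^2}.
\]
Then the difference equals $g(b_1(r))-g(x_1)$, and
\[
g'(t)=\frac{4\alpha t\,(\alpha t-n(1-\alpha)^2)}{(2\alpha t-n(1-\alpha)^2)^2}>0 \quad\text{for } t<0.
\]
For $r=r_0$ you likewise need the explicit gain at the new centre, $\alpha r_0-\frac{(r_0+1)(1-\alpha)^2}{x_2}=-g(x_1)=\frac{2\alpha x_1^2}{n(1-\alpha)^2-2\alpha x_1}>0$. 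Saying you will ``treat it separately'' does not supply this.

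The larger gap is the propagation along the tail, which you leave open. Your heuristic for the last step is also wrong as stated. Since $y\mapsto -(1-\alpha)^2/y$ is increasing on each half-line, if the two final values have the same sign, the larger one makes the parent's entry larger, not smaller.

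The missing idea is that no propagation is needed. The sign count of Diagonalize is the inertia of $A_\alpha-d_\alpha I$, so it does not depend on the root. Root both trees at the vertex $z$ at distance $q-2$ from $u$, which is the vertex that becomes the centre of $S_{r+1}$. Then:
\begin{itemize}
\item The branch below $z$ contributes $r+1$ negative and $r+1$ positive entries in both trees. These are $x_1,x_2$ on the rays and $b_1(r)<0<b_2(r)$ on the two path vertices, or $2$ and $-(1-\alpha)^2/2$ from the zero branch when $r=r_0$.
\item Every other vertex is processed identically in $T$ and $T'$.
\item Everything therefore reduces to $a_{T'}(z)\ge a_T(z)$ at the root. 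The case where another child of $z$ has entry $0$ is trivial, since both roots then get $-(1-\alpha)^2/2$.
\end{itemize}
This is exactly the paper's proof.

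Equivalently, your rank-one idea works if you apply it to the whole tree rather than to the tail. After eliminating only the changed branch, the two Schur complements differ by $\delta\, e_z e_z^{\top}$, with $\delta=g(b_1(r))-g(x_1)\ge 0$, or $\delta=-g(x_1)$ when $r=r_0$. Haynsworth inertia additivity plus Weyl's inequality then give $\sigma_\alpha(T)\le\sigma_\alpha(T')$, and there is no final value to track.
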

	
	\begin{proof}
		We consider the transformation on a tree $T$, as illustrated in Figure~\ref{fig:Startran}. 
		This removes one pendant path $P_2$ at the vertex $u$ connected to the star $S_r$
		formed by $r$ paths $P_2$, and produces a new tree $T'$ with a star $S_{r+1}$
		attached at $u$. We consider $u$ as the root of $T$, meaning that it will be the
		last vertex to be processed.

		Using Lemma~\ref{lemma-eigen}, we conclude that at least half of the outputs at the star are positive
		when considering the transformed matrices $\operatorname{Diag}(A_\alpha(T),-d_\alpha)$
		and $\operatorname{Diagonalize}(A_\alpha(T'),-d_\alpha)$.
		
		The parts of both graphs that are invariant under $\tau$ retain the same signs. The only possible changes occur at the root $u$ and along the path $\{v,w\}$. We distinguish three cases.
		\begin{enumerate}
			\item[\textbf{Case 1:}] If $r<r_0$, then $b_1(r)<0$  and $b_2=\frac{2\alpha}{n}-\frac{(1-\alpha)^2}{b_1}>0$.\\
			Hence the signs at $w$ and $v$ do not change. Consequently, the only remaining vertex to analyze is $u$. \\ 
			
			Let $v_1,v_2,\ldots,v_l$ be other children of $u$ in the unchanged part. Then, we have two sub-cases:
			\begin{enumerate}
				\item $a_T(v_j)=0$ for some $j\in\{1,2,\dots,l\}$, the same for $a_{T'}(v_j)=0$, in this situation, $a_T(u)=a_{T'}(u)=-\frac{(1-\alpha)^2}{2}$.
				Therefore,  $\sigma_\alpha(T)=\sigma_\alpha(T')$ and $\tau$ is proper.  
				
				\item Otherwise, if 
				$a_T(v_j)=a_{T'}(v_j)\neq 0,\ \forall j\in\{1,2,\dots,l\}$  then
				\[
				a_T(u)=\alpha \deg_T(u)-d_\alpha-\displaystyle \sum_{j=1}^{l} \frac{(1-\alpha)^2}{a_{T}(v_j)}-\frac{(1-\alpha)^2}{b_2(r)}
				\]
				
				\[
				a_{T'}(u)=\alpha \deg_{T'}(u)-d_\alpha-\displaystyle \sum_{j=1}^{l} \frac{(1-\alpha)^2}{a_{T'}(v_j)}-\frac{(r+1)(1-\alpha)^2}{x_2}
				\]
				Therefore, using the value of $b_1(r)$ and $b_2(r)$ in \eqref{eq:sistema_b} and $x_2$ in \eqref{eq:recorrencia} we obtain
				\begin{align*}
					a_{T'}(u)-a_T(u)
					&= r\alpha-\frac{(r+1)(1-\alpha)^2}{x_2}
					+\frac{(1-\alpha)^2}{b_2(r)}\\
					&=r \left(\alpha-\frac{(1-\alpha)^2}{x_2}\right)
					+\frac{(1-\alpha)^2}{b_2(r)}-\frac{(1-\alpha)^2}{x_2}\\
					&= r \left(\alpha-\frac{(1-\alpha)^2}{x_2}\right)
					+(1-\alpha)^2\left(\frac{1}{b_2(r)}-\frac{1}{x_2}\right)\\
					\shortintertext{Since \(b_1-x_1 = r \left(\alpha-\frac{(1-\alpha)^2}{x_2}\right)\) and \(b_2=\frac{2\alpha}{n}-\frac{(1-\alpha)^2}{b_1}\), we have:}
					a_{T'}(u)-a_T(u)
					&= (b_1-x_1) + (1-\alpha)^2\left(\frac{1}{b_2(r)}-\frac{1}{x_2}\right)\\
					&= \left(b_1+\frac{(1-\alpha)^2}{\frac{2\alpha}{n}-\frac{(1-\alpha)^2}{b_1}}\right)-\left(x_1+\frac{(1-\alpha)^2}{\frac{2\alpha}{n}-\frac{(1-\alpha)^2}{x_1}}\right).
				\end{align*}
				Defining the function $g(t)$ by
				\[
				g(t) = t + \frac{(1-\alpha)^2}{\frac{2\alpha}{n} - \frac{(1-\alpha)^2}{t}}
				= \frac{2\alpha t^2}{2\alpha t - n(1-\alpha)^2},
				\]
				we can write $a_{T'}(u) - a_T(u) = g(b_1) - g(x_1).$ We need to show that $g(t)-g(x)$ is positive for $x_1 \leq t \leq b_1 < 0$.
				
				Since the  asymptote of $g(t)$ is $ t_0 = \frac{(1-\alpha)^2}{2\alpha}\,n,$  with $0<\alpha<1$ and $n\geq 3$, we have $t_0>0$. Moreover, as $b_1<0$, it follows that $b_1 < 0 < t_0,$ so $b_1$ lies strictly on the left of the asymptote.
				
				One can show that $g'(t) = \frac{4\alpha t \bigl(\alpha t - n(1-\alpha)^2\bigr)}{\bigl(2\alpha t - n(1-\alpha)^2\bigr)^2}.$ Since the denominator $\bigl(2\alpha t - n(1-\alpha)^2\bigr)^2$ is always positive, the sign of $g'(t)$ is determined by the numerator. For $t < 0$ (as $t \leq b_1 < 0$), $\alpha > 0$ and $n > 1$, we have:
				\begin{itemize}
					\item $4\alpha t < 0$ (negative, since $t < 0$),
					\item $\alpha t - n(1-\alpha)^2 < 0$ (negative, since $\alpha t < 0$ and $-n(1-\alpha)^2 < 0$).
				\end{itemize}
				Thus, $4\alpha t \cdot \bigl(\alpha t - n(1-\alpha)^2\bigr) > 0$. Therefore, $g(t)$ is strictly increasing in the desired interval.
				
				As $x_1 \leq b_1$, we conclude that $g(x_1) \leq g(b_1)$ therefore, $a_{T'}(u) \geq  a_T(u) $
			\end{enumerate}

			\item[\textbf{Case 2:}] If $r=r_0 \in \mathbb{N}$, then $b_1(r)=0$.
			\begin{figure}[ht!]
				\centering    \includegraphics[width=0.45\linewidth]{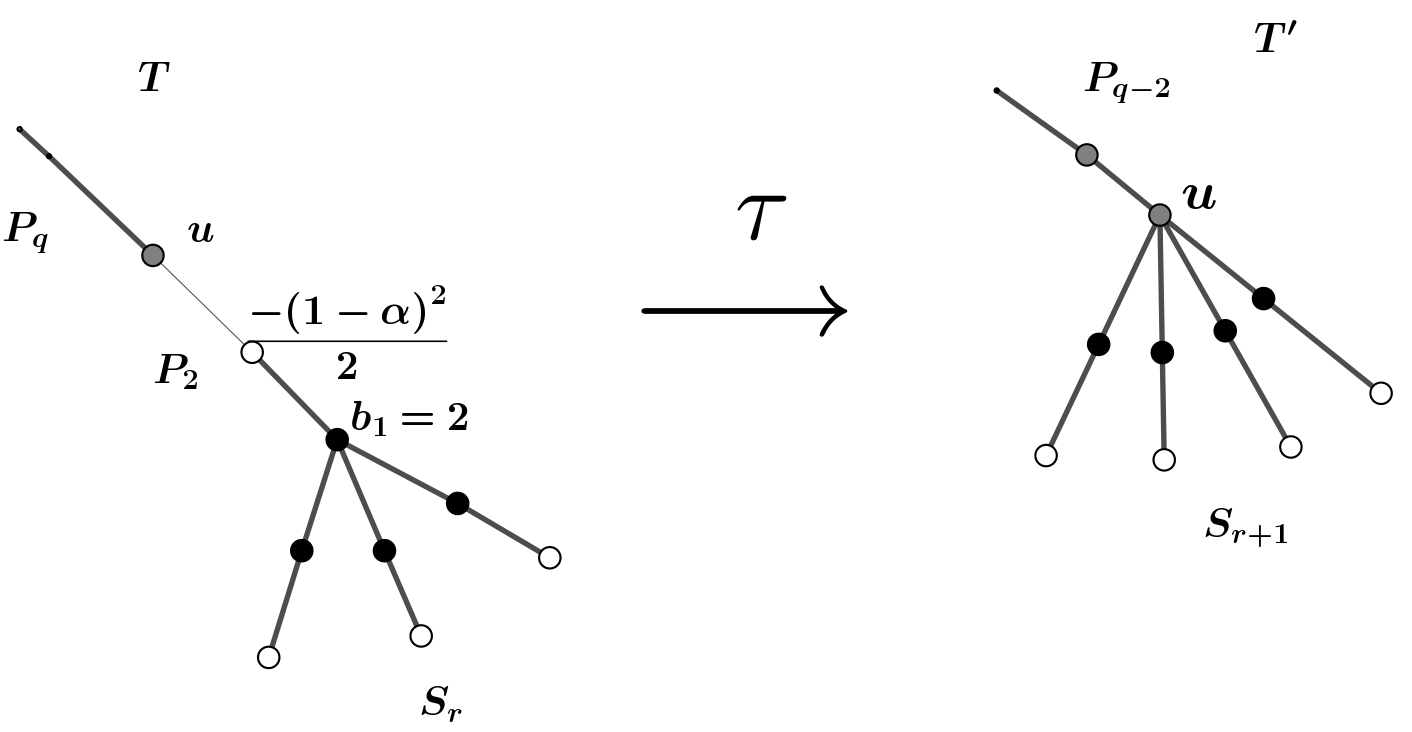}   
				\caption{Star-up transform for $b_1=0$}    \label{fig:Transb1}
			\end{figure}
			Since the balance of signal $+$ and $-$ remains the same in the pendant path, we only need to track the root $u$.
			
			Let $v_1, v_2, \ldots, v_l$ ($l \geq 1$) be other children of $u$ in the unchanged part. Then we have two cases:
			\begin{enumerate}
				\item $a_T(v_j)=0$ for some $j\in\{1,2,\dots,l\}$, the same for $a_{T'}(v_j)=0$, in this situation, $a_T(u)=a_{T'}(u)=-\frac{(1-\alpha)^2}{2}$.
				Therefore  $\sigma_\alpha(T)=\sigma_\alpha(T')$ and $\tau$ is proper.
				\item Otherwise, if 
				$a_T(v_j)=a_{T'}(v_j)\neq 0,\ \forall j\in\{1,2,\dots,l\}$ then we have
			\end{enumerate}  
			\begin{align*}
				a_T(u)=\alpha(l+1)-d_\alpha-\displaystyle \sum_{j=1}^{l} \frac{(1-\alpha)^2}{a_{T}(v_j)}
			\end{align*}
			\begin{align*}    
				a_{T'}(u)=\alpha(r+l+1)-d_\alpha-\displaystyle \sum_{j=1}^{l} \frac{(1-\alpha)^2}{a_{T'}(v_j)}- \frac{(r+1)(1-\alpha)^2}{x_2}.
			\end{align*}
			
			We claim that $a_{T'}(u)\geq a_{T}(u)$. In order to see that we consider
			\begin{align*}
				a_{T'}(u)-a_{T}(u)=\alpha r-\frac{(r+1)(1-\alpha)^2}{x_2}
			\end{align*}
			or equivalently
			\begin{align*}
				a_{T'}(u)-a_{T}(u)
				&= b_1-\left(x_1+\frac{(1-\alpha)^2}{\frac{2\alpha}{n}-\frac{(1-\alpha)^2}{x_1}}\right) = -\left(x_1+\frac{(1-\alpha)^2}{\frac{2\alpha}{n}-\frac{(1-\alpha)^2}{x_1}}\right).
			\end{align*}
			Consider the function $f(x_1) = -\left(x_1 + \frac{(1-\alpha)^2}{\frac{2\alpha}{n} - \frac{(1-\alpha)^2}{x_1}}\right) $. One can write
			
			\begin{align*}
				f(x_1) 
				&= -\frac{\frac{2\alpha x_1}{n}}{\frac{2\alpha x_1 - n(1-\alpha)^2}{nx_1}}
				= -\frac{2\alpha x_1}{n} \cdot \frac{nx_1}{2\alpha x_1 - n(1-\alpha)^2} \\[4pt]
				&= -\frac{2\alpha x_1^2}{2\alpha x_1 - n(1-\alpha)^2}
				= \frac{2\alpha x_1^2}{n(1-\alpha)^2 - 2\alpha x_1}.
			\end{align*}
			As \( \alpha \in (0,1/2] \) , \( n \ge 3 \) and
			\( x_1 < 0 \), then \( -2\alpha x_1 > 0 \), so $n(1-\alpha)^2 - 2\alpha t 
			= n(1-\alpha)^2 + (-2\alpha t) > 0$. Hence $f(x_1) > 0$ and $a_{T'}(u)>a_{T}(u)$.
			
		\end{enumerate}
		
	\end{proof}
	
	\begin{proposition}\label{prop:stardown}(Star-down transform) 
		Let $u$ be a vertex of  a tree $T$ with $n \geq 7$ vertices. Consider the transformation $T \stackrel{\tau}{\rightarrow} T'$, where the gpps are attached to a vertex $u$,
		$$ P_{1}*S_{r} \oplus  P_{2}  \to  P_{1} \oplus S_{r+1}.$$ If $0\leq r\leq r_0$ and $0<\alpha\leq1/2$, then $\tau$ is a proper transformation.
		
		\begin{proof}Suppose $r< r_0$, then $b_1(r)<0$. In $T'$ the sign of $b_1(r+1)$ can be negative, zero or positive. If it is zero or positive then $T'$ produces an additional positive output so that $\tau$ is proper despite the signal at $u$. Therefore, the remaining case is $b_1(r+1)<0$; then the number of negative and positive outputs remains the same, except possibly at $u$ (see Figure~\ref{fig:stardown}).
			
			\begin{figure}[ht!]
				\centering    \includegraphics[width=0.6\linewidth]{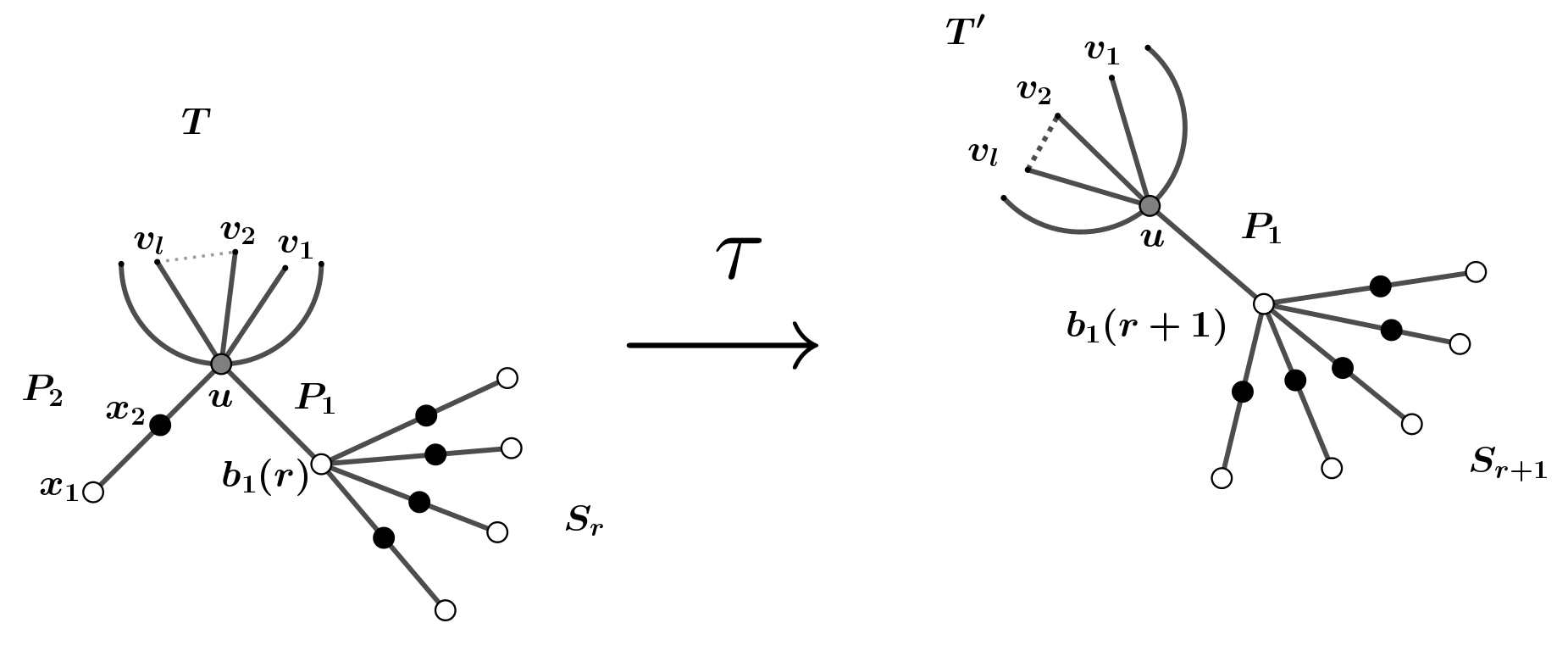}
				\caption{Star-down for $r<r_0$}   \label{fig:stardown}
			\end{figure}
			For $u$ we must consider the possibility of any of his children $v_1,v_2,\ldots,v_l$ to produce zero.
			\begin{enumerate}
				\item $a_T(v_j)=0$ for some $j\in\{1,2,\dots,l\}$, the same for $a_{T'}(v_j)=0$, by the $\operatorname{Diagonalize}(A_\alpha(T),-d_\alpha)$ algorithm, we obtain $a_T(u)=a_{T'}(u)=-\frac{(1-\alpha)^2}{2}$. Therefore  $\sigma_\alpha(T)=\sigma_\alpha(T')$ and $\tau$ is proper.
				\item Otherwise, if 
				$a_T(v_j)=a_{T'}(v_j)\neq 0,\ \forall j\in\{1,2,\dots,l\}$.
				
				We must compare $a_T(u)$ and $a_{T'}(u)$:
			\end{enumerate} 
			\begin{align*}
				a_T(u)=\alpha \deg_T(u)-d_\alpha-\frac{(1-\alpha)^2}{b_1(r)}-\frac{(1-\alpha)^2}{x_2}-\displaystyle \sum_{j=1}^{l} \frac{(1-\alpha)^2}{a_{T}(v_j)}
			\end{align*}
			\begin{align*}
				a_{T'}(u)=\alpha \deg_{T'}(u)-d_\alpha-\frac{(1-\alpha)^2}{b_1(r+1)}-\displaystyle \sum_{j=1}^{l} \frac{(1-\alpha)^2}{a_{T'}(v_j)}
			\end{align*}
			
			We claim $a_{T'}(u)\geq a_{T}(u)$
			\begin{align*}
				a_{T'}(u)-a_{T}(u)=-\left(\alpha- \frac{(1-\alpha)^2}{x_2}\right)+(1-\alpha)^2\left(\frac{1}{b_1(r)}-\frac{1}{b_1(r+1)}\right)
			\end{align*}
			As $b_1(r+1)-b_1(r)=\alpha-\frac{(1-\alpha)^2}{x_2}$. Then
			\begin{align*}
				a_{T'}(u)-a_{T}(u)=\left(\alpha- \frac{(1-\alpha)^2}{x_2}\right)\left(-1+\frac{(1-\alpha)^2}{b_1(r)b_1(r+1)}\right)
			\end{align*}
			From Lemma~\ref{lemma-sequence} (b) we recall that $m(\alpha)=\alpha - \frac{(1-\alpha)^2}{x_2} > 0$. 
			
			We claim that
			\[
			-1 + \frac{(1-\alpha)^2}{b_1(r)\,b_1(r+1)} > 0.
			\]
			Since $b_1(r) < b_1(r+1) < 0$, we have $b_1(r)\,b_1(r+1) > 0$, and the inequality above is equivalent to
			
			\begin{equation} \label{eq:ggf}
				(1-\alpha)^2 > b_1(r)\,b_1(r+1).
			\end{equation}
			By Lemma~\ref{lemma-sequence} (c-4) we have $-\alpha < b_1(r) < b_1(r+1) < 0$.
			Note that $\alpha >- b_1(r)>0$ and $\alpha >- b_1(r+1)>0$ thus $b_1(r)\,b_1(r+1) < \alpha^2$. On the other hand, we claim that $\alpha^2\leq (1-\alpha)^2$. Indeed, 
			\[\alpha^2\leq (1-\alpha)^2 \iff \alpha^2\leq 1-2 \alpha+ \alpha^2  \iff 0\leq 1-2 \alpha \iff \alpha \leq 1/2.\]
			Using both inequalities one obtains $b_1(r)\,b_1(r+1) < (1-\alpha)^2$ showing that Equation~\ref{eq:ggf} holds.
			
			Finally, if $r=r_0$ (so $r_0\in\mathbb{N}$), then $b_1(r)=b_1(r_0)=0$ and consequently $a_T(v)=2>0$, $a_{T'}(v)>0$, because $r+1>r_0$. Also, $a_T(u)=\frac{-(1-\alpha)^2}{2}<0$ implying that the transformation is proper, since $\sigma_\alpha$ cannot decrease.
			
			\begin{figure}[ht!]
				\centering    \includegraphics[width=0.6\linewidth]{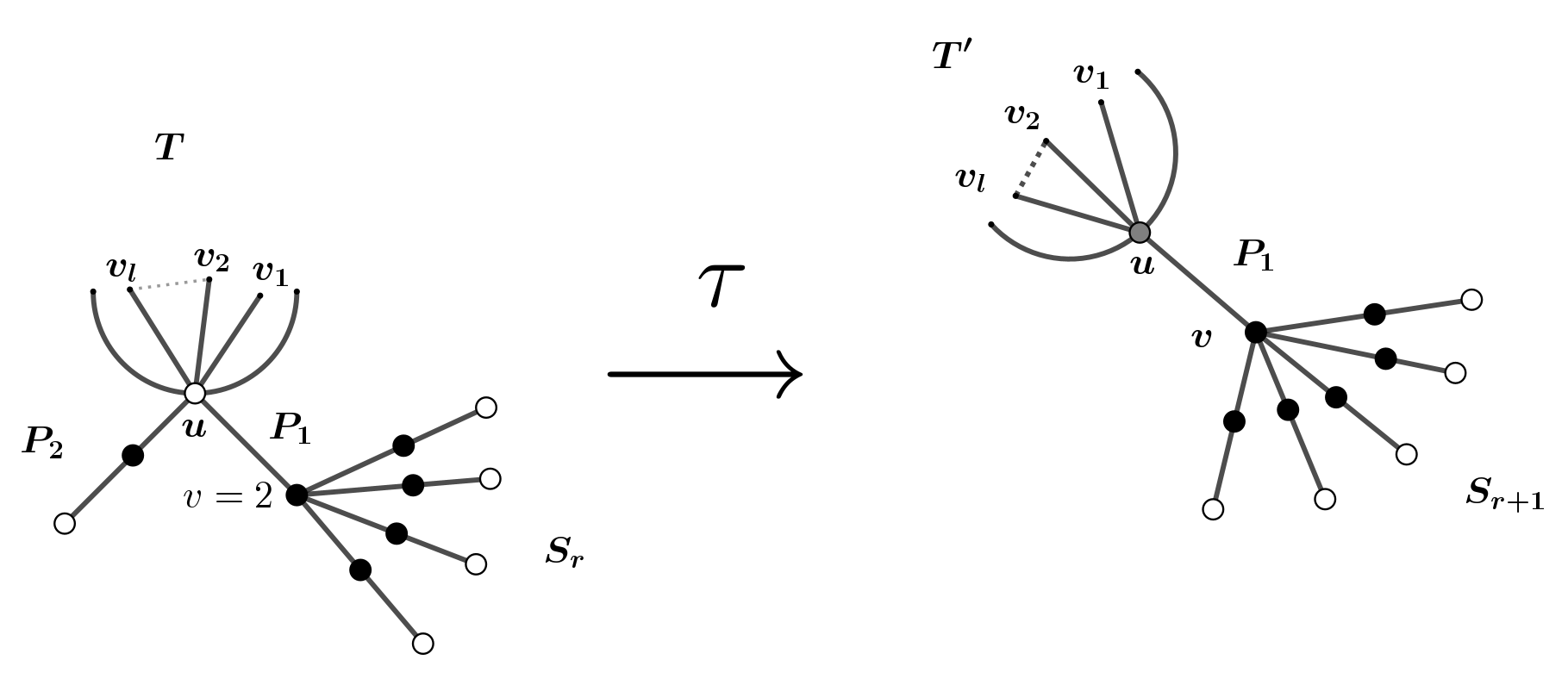}
				\caption{Star-down  for $r=r_0$}   \label{fig:stardown2}
			\end{figure}
		\end{proof}
	\end{proposition}
	
	\begin{proposition}\label{prop:stardown-transf}(Star-star transform)
		Let $u$ be a starlike vertex of a tree $T$ with $n\geq 7$ vertices. Let
		$q_1, q_2 \in \{ 0,1 \}$ and $0<\alpha\leq1/2$. Suppose $u$ has two generalized pendant paths
		$P_{q_1}\ast S_{r_1}$ and $P_{q_2}\ast S_{r_2}$. Then the following
		transformations $T \stackrel{\tau}{\to} T'$ are proper:
		\begin{enumerate} 
			\item[\text{a)}] For $q_1=q_2=0$ and any $r_1', r_2'$ such that $r_1' + r_2' = r_1 + r_2$;
			\[
			(P_0 * S_{r_1}) \oplus (P_0 * S_{r_2}) \stackrel{\tau}{\to} (P_0 * S_{r_1'}) \oplus (P_0 * S_{r_2'}),
			\]
			
			\item[\text{b)}] For $q_1 = q_2 = 1$ and $0 \le r_1, r_2 \le \lfloor r_0 \rfloor$;
			\[
			(P_1 * S_{r_1}) \oplus (P_1 * S_{r_2}) \stackrel{\tau}{\to} 
			\begin{cases}
				P_2 * S_{r_1+r_2} & \text{if } r_1+r_2 \le  \lfloor r_0 \rfloor \\
				P_0 * S_{r_1+r_2-   \lfloor r_0 \rfloor} \oplus (P_2 * S_{\lfloor r_0 \rfloor}) & \text{if } r_1+r_2 >   \lfloor r_0 \rfloor
			\end{cases}
			\]

			\item[\text{c)}] For $0<\alpha\leq 1/2$, $q_1 = 1, q_2 = 0$ and $0 \le r_1, r_2 \le \lfloor r_0 \rfloor$;
			\[
			(P_1 * S_{r_1}) \oplus (P_0 * S_{r_2}) \stackrel{\tau}{\to} 
			\begin{cases}
				P_1 * S_{r_1+r_2} & \text{if } r_1+r_2 \le \lfloor r_0 \rfloor \\
				P_0 * S_{r_1+r_2-\lfloor r_0 \rfloor} \oplus (P_1 * S_{\lfloor r_0 \rfloor}) & \text{if } r_1+r_2 > \lfloor r_0 \rfloor
			\end{cases}
			\]
		\end{enumerate}
	\end{proposition}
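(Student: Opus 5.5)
We root $T$ at $u$, so $u$ is processed last by $\operatorname{Diagonalize}(A_\alpha(T),-d_\alpha)$, and we compare the outputs of $T$ and $T'$ vertex by vertex, as in Propositions~\ref{prop:starup} and \ref{prop:stardown}. The part of $T$ outside the two gpps is untouched. For the other children $v_1,\dots,v_l$ of $u$, two situations arise. If some $a_T(v_j)=0$, then $a_T(u)=a_{T'}(u)=-\frac{(1-\alpha)^2}{2}$, so only the counts inside the gpps matter. Otherwise the contributions $\sum_j (1-\alpha)^2/a_T(v_j)$ coincide in $T$ and $T'$ and cancel in $a_{T'}(u)-a_T(u)$. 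In every case the task is therefore to compare the number of positive entries inside the modified gpps, and then the sign at $u$ through the difference $a_{T'}(u)-a_T(u)$ when the inner counts agree.

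\textbf{Case (a).} This should be immediate. Each $P_0\ast S_r$ contributes $r$ copies of $x_1<0$ and $r$ copies of $x_2>0$ (Lemma~\ref{lemma-sequence}(a)), plus the term $\alpha r - r(1-\alpha)^2/x_2 = r\,m(\alpha)$ to $a(u)$. Hence the multiset of entries and $a(u)$ depend only on $r_1+r_2$, and $\sigma_\alpha(T)=\sigma_\alpha(T')$.

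\textbf{Cases (b) and (c).} For $r\le\lfloor r_0\rfloor$, a gpp $P_1\ast S_r$ gives the vertex $b_1(r)\le 0$ at the top of the path, with $b_1(r)<0$ when $r<r_0$ by Lemma~\ref{lemma-sequence}(c-4). Its contribution to $a(u)$ is $\alpha-(1-\alpha)^2/b_1(r)$. A gpp $P_2\ast S_r$ gives $b_1(r)<0$, then $b_2(r)>0$, then a contribution $\alpha-(1-\alpha)^2/b_2(r)$. First I count signs.
\begin{itemize}
\item[(b)] Before the transformation there are two negative $b_1$'s. After it there is one negative $b_1$ and one positive $b_2$, plus, in the overflow case, extra $x_1,x_2$ pairs. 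So the number of positive entries strictly increases by at least one, and this absorbs any sign change at $u$: since a positive gain of one offsets a possible loss at $u$, $\tau$ is proper without computing $a(u)$.
\item[(c)] The inner counts coincide, with one negative top vertex in each case. We must then show $a_{T'}(u)\ge a_T(u)$.
\end{itemize}

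The difference in case (c) without overflow is
\[
\bigl[\alpha-\tfrac{(1-\alpha)^2}{b_1(r_1+r_2)}\bigr]-\bigl[\alpha-\tfrac{(1-\alpha)^2}{b_1(r_1)}\bigr]-r_2m,
\]
using $b_1(r_1+r_2)-b_1(r_1)=r_2m$. It factors as
\[
r_2m\Bigl(-1+\frac{(1-\alpha)^2}{b_1(r_1)\,b_1(r_1+r_2)}\Bigr).
\]
This is exactly the expression handled in Proposition~\ref{prop:stardown}. Since $-\alpha<b_1(r_1)\le b_1(r_1+r_2)<0$, we get $b_1(r_1)b_1(r_1+r_2)<\alpha^2\le(1-\alpha)^2$ for $\alpha\le 1/2$, and the difference is nonnegative. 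In the overflow case one analogously splits into $P_1\ast S_{\lfloor r_0\rfloor}$ plus a $P_0$-star and obtains the same factorization with $r_1+r_2$ replaced by $\lfloor r_0\rfloor$.

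\textbf{Main obstacle.} The degenerate boundary values $r_i=r_0\in\mathbb N$ (where $b_1=0$ triggers the zero rule), and $b_1(r_1+r_2)$ possibly hitting $0$ or becoming positive, need care. I would treat them exactly as in Case 2 of Proposition~\ref{prop:starup}. When a zero occurs, the entry $2$ appears and $u$ receives $-\frac{(1-\alpha)^2}{2}$. Then I verify, by direct sign counting, that the number of positive entries in $T'$ is at least that of $T$, using Lemma~\ref{lemma-sequence}(c-5) for the positivity of $b_1$ past $r_0$.
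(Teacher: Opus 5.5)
Your proof is correct and is essentially the paper's. Part (a) is a relabelling of the same tree. Part (b) is the same sign count: the new positive entry ($b_2>0$, or the entry $2$ at the star centre when $b_1(\lfloor r_0\rfloor)=0$, in which case the path vertex adjacent to $u$, not $u$ itself, receives $-\frac{(1-\alpha)^2}{2}$ and the count still works) offsets any loss at $u$, and the degenerate cases $r_i=r_0\in\mathbb{N}$ go by the same direct counting. The only variation is in (c), where you compute $a_{T'}(u)-a_T(u)=r_2m\bigl(-1+\frac{(1-\alpha)^2}{b_1(r_1)b_1(r_1+r_2)}\bigr)$ in one step instead of chaining the Star-down transforms of Proposition~\ref{prop:stardown}; this rests on the same estimate $b_1(r_1)b_1(r_1+r_2)<\alpha^2\le(1-\alpha)^2$.
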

	
	\begin{proof}
		We now proceed with the proof of each item.
		\begin{enumerate}
			\item[(a)] 
			\begin{figure}[ht!]
				\centering
				\includegraphics[width=0.6\linewidth]{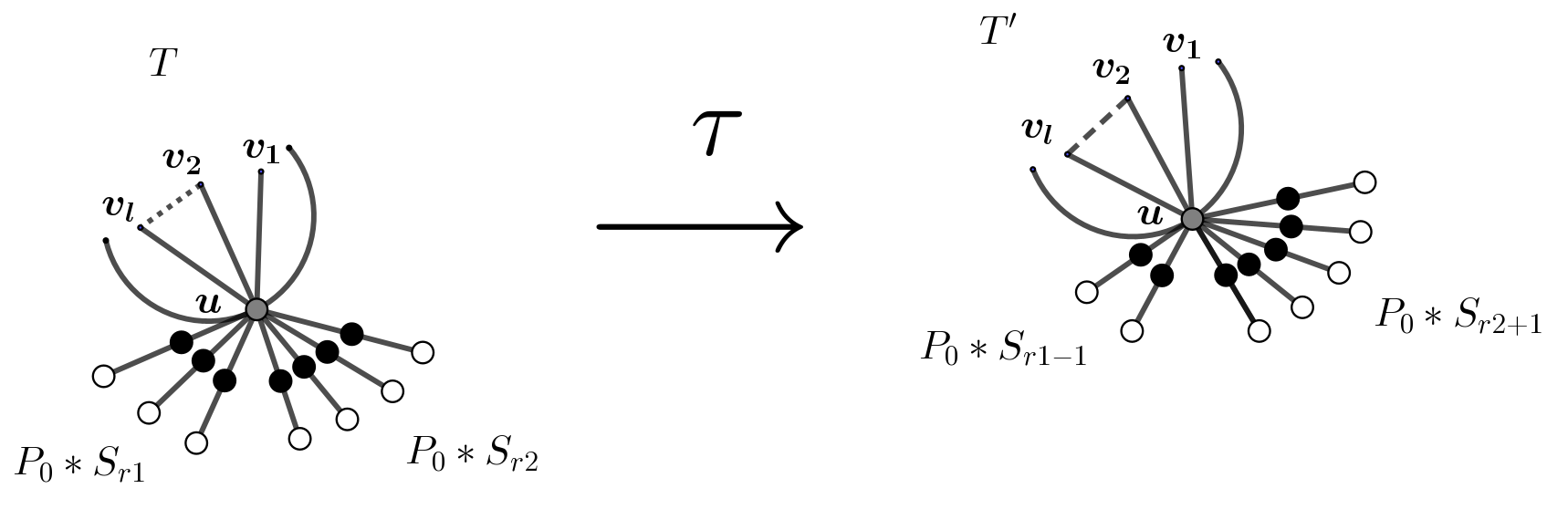}
				\caption{An example of a trivial 0-0 star-star transform}\label{star-star:fig1}
			\end{figure}
			The case $q_{1}=0$ and $q_{2}=0$ (see Figure~\ref{star-star:fig1}) is
			actually a formal rearrangement. We do not change the graph, only perform a
			different partition of the $r_1+r_2$ original $P_2$'s attached to $u$. In
			this case it is not necessary to consider the sign of any vertices. It is therefore a trivial Star-star transformation.
			
			\item[(b)] The case $q_{1}=1$ and $q_{2}=1$ (see Figure~\ref{star-star:fig2}) must be divided into all possibilities.
			
			If $r_1+r_2 \leq r_0$ then the number of positive and negative signs is the same. However, the vertices $v,w,u,z_1,z_2$ in Figure~\ref{star-star:fig2}  must be counted. We consider two cases:
			
			\noindent\textbf{Case 1:} \(r_1+r_2<r_0\)
			
			In this case, by Lemma~\ref{lemma-sequence} (see Figure~\ref{star-star:fig2}) $a_T(v)=b_1(r_1)<0$ and $a_T(w)=b_1(r_2)<0$.
			On the other hand $a_{T'}(z_1)=b_1(r_1+r_2)<0$, and  $a_{T'}(z_2)=b_2(r_1+r_2)>0$. Consequently, the transformation is proper, since $u$ can only decrease  $\sigma_\alpha$ by one in the worst case.

			\noindent\textbf{Case 2:} \(r_1+r_2= r_0\)
			
			In this case, if $r_1$ or $r_2$ are zero, then we assume, without loss of generality, $r_1=0$ and $r_2=r_0$. Therefore, $a_T(v_1) = x_1 < 0,$ $a_T(w) = 2$ (because $b_1(r_0)=0$) and  $a_T(u) = -\frac{(1-\alpha)^2}{2}<0.$
			
			On the other hand $a_{T'}(z_1)=2$ (because $b_1(r_1+r_2)=0$), and $a_{T'}(z_2)= -\frac{(1-\alpha)^2}{2}<0$. Thus $\sigma_\alpha$ does not decrease regardless of the sign of $a_{T'}(u)$.
			
			Finally, if $r_1>0$ and $r_2>0$ then $r_1<r_0$ and $r_2<r_0$, consequently $a_T(v) = b_1(r_1) < 0$ and $a_T(w) = b_1(r_2) < 0.$
			Again, $a_T(z_1)=2>0$, which shows that the transformation is proper regardless of the sign of $a_T(u)$ or $a_{T'}(u)$.
			\begin{figure}[ht!]
				\centering
				\includegraphics[width=0.6\textwidth]{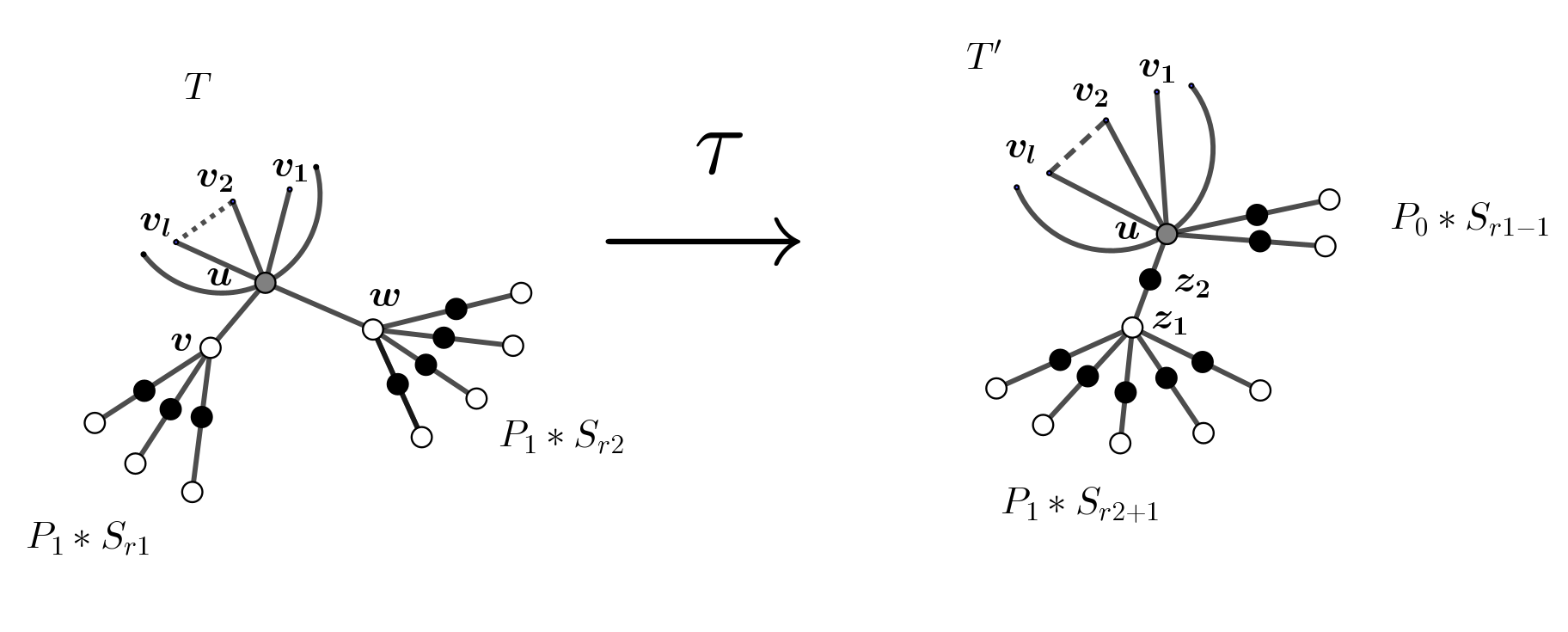}
				\vspace{1em}
				\includegraphics[width=0.5\textwidth]{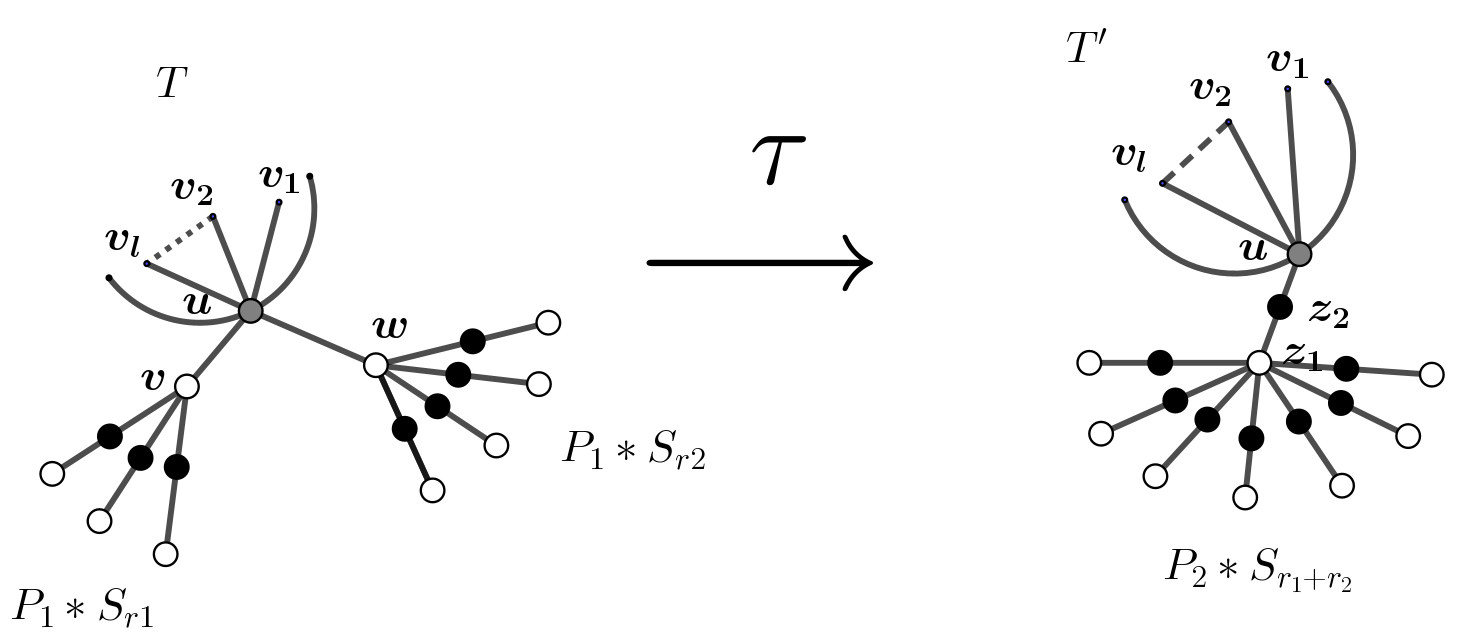}
				\caption{An example of a 1-1 Star-star transform.}
				\label{star-star:fig2}
			\end{figure}
			
			Now assume $r_1+r_2>r_0$ and $r_1\leq r_0, r_2\leq r_0$. We must again consider two cases:
			\[
			\begin{aligned}
				\textbf{Case 1:} \quad & r_0 \in \mathbb{N}_0.\\
				&\text{Obviously, we cannot have } r_1=0 \text{ or } r_2=0.\\
				&\text{Therefore, it is enough to consider the following three sub-cases:}
				\\[6pt]
				&\quad\textbf{Sub-case i:}\quad
				r_1<r_0 \text{ and } r_2<r_0.\\
				&\qquad \text{Then } 
				a_T(v)<0,\quad a_T(w)<0,\quad
				a_{T'}(z_1)>0,\quad a_{T'}(z_2)<0.\\
				&\qquad \text{Thus, } \tau \text{ is proper regardless of the sign of } u,\\
				&\qquad \text{since we obtain an additional positive sign.}
				\\[6pt]
				&\quad\textbf{Sub-case ii:}\quad
				r_1<r_0 \text{ and } r_2=r_0,\\
				&\qquad \text{or, equivalently, } r_1=r_0 \text{ and } r_2<r_0.\\
				&\qquad \text{Then }
				a_T(v)<0,\quad a_T(w)>0,\quad a_T(u)<0,\\
				&\qquad \text{and }
				a_{T'}(z_1)>0,\quad a_{T'}(z_2)<0.\\
				&\qquad \text{Thus, the transformation is proper regardless of the sign of } u
				\text{ in } T'.
				\\[6pt]
				&\quad\textbf{Sub-case iii:}\quad
				r_1=r_2=r_0.\\
				&\qquad \text{Then }
				a_T(v)=0,\quad a_T(w)>0,\quad a_T(u)<0.\\
				&\qquad \text{By the above reasoning, we conclude that } \tau
				\text{ is proper.}
				\\[8pt]
				\textbf{Case 2:}\quad
				& r_0 \notin \mathbb{N}.\\
				& \text{In this case, } r_1,r_2 \leq \lfloor r_0 \rfloor,
				\text{ and hence } r_1<r_0 \text{ and } r_2<r_0.\\
				& \text{Therefore, the reasoning from Sub-case 1-i applies, completing the proof.}
			\end{aligned}
			\]
			\item[(c)]
			The case where $r_1,r_2\leq\lfloor r_0 \rfloor$ and $r_1+r_2\leq \lfloor r_0 \rfloor$ for  $q_{1}=1$ and $q_{2}=0$ (see Figure~\ref{star-star:fig3})  is
			actually a particular application of several proper transformations that we
			call a Star-down transformation in Proposition~\ref{prop:stardown}. In each step,
			a $P_2$ from the $P_0\ast
			S_{r_2}$ is brought down to the path $P_1\ast S_{r_1}$. Thus, the Star-star
			transformation is proper.
			
			The case where $r_1,r_2\leq\lfloor r_0 \rfloor$, but $r_1+r_2> \lfloor r_0 \rfloor$ we can perform Star-down transformations to move a suitable number of
			$P_2$'s so that, after these transformations, $S_{r_2}$ is
			filled to $S_{\lfloor r_0 \rfloor}$, while the star $S_{r_1}$ is left
			with exactly $
			S_{r_1+r_2-\lfloor r_0 \rfloor}$. This completes the proof.
		\end{enumerate}

		\begin{figure}[ht!]
			\centering
			\includegraphics[width=0.5\textwidth]{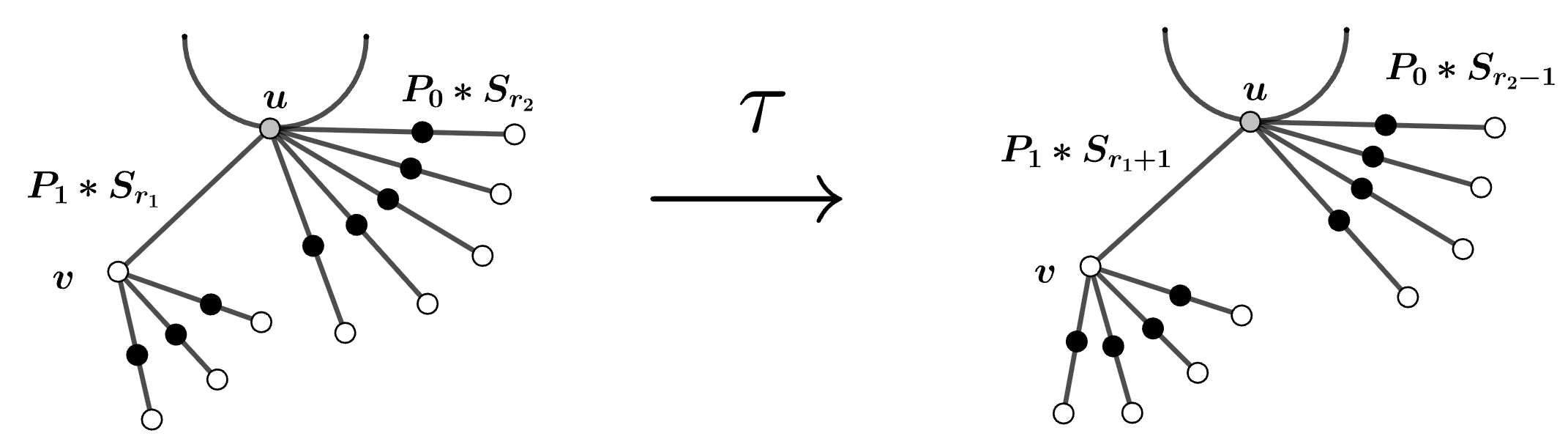}
			\vspace{1em}
			\includegraphics[width=0.5\textwidth]{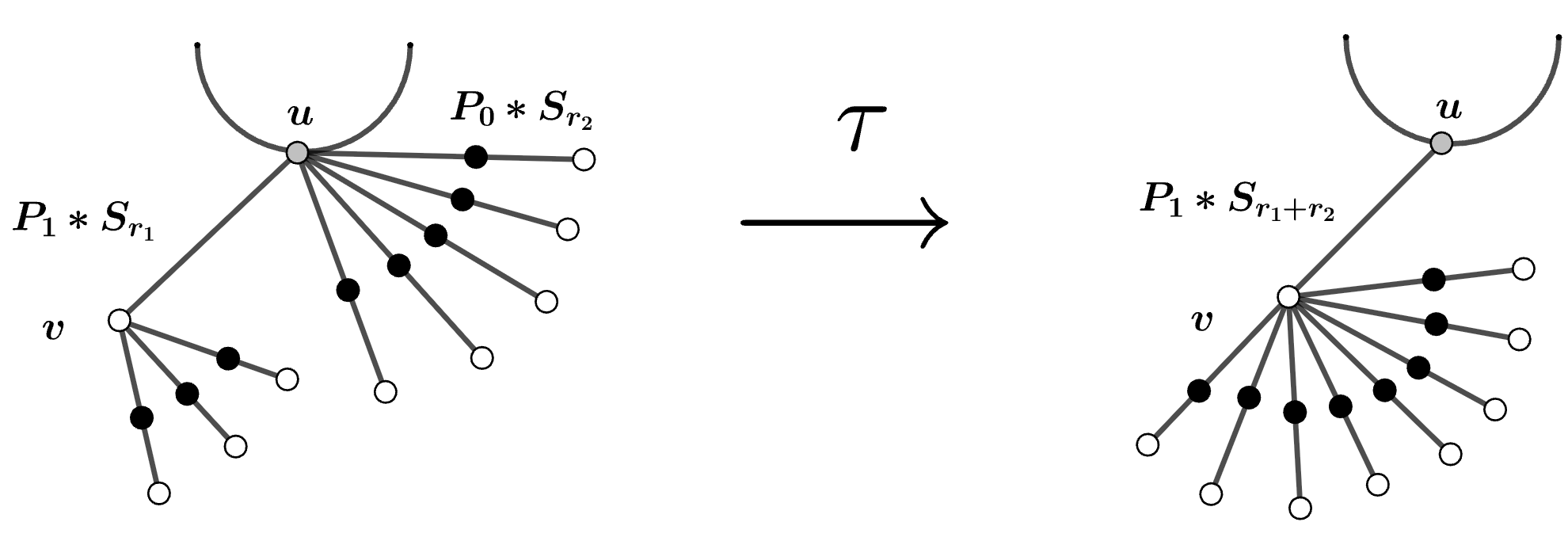}
			\caption{An example of a 1-0 Star-star transform.}
			\label{star-star:fig3}
		\end{figure}    
		
	\end{proof}

	\section{Prototype trees} \label{sec:prototype-trees}
	We now introduce some particular families of trees that satisfy the Conjecture~\ref{thm:main-conjecture}.
	\begin{definition}\label{def:prot}
		Let $j\geq1$ and $\theta\in\{0,1,2,3\}$. For $n=4j+\theta\geq7$,
		define the tree $T_\theta$ of order $n$ by
		\begin{enumerate}
			\item $T_0=u+P_0\ast S_{j-1}\oplus P_1\ast S_j$;
			\item $T_1=u+P_0\ast S_j\oplus P_0\ast S_j$;
			\item $T_2=u+P_0\ast S_j\oplus P_1\ast S_j$;
			\item $T_3=u+P_1\ast S_j\oplus P_1\ast S_j$.
		\end{enumerate}
		For $n=4j+3$, we also define the additional prototype
		$T_{3'}=u+P_0\ast S_j\oplus P_0\ast S_{j+1}$.
	\end{definition}
	Hence, if $T$ is properly transformed into $T_\theta$, then $\sigma_\alpha(T) \leq
	\sigma_\alpha(T_\theta)$. So in order to prove the conjecture, it remains to prove that $\sigma_\alpha(T_\theta) \leq \lfloor\frac{n}{2}\rfloor$ for each one. We now prove that $T_\theta$, in Definition~\ref{def:prot}, satisfies the
	equality in Theorem \ref{thr:main}, that is $\sigma_\alpha(T_\theta) =
	\lfloor\frac{n}{2}\rfloor$.
	
	The next theorem is similar to the corresponding result in \cite{Jacobs2021}; however, we must provide a new proof, since it depends on $\alpha$. We also notice that the introduction of the prototype $T_{3'}$ is necessary to use \cite[Theorem 7.2]{Jacobs2021} without a new proof, since in \cite{Jacobs2021} it is necessary to prove that the tree $T_{3'}$ can be properly transformed into $T_3$.
	\begin{theorem}\label{thr:Talpha}
		Let $j\geq1$ and $\theta\in\{0,1,2,3\}$. For $n=4j+\theta\geq7$, $0<\alpha\leq\frac12$, and
		$d_\alpha=\alpha(2-\frac{2}{n})$, the prototypes $T_\theta$ satisfy
		\begin{enumerate}
			\item[(a)] ${\displaystyle m_{T_0}(\alpha)(-\infty, d_\alpha] = 2j}$;
			\item[(b)] ${\displaystyle m_{T_1}(\alpha)(-\infty, d_\alpha] = 2j+1}$;
			\item[(c)] ${\displaystyle m_{T_2}(\alpha)(-\infty, d_\alpha] = 2j+1}$;
			\item[(d)] ${\displaystyle m_{T_3}(\alpha)(-\infty, d_\alpha] =m_{T_{3'}}(\alpha)(-\infty, d_\alpha] = 2j+2}$.
		\end{enumerate}
		In particular ${\displaystyle m_{T_\theta}(\alpha)(-\infty, d_\alpha] = \left\lceil
			\frac{n}{2} \right\rceil}$, or equivalently, ${\displaystyle\sigma_\alpha(T_\theta)
			= \left \lfloor\frac{n}{2} \right\rfloor}$.
	\end{theorem}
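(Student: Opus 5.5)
The plan is to compute the inertia of $A_\alpha(T_\theta)-d_\alpha I_n$ directly with $\operatorname{Diagonalize}(A_\alpha(T_\theta),-d_\alpha)$, rooting each prototype at $u$, and then apply Lemma~\ref{lemma-eigen}. The first step is the same for every prototype. Every ray $P_2$ of a sun contributes $x_1(\alpha)<0$ at its leaf and $x_2(\alpha)>0$ at its middle vertex (Lemma~\ref{lemma-sequence}(a)). So a prototype with $R$ rays in total already has exactly $R$ negative and $R$ positive entries coming from the rays.

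Next I would treat the centre $z$ of a branch $P_1\ast S_j$. By definition of $b_1$, its entry is $a(z)=b_1(j)$. Lemma~\ref{lemma-sequence}(c-3) gives $r_0>n/4\ge j$, since $n\ge 7$ and $\alpha\le 1/2$. Hence $b_1(j)<0$ by (c-4), and moreover $x_1\le b_1(j)<0$, so $-\alpha<b_1(j)<0$. With $c:=(1-\alpha)^2/x_2$, it then remains to compute the root entry. If $u$ has $s$ rays attached directly and $t\in\{0,1,2\}$ children of type $z$, then $\deg u=s+t$ and
\[
a(u)=b_1(s+t-1)-(1-t)c-\sum_{z}\frac{(1-\alpha)^2}{b_1(r_z)} ,
\]
where the sum runs over the $t$ centres $z$. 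Here I use the identity $\alpha(r+1)-d_\alpha-rc=b_1(r)$.

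For each prototype the counting target is as follows.
\begin{itemize}
\item $T_0$: there are $2j-1$ rays and one negative $z$, which gives $2j$ negative entries. So $u$ must be positive. Here $a(u)=b_1(j-1)-(1-\alpha)^2/b_1(j)$, and I would bound it by $b_1(j-1)\ge x_1>-\alpha$ together with $-(1-\alpha)^2/b_1(j)>(1-\alpha)^2/\alpha\ge\alpha$. The last inequality is exactly where $\alpha\le 1/2$ enters, as in the proof of Proposition~\ref{prop:stardown}.
\item $T_2$ and $T_3$: the centres $z$ supply one negative entry each. The sign of $u$ must come out positive, which requires an estimate of the same type, now with one or two terms $-(1-\alpha)^2/b_1(j)>(1-\alpha)^2/\alpha$.
\item $T_1$ and $T_{3'}$: all rays sit at $u$, and $a(u)=b_1(R-1)-c$ with $R=2j$ or $2j+1$. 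The count $2j+1$ for $T_1$ (respectively $2j+2$ for $T_{3'}$) forces $a(u)\le 0$.
\end{itemize}

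The main obstacle is the sign of $a(u)$ in the case where $u$ carries all rays. Here $R-1$ exceeds $r_0\approx n/4$, so Lemma~\ref{lemma-sequence}(c-4) no longer controls $b_1(R-1)$. I would first try to settle this sign algebraically: clear denominators in $b_1(R-1)-c$ using the explicit $x_1,x_2$, and study the resulting rational function in $\alpha$ on $(0,1/2]$ with $n=4j+1$ or $4j+3$, using monotonicity in $\alpha$ as was done for $r_0$.

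If this direct sign check fails, I would instead re-root at a ray middle vertex, or use interlacing. Deleting $u$ from $T_1$ leaves $2j$ disjoint copies of $P_2$, each contributing one eigenvalue below $d_\alpha$ (by the $x_1<0$ computation). Cauchy interlacing then yields at least $2j$ small eigenvalues, and one extra small eigenvalue has to be found, for instance $\lambda_n\le\alpha\delta\le d_\alpha$ as in Section~\ref{sec:pre}. I would handle $T_{3'}$ in the same way.

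Once all five counts are established, the identity $m_{T_\theta}(\alpha)(-\infty,d_\alpha]+\sigma_\alpha(T_\theta)=n$ gives $\sigma_\alpha(T_\theta)=\lfloor n/2\rfloor$.
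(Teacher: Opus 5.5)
Your framework is the paper's. You root each prototype at $u$, let every ray contribute $x_1<0$ and $x_2>0$, and let every centre $z$ of a $P_1\ast S_j$ contribute $b_1(j)<0$ (since $j\le n/4<r_0$). That reduces everything to the sign of $a(u)$. Your root formula is correct, and your argument for $T_0$, $T_2$, $T_3$ is sound.

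It is in fact sounder than the paper's. For $T_0$ and $T_2$ the paper discards the term $-(1-\alpha)^2/b_1(j)$ and then needs $b_1(j-1)\ge 0$, respectively $b_1(j)\ge 0$. Both are false because $j<r_0$; its chain $j-1\ge -x_1/m\ge -x_1/\alpha$ runs in the wrong direction. For $T_2$ the paper also writes $\alpha(2j+1)$ where $\alpha\deg_{T_2}(u)=\alpha(j+1)$. Your bounds $b_1(\cdot)>-\alpha$ and $-(1-\alpha)^2/b_1(j)>(1-\alpha)^2/\alpha\ge\alpha$ are exactly what makes these three cases work.

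The gap is $T_1$ and $T_{3'}$. There the sign of $a(u)$ is the whole content of the claim, and you leave it open. Your first-choice route closes it in one line. With all $R$ rays at $u$ and $n=2R+1$ one has $d_\alpha=4R\alpha/n$, hence
\[
a(u)=R\Bigl(\alpha-c-\frac{4\alpha}{n}\Bigr),\qquad
c=\frac{(1-\alpha)^2}{x_2}=\frac{\alpha n(n-2)(1-\alpha)^2}{2\alpha^2(n-2)+n^2(1-\alpha)^2}.
\]
So $a(u)<0$ iff $c>\alpha(n-4)/n$, iff $n^2(1-\alpha)^2>\alpha^2(n-2)(n-4)$. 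This holds because $(1-\alpha)^2\ge\alpha^2$ for $\alpha\le 1/2$. The computation handles $T_1$ ($R=2j$) and $T_{3'}$ ($R=2j+1$) at once. The paper does the same reduction for $T_1$ and a longer $P$, $Q$ factorization for $T_{3'}$.

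Your fallback would fail. Deleting $u$ leaves a principal submatrix with $R$ eigenvalues below $d_\alpha$ and $R$ above. Interlacing then gives $\lambda_1,\dots,\lambda_R>d_\alpha$ and $\lambda_{R+2},\dots,\lambda_n<d_\alpha$, and leaves exactly $\lambda_{R+1}$ undetermined. That undetermined eigenvalue is precisely the sign of $a(u)$. The bound $\lambda_n\le\alpha\delta$ concerns an eigenvalue already counted, so it supplies no extra small eigenvalue. Re-rooting only moves the undetermined sign to another vertex.
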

	\begin{proof}
		\begin{figure}[ht!]
			\centering     \includegraphics[width=0.85\linewidth]{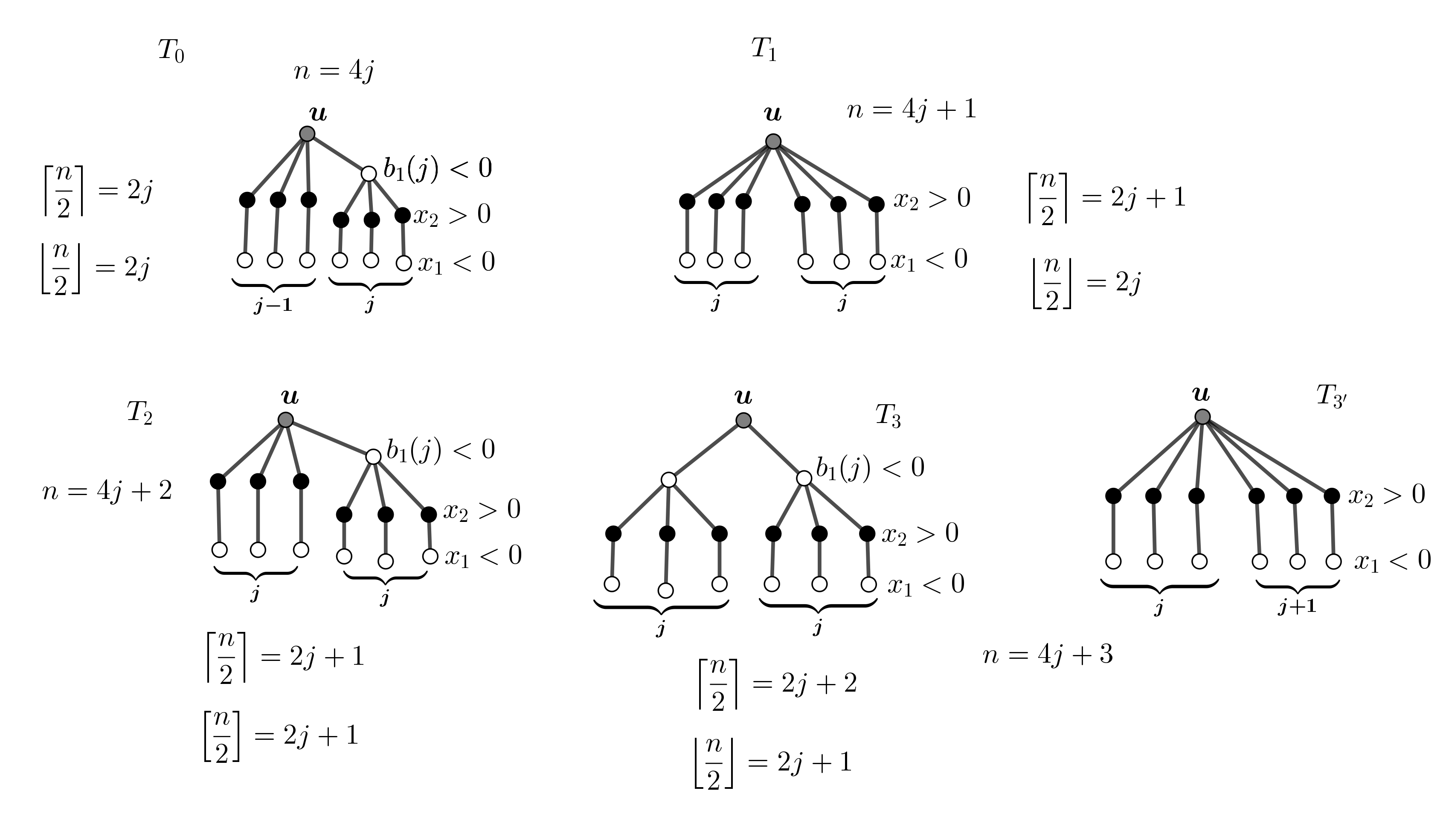}        \caption{Prototype trees}
			\label{fig:prototypes}
		\end{figure}
		In each case, the sign of each output of $\operatorname{Diagonalize}(A_\alpha(T_\theta),-d_\alpha)$, except at the root $u$, is predetermined by Lemma~\ref{lemma-sequence} (c-4) and the fact that $\lfloor \frac{n}{4} \rfloor < r_0$.
		\begin{enumerate}
			\item  For $T_0$, we know that $b_1(j)<0$. So, excluding $u$, we have $2j$ negative outputs and $2j-1$ positive outputs (according to Figure~\ref{fig:prototypes}). Therefore, we need to show that $a_{T_0}(u)>0$, to obtain $\sigma_\alpha(T_0)=2j$. Defining
			\[g_0(\alpha, j):= 
			a_{T_0}(u)= \alpha\,j-d_\alpha -
			{\frac { \left( 1-\alpha \right) ^{2}}{b_{{1}} \left( j \right) }}-
			{\frac { \left( 1-\alpha \right) ^{2} \left( j-1 \right) }{x_{{2}}}},
			\] 
			on $[0,1/2]\times[2,\infty)$, and we must prove $g_0(\alpha, j)>0$. Note that 
			\[g_0(\alpha, j)= -{\frac { \left( 1-\alpha \right) ^{2}}{b_{{1}} \left( j \right) }} +  \alpha (j-1) +(\alpha -d_\alpha ) - {\frac { \left( 1-\alpha \right) ^{2} \left( j-1 \right) }{x_{{2}}}} \]
			Since $-{\frac { \left( 1-\alpha \right) ^{2}}{b_{{1}} \left( j \right) }}>0$ we only need to show that 
			\[\left[ \alpha - \frac{(1-\alpha)^{2}}{x_{2}} \right]  (j-1) + x_1(\alpha) \geq 0, \]
			or equivalently,
			\[j-1 \geq \frac{-x_1(\alpha)}{\alpha -\frac{(1-\alpha)^{2}}{x_{2}}} \geq \frac{-x_1(\alpha)}{\alpha} = \frac{-\alpha (-1 + \frac{2}{4j})}{\alpha} = 1 - \frac{1}{2j} \geq 0,\]
			that is, $j \geq 1$, which always holds.
			
			\item For $T_1$, we have (see Figure~\ref{fig:prototypes}), exactly $2j$ positive and $2j$ negative outputs.
			It remains to show $a_{T_1}(u)<0$. We define
			\[g_1(\alpha, j):= a_{T_1}(u)=2\,\alpha\,j-d_\alpha -2\,{\frac {j \left( 1-\alpha \right) ^{2}}{x_{{2}} }}\] 
			on the rectangle $[0,1/2]\times[2,\infty)$.
			
			Since \(n=4j+1\), we have $d_\alpha
			=\alpha\left(2-\frac{2}{4j+1}\right)
			=\frac{8\alpha j}{4j+1}.$
			Therefore,
			\[
			\begin{aligned}
				g_1(\alpha,j)<0
				&\iff
				2\alpha j
				-2\frac{j(1-\alpha)^2}{x_2}
				<
				\frac{8\alpha j}{4j+1}\\
				&\iff
				2j\left(
				\alpha-\frac{(1-\alpha)^2}{x_2}
				\right)
				<
				\frac{8\alpha j}{4j+1}.
			\end{aligned}
			\]
			Since \(x_2>0\), it is enough to prove that $\frac{(1-\alpha)^2}{x_2} > \frac{\alpha(4j-3)}{4j+1}.$   Using $x_2=\frac{2\alpha}{4j+1} +\frac{(4j+1)(1-\alpha)^2}{\alpha(4j-1)},$ we obtain $(1-\alpha)^2>\frac{2\alpha^2(4j-3)}{(4j+1)^2}+\frac{(4j-3)(1-\alpha)^2}{4j-1}.$
			Thus, $\frac{(1-\alpha)^2}{4j-1}>\frac{\alpha^2(4j-3)}{(4j+1)^2}.$ For \(0<\alpha\leq\frac12\), we have $(1-\alpha)^2\geq\alpha^2$ and $\frac{1}{4j-1} > \frac{4j-3}{(4j+1)^2}.$ 
			Consequently, $\frac{(1-\alpha)^2}{4j-1} > \frac{\alpha^2(4j-3)}{(4j+1)^2},$  which proves that $g_1(\alpha,j)<0.$ 
			
			\item For $T_2$, we define
			\[g_2(\alpha, j):= a_{T_2}(u)=\alpha\, \left( 2\,j+1 \right) -d_\alpha -{\frac { \left( 1-\alpha \right) ^{2}}{b_{{1}}
					\left( j \right) }}-{\frac {j \left( 1-\alpha \right) ^{2}}{x_{{2}}}}
			\]   on the rectangle $[0,1/2]\times[2,\infty)$. We must prove $g_2(\alpha, j)>0$. 
			
			Note that 
			\[g_2(\alpha, j)= \left[\alpha j -{\frac { \left( 1-\alpha \right) ^{2}}{b_{{1}}
					\left( j \right) }} \right] +(\alpha -d_\alpha)+ \alpha j -{\frac {j \left( 1-\alpha \right) ^{2}}{x_{{2}}}}\]
			Since $\alpha j  -{\frac { \left( 1-\alpha \right) ^{2}}{b_{{1}} \left( j \right) }}>0$ we only need to show that $x_1(\alpha) + \alpha j - {\frac { \left( 1-\alpha \right) ^{2} j }{x_{{2}}}} \geq 0$, or equivalently
			\[j \geq \frac{-x_1(\alpha)}{\alpha -\frac{(1-\alpha)^{2}}{x_{2}}} \geq \frac{-x_1(\alpha)}{\alpha} = \frac{-\alpha (-1 + \frac{2}{4j})}{\alpha} = 1 - \frac{1}{2j} \geq 0\]
			that is $j \geq 1$, which always holds.
			
			\item  For $T_3$ and $T_{3'}$, due to the balance of positive and negative outputs, we only need to analyze the output at $u$. Defining the function   
			\[g_3(\alpha, j):=a_{T_3}(u)= 2\,\alpha-d_\alpha -2
			\,{\frac { \left( 1-\alpha \right) ^{2}}{b_{{1}} \left( j \right) }}
			\]  in the interval $[0,1/2]\times [1, \infty)$. We must show that $g_3(\alpha,j)>0$.
			Since $n=4j+3$, we have $d_\alpha
			=
			\alpha\left(2-\frac{2}{4j+3}\right).$
			Consequently, $2\alpha-d_\alpha
			=
			\frac{2\alpha}{4j+3}.$
			Since $b_1(j)<0$, we have $-2\frac{(1-\alpha)^2}{b_1(j)}>0.$ 
			Moreover, $\frac{2\alpha}{4j+3}>0$
			for $\alpha>0$ and $j\geq 1$. Therefore,
			$g_3(\alpha,j)>0$. 
			
			The same holds for \[g_{3'}(\alpha, j):=a_{T_{3'}}(u)= \alpha\, \left( 2\,j+1 \right) -d_\alpha -{\frac { \left( 2\,j+1 \right)  \left( 1-
					\alpha \right) ^{2}}{x_{{2}}}}
			\]
			
			For $n=4j+3$,  $d_\alpha = \alpha\left(2-\frac{2}{4j+3}\right),$
			and we obtain
			\[
			\begin{aligned}
				\alpha(2j+1)-d_\alpha
				&=
				\alpha(2j+1)
				-\alpha\left(2-\frac{2}{4j+3}\right)=
				\frac{\alpha(8j^2+2j-1)}{4j+3}.
			\end{aligned}
			\]
			Thus,
			\[
			g_{3'}(\alpha,j)
			=
			\frac{\alpha(8j^2+2j-1)}{4j+3}
			-\frac{(2j+1)(1-\alpha)^2}{x_2}.
			\]
			
			Therefore,
			\[
			\begin{aligned}
				g_{3'}(\alpha,j)
				&=
				\frac{\alpha(8j^2+2j-1)}{4j+3}-
				\frac{
					\alpha(2j+1)(4j+1)(4j+3)(1-\alpha)^2
				}{
					2\alpha^2(4j+1)
					+(4j+3)^2(1-\alpha)^2
				}.
			\end{aligned}
			\]
			After simplification, we obtain
			\[
			g_{3'}(\alpha,j)
			=
			-\frac{
				2\alpha(2j+1)P(\alpha,j)
			}{
				(4j+3)Q(\alpha,j)
			},
			\]
			where
			\[
			P(\alpha,j)
			=
			24\alpha^2j+10\alpha^2
			-32\alpha j^2
			-48\alpha j
			-18\alpha
			+16j^2+24j+9,
			\]
			and
			\[
			Q(\alpha,j)
			=16\alpha^2j^2+32\alpha^2j+11\alpha^2
			-32\alpha j^2-48\alpha j-18\alpha+16j^2+24j+9.
			\]
			Thus, it remains to prove that $P(\alpha,j)>0$ and $Q(\alpha,j)>0$.
			
			Let $t=1-\alpha$,
			since $0<\alpha\leq\frac12$, we have 
			$ t\geq\frac12.$ $P$ can be written as
			\[
			P(t,j)
			=
			16j^2(2t-1)
			+
			2t(12j+5t-1)+1.
			\]
			Since $t\geq\frac12$, $16j^2(2t-1)\geq0$, and $2t(12j+5t-1)+1>0$. Therefore, $P(\alpha,j)>0$. Moreover, after substituting $\alpha=1-t$,
			\[
			Q(t,j)=16j^2t^2+16jt(2t-1)+8j+11t^2-4t+2>0,
			\]
			for $t\geq\frac12$ and $j\geq1$. Hence $g_{3'}(\alpha,j)<0$. 
		\end{enumerate}
	\end{proof}

	\section{Reduction: starlike vertices} \label{sec:reduction}
	The next lemma is identical to \cite{Jacobs2021} since it does not depend on the matrix we are using:
	\begin{lemma}\cite[Lemma 6.2]{Jacobs2021}\label{lem:reduction-starlike-vertex}
		Let $T$ be a tree with $n \ge 7$ vertices and $k \ge 2$ starlike vertices. Then there exists a starlike vertex $u$ such that $w(u) \le 2\lfloor \frac{n}{4} \rfloor$.
	\end{lemma}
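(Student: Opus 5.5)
The argument is purely combinatorial and does not use the matrix at all. I will follow \cite[Lemma 6.2]{Jacobs2021} and use a counting/pigeonhole argument on the disjoint vertex sets hanging at the starlike vertices.

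\textbf{Step 1 (disjointness).} Let $u_1,\dots,u_k$ be the starlike vertices, $k\ge 2$, and let $G(u_i)$ be the set of vertices in the generalized pendant paths attached at $u_i$, excluding $u_i$. By definition, $|G(u_i)|=w(u_i)$. Each gpp $P_q\ast S_r$ at $u_i$ is a maximal branch at $u_i$ containing no vertex of degree $\ge 3$ other than its center $v$, and $v$ carries only pendant $P_2$'s. Such a branch therefore contains no other starlike vertex. It also cannot lie inside a gpp of another starlike vertex $u_j$: a gpp at $u_j$ cannot contain the vertex $u_i$ (of degree $\ge 3$ with at least two gpps) unless $u_i=v$ is a sun center, and in that case $u_i$ has only $P_2$ rays and no other branch, which contradicts $u_i$ having degree $\ge 3$ plus an outside branch. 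Hence the sets $G(u_i)$ are pairwise disjoint, and also disjoint from $\{u_1,\dots,u_k\}$.

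\textbf{Step 2 (counting).} We get $\sum_i w(u_i)+k\le n$, so the minimum weight satisfies $w(u_1)\le (n-k)/k\le (n-2)/2$. This is not yet strong enough. I therefore refine the count using the fact that the tree is connected. Take the two starlike vertices $u_1,u_2$ of smallest weight, or more simply argue with the whole set. Each $u_i$, besides its gpps, has at least one further branch containing another starlike vertex. Consider the subtree spanned by the starlike vertices. The leaves of this subtree are starlike vertices $u$ with exactly one non-gpp branch. I expect to exploit that at least two such ``extremal'' starlike vertices $u,u'$ exist, with $G(u)$, $G(u')$, $\{u,u'\}$ disjoint. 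This gives $w(u)+w(u')\le n-2$, and hence $\min(w(u),w(u'))\le \lfloor (n-2)/2\rfloor$.

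\textbf{Step 3 (parity and the bound $2\lfloor n/4\rfloor$), the main obstacle.} From $\min\le (n-2)/2$ I still need $w\le 2\lfloor n/4\rfloor$. For $n\equiv 2,3\pmod 4$, $\lfloor (n-2)/2\rfloor\le 2\lfloor n/4\rfloor$ holds directly. For $n\equiv 0,1$ this bound fails by one, so I must gain a vertex or use parity. Since $u$ has degree $\ge 3$ with at least two gpps, there is the further non-gpp branch through which $u$ connects to $u'$. If $u$ and $u'$ are adjacent with nothing else, then $n=w(u)+w(u')+2$. In that case, if both weights equal $(n-2)/2$, the case $n\equiv 0$ gives $(n-2)/2=2\lfloor n/4\rfloor-1$, which is fine, and the case $n\equiv 1$ gives $(n-3)/2$ at most, after rounding, also fine. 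I will check these residues carefully. The remaining subtle point is that weights here count vertices of $P_q\ast S_r$ as $q+2r$, which must match the cardinality used in Step 1. I will verify this against the $(P_q,S_r)$ representation convention produced by \texttt{InitiateRepresentation}, and check that ties are handled by taking the minimum.
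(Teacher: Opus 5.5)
Your strategy is the natural one: show that distinct starlike vertices have disjoint gpp vertex sets, then count. The paper gives no proof of its own and simply defers to the combinatorial lemma of Jacobs, Oliveira and Trevisan. However, the one nontrivial step, Step 1, is not established by your argument.

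\textbf{The sun-center case in Step 1.} Your dismissal of this case is wrong. Suppose $u_i$ is the center $v$ of a gpp $P_q\ast S_r$ at $u_j$, with $q\ge 1$ and $r\ge 2$. Then $u_i$ has degree $r+1\ge 3$, it has $r$ pendant $P_2$'s, and it has exactly one other branch, the one leading to $u_j$. That branch is precisely the ``outside branch'' you require, so nothing is contradicted.

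In fact disjointness cannot be derived from degrees and shapes at all. Let $u$ have two pendant edges and a neighbor $v$ carrying four pendant $P_2$'s, so $n=12$. Read literally, $P_1\ast S_4$ is a gpp at $u$ and each ray is a gpp $P_0\ast S_1$ at $v$. Then $u$ and $v$ are the only starlike vertices, with $w(u)=11$ and $w(v)=8$, both exceeding $2\lfloor 12/4\rfloor=6$.

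What rules this out is the $(P_q,S_r)$ representation, which is why the paper stresses that starlikeness depends on it. In this example, \texttt{InitiateRepresentation} gives $u$ only its two pendant edges, so $w(u)=2$. You must take disjointness from the representation itself. Concretely, check as an invariant that the gpps of distinct starlike vertices never share vertices. This holds initially because pendant paths are pairwise disjoint. It is preserved by \texttt{ReduceStarVertex}, which only replaces the gpps at one vertex by a single gpp of the same size, possibly passed on to the next branching vertex. Given this, $u_i\notin G(u_j)$ follows as well. A starlike vertex inside a gpp could only be its sun center, and then its own gpps would lie in $G(u_j)$.

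\textbf{Steps 2 and 3.} These rest on an arithmetic slip: your first bound already finishes the proof. Write $n=4j+\theta$. Then $\lfloor (n-2)/2\rfloor=2j-1$ for $\theta\in\{0,1\}$ and $\lfloor (n-2)/2\rfloor=2j$ for $\theta\in\{2,3\}$. Hence $w(u_1)\le\lfloor (n-2)/2\rfloor\le 2\lfloor n/4\rfloor$ for every $n$. The slack occurs precisely in the residues $0,1$, where you claimed the bound fails by one.

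Equivalently, suppose two starlike vertices both had weight at least $2\lfloor n/4\rfloor+1$. Disjointness would then give $n\ge 2+2(2\lfloor n/4\rfloor+1)=4\lfloor n/4\rfloor+4\ge n+1$, a contradiction. The extremal-vertex refinement and the adjacent-case analysis can therefore be dropped.
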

	
	The next theorem is proved exactly in the same way as in \cite{Jacobs2021} using the fact that by Lemma~\ref{lemma-sequence}, the inequality $ \lfloor \frac{n}{4}\rfloor\leq \frac{n}{4}<r_0$ and the proper transformations Star-Down and Star-Star are now adapted to stars $S_r$ with $r \leq r_0$ (see Proposition~\ref{prop:starup} and Proposition~\ref{prop:stardown-transf}).
	\begin{theorem}\cite[Theorem 6.1]{Jacobs2021} \label{annihilation}
		Consider a tree $T$ in $(P_q, S_r)$ representation and $u$ a starlike vertex
		with $\ell \geq 2$ generalized pendant paths, or $P(u)= P_{q_1}\ast S_{r_1}
		\oplus \cdots \oplus P_{q_\ell }\ast S_{r_\ell}$. If $w(u)\leq2\lfloor
		r_0\rfloor $ then we can properly transform $T$ to $T^\prime$ obtaining
		$$P_{q_1}\ast S_{r_1} \oplus \cdots \oplus P_{q_\ell}\ast S_{r_\ell} \Rightarrow  P_{q' }\ast S_{r'}$$
		where
		\[  \left\{
		\begin{array}{l}
			q'\equiv \sum q_i \pmod 2 \in \{0,1\} \\
			r' =\frac{w(u) -q'}{2} \leq \lfloor r_0\rfloor
		\end{array}
		\right.
		\]
	\end{theorem}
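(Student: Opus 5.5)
The plan is to follow the combinatorial scheme of \cite[Theorem 6.1]{Jacobs2021}: we never recompute $\operatorname{Diagonalize}$ on a whole tree. Instead we compose finitely many proper transformations, namely the Star-up transform of Proposition~\ref{prop:starup}, the Star-down transform of Proposition~\ref{prop:stardown} and the Star-star transforms of Proposition~\ref{prop:stardown-transf}. Each one is applied only locally at $u$, and each requires its stars to satisfy $r\le r_0$, or $r\le\lfloor r_0\rfloor$ where integrality is needed. Since a composition of proper transformations is proper, it suffices to exhibit such a chain ending at $P_{q'}\ast S_{r'}$. The hypothesis $w(u)\le 2\lfloor r_0\rfloor$ is exactly what keeps every intermediate star within the range where those propositions apply; in the algorithm it is supplied by Lemma~\ref{lem:reduction-starlike-vertex} together with $\lfloor n/4\rfloor<r_0$ from Lemma~\ref{lemma-sequence}(c-3).

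\emph{Step 1 (normalization).} For each gpp $P_{q_i}\ast S_{r_i}$ with $q_i\ge 2$, apply Star-up repeatedly, $P_{q}\ast S_r\to P_{q-2}\ast S_{r+1}$, until $q_i\in\{0,1\}$. Each application is proper as long as the current star has size at most $r_0$. This holds because the star size is always at most $(q_i+2r_i)/2\le w(u)/2\le\lfloor r_0\rfloor$. The parities of the $q_i$, and the weight $w(u)$, are unchanged, so afterwards every $q_i\in\{0,1\}$ and $\sum q_i \pmod 2$ is unchanged.

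\emph{Step 2 (merging).} We merge the gpps two at a time with Star-star. All $P_0$-branches are first gathered into a single $P_0\ast S_s$ using the trivial case (a). Pairs of $P_1$-branches are then combined by case (b) into $P_2\ast S_{r_1+r_2}$, since $r_1+r_2\le w(u)/2\le\lfloor r_0\rfloor$ so the first alternative occurs. A further Star-up turns this into $P_0\ast S_{r_1+r_2+1}$, which is absorbed into the $P_0$-branch. Iterating leaves either only a $P_0$-branch ($q'=0$) or one $P_1$-branch and one $P_0$-branch. In the latter case, case (c) produces a single $P_1\ast S_{r_1+r_2}$ ($q'=1$). Throughout, the star size is bounded by $w(u)/2\le\lfloor r_0\rfloor$, so the second (overflow) alternatives of (b) and (c) are never needed. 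The weight is preserved, which gives $r'=(w(u)-q')/2\le\lfloor r_0\rfloor$.

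The main obstacle is the bookkeeping of the hypotheses of the local propositions along the whole chain. Proposition~\ref{prop:starup} needs $r\le r_0$ at the moment of application, which our bound gives. Proposition~\ref{prop:stardown-transf}(b),(c) needs $r_1,r_2\le\lfloor r_0\rfloor$, which requires care when $r_0\in\mathbb{N}$ and a star equals $r_0$, because then $b_1=0$ triggers the zero branch of $\operatorname{Diagonalize}$. That branch is already handled inside those propositions, so we only check that the parameters stay in range. We also need that these transformations remain proper when $u$ has further branches: the propositions were stated for an arbitrary rest of the tree at $u$, since they treat the children $v_j$ of $u$ through the two subcases $a_T(v_j)=0$ or $\neq 0$. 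Finally, if $\deg(u)$ drops to $2$ after merging, the gpp is simply absorbed into the nearest branching vertex as in \texttt{ReduceStarVertex}. This is only a relabeling of the representation and does not change the tree.
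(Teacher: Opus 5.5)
Your proposal is correct and is essentially the argument the paper intends. The paper gives no separate proof and defers to \cite[Theorem 6.1]{Jacobs2021}, whose scheme is exactly yours: first normalize with Star-up, then merge pairwise with the Star-star transforms (Star-down entering through case (c)), now run with every star bounded by $w(u)/2\le\lfloor r_0\rfloor$ so that Propositions~\ref{prop:starup}, \ref{prop:stardown} and \ref{prop:stardown-transf} apply at each step. Your bookkeeping of these hypotheses is right, and in fact two merged gpps already force $r_1+r_2\le\lfloor r_0\rfloor-1$, so neither the overflow alternatives nor the $b_1=0$ branches are ever triggered.
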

	
	The reasoning in the next theorem is similar, but not identical to \cite{Jacobs2021}. However, the introduction of the new prototype $T'_3$ keeps only the combinatorial part requiring no adaptation. More precisely, in \cite[Theorem 7.2]{Jacobs2021} one observes that if there are no starlike vertices, then, using only the proper transformations, one can reduce $T$ to $T_0, T_1, T_2$ or $T_3$, except for one case when $n \equiv 3 \mod 4$ where one obtains a tree $T'_3$, and an additional proof is required to properly transform it in $T_3$. In our case, we avoid this by including this tree as a prototype and checking that it satisfies the conjecture. The case of one starlike vertex was proved in \cite[Theorem 7.3]{Jacobs2021} using exclusively the proper transformations and combinatorial reasoning. Therefore, no adaptation is necessary when moving from the Laplacian to the $A_\alpha$ matrix. Summarizing these considerations, we state the following key result:
	\begin{theorem} \label{thm:zero-one} Let $T$ be a tree of order $n$ having no starlike vertices or exactly one starlike vertex. Then $T$ can be properly transformed into $T_\theta, ~\theta \in \{0,1,2,3\}$ according to $n\equiv\theta\pmod4$.
	\end{theorem}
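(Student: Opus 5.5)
The plan is to follow the combinatorial reduction of \cite[Theorems 7.2 and 7.3]{Jacobs2021}. Wherever that argument uses a Laplacian-specific proper transformation, we substitute the $A_\alpha$ version from Section~\ref{sec:proper}. The key parameter check is that every star produced along the way has size at most $\lfloor n/4\rfloor$. By Lemma~\ref{lemma-sequence}(c-3) (for $n\ge 7$, $0<\alpha\le 1/2$), we have $\lfloor n/4\rfloor<r_0$, so all hypotheses $r\le \lfloor r_0\rfloor$ in Propositions~\ref{prop:starup}, \ref{prop:stardown} and \ref{prop:stardown-transf} are satisfied. The case $n\le 6$ is excluded, since it is covered by Corollary~\ref{cor:conjecture_small_n}.

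\textbf{No starlike vertex.} First I will show that a tree with no starlike vertex is a path, or has a single vertex of degree $\ge 3$ whose branches are all gpps. Lemma~\ref{lemma-starlike} gives a vertex of degree $\ge 3$ with two pendant paths; with no starlike vertex, the structure collapses to $u$ with gpps $P_{q_i}\ast S_{r_i}$ and nothing else. Next I apply Star-up (Proposition~\ref{prop:starup}) repeatedly to reduce every $q_i$ to $0$ or $1$, staying within the bound $r_i\le r_0$. When a path has length $\ge 2$ but its star already has size $r_0$, the excess is pushed into another branch instead. Then I apply Star-star (Proposition~\ref{prop:stardown-transf}) in three stages:
\begin{enumerate}
\item[(i)] merge pairs of $P_1$-branches into $P_2\ast S$ branches, and Star-up these into $P_0$-stars;
\item[(ii)] use the trivial $0$-$0$ rearrangement to balance the $P_0$-stars;
\item[(iii)] use the $1$-$0$ transform to absorb $P_0$-stars into a remaining $P_1$-branch.
\end{enumerate}
The end state is $u$ with at most two gpps of sizes $\approx n/4$. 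Which shape results is forced by the parity of $n-1=\sum (q_i+2r_i)$ and by $n \bmod 4$: it is $T_0,T_1,T_2,T_3$, or $T_{3'}$ when $n\equiv 3\pmod 4$ and both branches have $q=0$. Since $T_{3'}$ is already a prototype, Theorem~\ref{thr:Talpha}(d) covers it, so no further transformation from $T_{3'}$ to $T_3$ is needed.

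\textbf{Exactly one starlike vertex.} Here $u$ carries at least two gpps plus one further branch $B$, which itself contains no starlike vertex. I will argue as in \cite[Theorem 7.3]{Jacobs2021}. Process $B$ from its far end, using Star-up and Star-down at the degree-$\ge 3$ vertices inside $B$, so that its pendant $P_2$'s migrate toward $u$. This turns $B$ into a gpp and reduces to the previous case. At each step, the vertex being modified sees only stars of size $\le \lfloor n/4\rfloor$.

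\textbf{Main obstacle.} The hard step is the bookkeeping guaranteeing that no intermediate star exceeds $r_0$. The Star-down and Star-up transforms are proven only for $r\le r_0$, and Example~\ref{ex:counter-example-individual} shows what fails otherwise. I would first try to prove the invariant ``every star has size $\le \lfloor n/4\rfloor$'' by induction on the number of moves, using that the total weight is $n-1$ and that two stars each exceeding $n/4$ would already carry more than $n$ vertices. Where this fails near the final step, I would cap stars at $\lfloor r_0\rfloor$ using the second alternatives in Proposition~\ref{prop:stardown-transf}(b),(c).
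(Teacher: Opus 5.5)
Your top-level plan matches the paper's, and the paper offers nothing beyond it. It defers to \cite[Theorems 7.2 and 7.3]{Jacobs2021}. It notes that the $A_\alpha$ versions of Star-up, Star-down and Star-star hold for stars of size at most $r_0$, with $r_0>n/4$. It also declares $T_{3'}$ a prototype, which removes the one step ($T_{3'}\to T_3$) that needed more than combinatorics.

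Where you go beyond that citation, your sketch has a genuine gap, exactly where the combinatorial work lies. The task is to reduce long gpps to $q\in\{0,1\}$ while keeping the star on each $q\ge1$ branch at most $j=\lfloor n/4\rfloor$, as $T_0$ and $T_2$ require. Star-up only enlarges the star of the gpp it acts on. Star-down only enlarges a $P_1$-star, and Star-star (c) is a composite of Star-downs. So nothing in Section~\ref{sec:proper} moves rays out of a $P_q\ast S_r$ with $q\ge2$, or shortens its path, except by growing that very star. Your step ``the excess is pushed into another branch'' therefore has no justification.

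Moreover, with the hub fixed, a lone $P_1$-branch carrying more than $j$ rays can never be shrunk. This also defeats your fallback of capping at $\lfloor r_0\rfloor$, since for $\alpha<1/2$ the value $\lfloor r_0\rfloor$ may greatly exceed $j$.

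Concretely, take $n=10$ and $\alpha=1/2$, so $j=2$ and $r_0\approx2.58$. Let $u$ have two leaves and a pendant path of length $7$; $u$ is the only starlike vertex.
\begin{enumerate}
\item[(i)] Star-up turns the path into $P_1\ast S_3$.
\item[(ii)] Merging the two leaves (Star-star (b), then Star-up) gives $u+P_0\ast S_1\oplus P_1\ast S_3$. This is not a prototype, and no proven move at $u$ applies to it.
\item[(iii)] If you instead stop at $P_3\ast S_2$ to respect the bound, the only move that shortens the remaining path is that same Star-up.
\end{enumerate}
Your counting remark shows only that at most one star exceeds $n/4$. It does not show that this one star cannot overshoot.

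The missing ingredient is a rule for choosing, and changing, the hub. In the example, apply Star-star (b) with $r_1=r_2=0$ to the two leaves, so that $T$ becomes $P_n$. Then re-center at the vertex that splits $P_n$ into pendant paths whose Star-up reductions are exactly the two gpps of $T_\theta$. For $n=10$ the lengths are $4$ and $5$, giving $T_2$; for $n=4j$ they are $2j-2$ and $2j+1$, giving $T_0$. In both cases no star with more than $j-1$ rays is ever Star-upped. (Here, re-rooting at the centre of the $S_3$ and doing one Star-down also works.) An argument of this kind, or an explicit appeal to the combinatorics of \cite{Jacobs2021} as the paper makes, has to replace your ``main obstacle'' paragraph.

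Two smaller points. First, the size bound is irrelevant for $q=0$ stars, which rearrangement (a) splits freely; your own end state $T_{3'}$ carries an $S_{j+1}$. The invariant should therefore be stated only for stars on branches with $q\ge1$. Second, by the paper's definition a vertex of degree $\ge3$ with two or more gpps is starlike. So your ``no starlike vertex'' paragraph actually treats the one-starlike case.
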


	We are now able to prove our main result.
	\begin{proof}(Of  Theorem~\ref{thr:main}) 
		For $n\leq 6$ the proof follows by examining every tree individually. This is done in Corollary~\ref{cor:conjecture_small_n}.
		
		For $n\geq7$, we assume that a tree has $n \geq 7$ vertices and consider $A_\alpha$ for $0<\alpha\leq \frac{1}{2}$ (we already checked for $\alpha=0$). Then, we start the algorithm \texttt{Transform}$(T)$ (see Figure~\ref{fig:transform}). In turn, it calls the algorithm \texttt{InitiateRepresentation}($T$)(see Figure~\ref{fig:init-rep}), orders the starlike vertices of $T$ from the smallest weight $u_1$ to the largest weight $u_k$, and then calls the algorithm \texttt{ReduceStarVertex}($T,u_1$)(see Figure~\ref{red-star}) to convert all gpp's at $u_1$ into a single one.
		
		Provided $k\geq 2$, Lemma~\ref{lem:reduction-starlike-vertex} shows that $w(u_1) \le 2\lfloor \frac{n}{4} \rfloor$. In turn, this allows us to apply Theorem~\ref{annihilation}. This procedure does not necessarily decrease the number of starlike vertices, but eventually increases the weight of the lighter one. Therefore, eventually we must have no starlike vertices or exactly one starlike vertex, otherwise we will get a contradiction with $k\geq 2$ and $w(u_1) > 2\lfloor \frac{n}{4} \rfloor$.
		
		Thus, we must consider both situations and apply Theorem~\ref{thm:zero-one} to properly transform $T$ into one of the prototype trees $T_\theta$. By Theorem~\ref{thr:Talpha}, every prototype tree $T_\theta$ satisfies  ${\displaystyle\sigma_\alpha(T_\theta)
			= \left \lfloor\frac{n}{2} \right\rfloor}$. This concludes our proof.
	\end{proof}
	
	\section{Further generalizations}\label{sec:further}
	In the same way that the $A_\alpha$ matrix combines the Laplacian and adjacency matrices of a graph, some other deformations of these two matrices are well known and frequently studied.

	Samanta, Deepshikha, and Das (see \cite{SamantaDeepshikhaDas}) introduced the one-parameter family
	\begin{equation}\label{eq:B-definition}
		B_{\beta}(G)=\beta A(G)+(1-\beta)L(G),
		\qquad 0\leq\beta\leq1.
	\end{equation}
	Some particular values are
	\[
	B_0=L,\qquad B_{1/2}=\tfrac12D,\qquad
	B_{2/3}=\tfrac13Q,\qquad B_1=A.
	\]
	
	Another important family is the deformed Laplacian introduced by F.~Morbidi in 2013~\cite{Mor2013}, within the context of a generalized continuous-time consensus protocol (see also \cite{GDV2018} who proposed a centrality measure based on nonbacktracking walks):
	\begin{equation}\label{eq:deformed}
		M_G(s)=I-sA(G)+s^2(D(G)-I), s \in \mathbb{R},
	\end{equation}
	has also been studied by \cite{Diaz2026spectral,Oliveira2027deformed} from the limit-point perspective. Some particular values are $M_G(0)=I$, $M_G(-1)=Q$ and $M_G(1)=L$.
	
	One can ask whether Conjecture~\ref{thm:main-conjecture} holds for these families. The next proposition shows a direct application of our results via reparametrization for some values of $\beta$ and $s$. The remaining cases are a subject of future investigation. We would like to thank Prof. Francesco Belardo, who informed us of these relations, which will appear as part of ongoing research on the distribution of Laplacian limit points.
	\begin{proposition} \label{prop:other-families} The following statements hold:
		\begin{enumerate}
			\item For a tree $T$, let $t=|s|>0$, and define
			\[
			\beta_t:=\frac{1+t}{1+2t},\qquad
			\alpha_t:=\frac{t}{1+t}.
			\] 
			Then,
			\begin{equation}\label{eq:deformed-B-A-equivalence}
				M_T(-t)= (1-t^2)I+t(1+2t)B_{\beta_t}(T)
				=(1-t^2)I+t(1+t)A_{\alpha_t}(T).
			\end{equation}
			\item The analogue of Conjecture~\ref{thm:main-conjecture} holds for $B_{\beta}(T)$, for  $ \frac{2}{3} \leq\beta\leq 1$;
			\item The Conjecture~\ref{thm:main-conjecture} is true for $M_T(s) $, for  $ -1 \leq s \leq 1$.
		\end{enumerate}
	\end{proposition}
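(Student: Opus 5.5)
The plan is to reduce all three items to Theorem~\ref{thr:main} through explicit affine reparametrizations. The key observation is that the count of eigenvalues at most the average is invariant under maps $X\mapsto cX+bI$ with $c>0$. Indeed, $\operatorname{tr}(cX+bI)/n=c\,\operatorname{tr}(X)/n+b$, and $\lambda\le \operatorname{tr}(X)/n$ holds iff $c\lambda+b\le \operatorname{tr}(cX+bI)/n$. So in each case it will suffice to write the matrix in question as such a map of some $A_\alpha(T)$ with $0\le\alpha\le 1/2$. Throughout, by the analogue of Conjecture~\ref{thm:main-conjecture} for a matrix $X$ I mean that at least $\lceil n/2\rceil$ eigenvalues of $X$ are $\le \operatorname{tr}(X)/n$.

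\textbf{Item (1).} This is direct algebra. First expand
\[
M_T(-t)=I+tA+t^2(D-I)=(1-t^2)I+tA+t^2D.
\]
Next, using $L=D-A$, write $B_\beta=(2\beta-1)A+(1-\beta)D$. For $\beta_t=\frac{1+t}{1+2t}$ we have $2\beta_t-1=\frac{1}{1+2t}$ and $1-\beta_t=\frac{t}{1+2t}$, so $t(1+2t)B_{\beta_t}=tA+t^2D$. Similarly, for $\alpha_t=\frac{t}{1+t}$ we have $1-\alpha_t=\frac1{1+t}$, so $t(1+t)A_{\alpha_t}=tA+t^2D$. Both identities in \eqref{eq:deformed-B-A-equivalence} follow.

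\textbf{Item (2).} For $\beta\in[2/3,1]$ (so $\beta>0$) I rewrite
\[
B_\beta=(2\beta-1)A+(1-\beta)D=\beta\Big(\tfrac{2\beta-1}{\beta}A+\tfrac{1-\beta}{\beta}D\Big)=\beta A_{\alpha},\qquad \alpha=\tfrac{1-\beta}{\beta},
\]
since $1-\alpha=\frac{2\beta-1}{\beta}$. The condition $\alpha\le 1/2$ is equivalent to $2(1-\beta)\le\beta$, i.e.\ $\beta\ge 2/3$, and $\beta\le1$ gives $\alpha\ge0$. With $c=\beta>0$ and $b=0$, the invariance observation together with Theorem~\ref{thr:main} (including its $\alpha=0$ case at $\beta=1$) gives the claim.

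\textbf{Item (3).} I split into three ranges of $s$.
\begin{itemize}
\item For $s=-t\in[-1,0)$: item (1) gives $M_T(-t)=(1-t^2)I+t(1+t)A_{\alpha_t}$ with $t(1+t)>0$. Moreover $\alpha_t=\frac{t}{1+t}\in(0,1/2]$ exactly when $0<t\le1$, so Theorem~\ref{thr:main} applies.
\item For $s=0$: $M_T(0)=I$, so all $n$ eigenvalues equal the average and the claim is trivial.
\item For $s\in(0,1]$: I use that a tree is bipartite. Let $S$ be the diagonal $\pm1$ signature matrix of a bipartition. Then $SAS=-A$ while $S$ commutes with $D$ and $I$, so $SM_T(s)S=M_T(-s)$. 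Hence $M_T(s)$ and $M_T(-s)$ have the same spectrum and the same trace, and the case $s<0$ applies.
\end{itemize}

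The only step needing care, and the one I expect to be the main point of attention, is the invariance argument. One must use that the positive scalar in front of $A_\alpha$ (namely $\beta$, respectively $t(1+t)$) is strictly positive, so that the inequality direction is preserved. The rest is routine bookkeeping of the parameter ranges.
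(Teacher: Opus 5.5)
Your proof is correct and follows essentially the same route as the paper. Item (1) is verified directly, item (2) uses the rescaling $B_\beta=\beta A_{(1-\beta)/\beta}$ (the paper writes this as $B_{\beta_\alpha}=\beta_\alpha A_\alpha$ with $\beta_\alpha=1/(1+\alpha)$), and item (3) combines the bipartite signature similarity $M_T(s)\sim M_T(-s)$ with the positive affine relation to $A_{\alpha_t}$ for $0<t\le 1$. Stating the invariance under $X\mapsto cX+bI$ with $c>0$ once, as a single observation, is only a tidier organization of the argument the paper repeats case by case.
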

	\begin{proof}
		\begin{enumerate}
			\item  The proof of Equation~\ref{eq:deformed-B-A-equivalence} follows directly by checking the formulas against the definition of each type of matrix (see Equations~\ref{eq:B-definition} and \ref{eq:deformed}).  Since $T$ is a tree, it is bipartite. Hence, $M_T(t)$ and $M_T(-t)$ have the same spectrum when $t=|s|$ (see \cite{Diaz2026spectral} for details).
			\item  The proof is obtained by establishing a correspondence between $B_{\beta}(T)$ and the matrix $A_\alpha$, then applying Theorem~\ref{thr:main}.
			
			Analogously, solving $\alpha_t:=\frac{t}{1+t}$ for $t$ in Equation~\ref{eq:deformed-B-A-equivalence}, one can see that, if $\alpha<1$, then 
			\[B_{\beta_\alpha}(T) =\beta_\alpha A_{\alpha}(T),\]
			where $\beta_\alpha= \frac{1}{1+\alpha}$. Thus, $0\leq \alpha \leq \frac{1}{2}$ if and only if $\frac{2}{3}\leq \beta_\alpha \leq 1$.
			In particular,
			\[d_{\beta_\alpha}=\frac{1}{n} \operatorname{tr}(B_{\beta_\alpha}(T)) =\beta_\alpha \frac{1}{n} \operatorname{tr}(A_{\alpha}(T))= \beta_\alpha d_\alpha.\]
			Moreover, $A_\alpha v = \lambda v$ if and only if  $\beta_\alpha A_\alpha v = \beta_\alpha \lambda v$ if and only if $B_{\beta_\alpha}(T) v = \beta_\alpha \lambda v$, therefore, $\lambda$ is an eigenvalue of $A_\alpha(T)$ if and only if $\beta_\alpha \lambda$  is an eigenvalue of $B_{\beta_\alpha}(T)$. Together, these observations show that if there are at least $\lceil \frac{n}{2}\rceil$ $A_\alpha$-eigenvalues less than or equal to $d_\alpha$ then we also have at least $\lceil \frac{n}{2}\rceil$ $B_{\beta_\alpha}$-eigenvalues less than or equal to $d_{\beta_\alpha}$. Consequently, Conjecture~\ref{thm:main-conjecture} is true for $B_{\beta}$ provided that $\frac{2}{3}\leq \beta  \leq 1$.

			\item   If $t=0$ then $s=0$ and $M_T(0)=I$. Thus, $d_0=1$, $\operatorname{Spec}(M_T(0))=\{1^n\}$ satisfying the Conjecture~\ref{thm:main-conjecture} trivially.

			Otherwise, let $0<t\leq1$. Since $T$ is a tree, it is bipartite. Hence, for the diagonal signature matrix $J$ associated with a bipartition, $M_T(s)$ is similar to $M_T(-s)$. Thus it suffices to consider $M_T(-t)$, and by Equation~\ref{eq:deformed-B-A-equivalence},
			
			\[M_T(-t)=(1-t^2)I+t(1+t)A_{\alpha_t}(T),\qquad \alpha_t:=\frac{t}{1+t}.\]
			
			Note that $0\leq\alpha_t\leq\frac12$ if and only if $t\leq1$. Define $d_s$ as the average of the eigenvalues of $M_T(s)$. By similarity, $M_T(s)$ and $M_T(-t)$ have the same trace, so
			\[d_s=\frac{1}{n}\operatorname{tr}(M_T(s))
			=(1-t^2)+t(1+t)d_{\alpha_t}.\]
			
			If $A_{\alpha_t}(T)v=\lambda v$, then
			\[M_T(-t)v=\bigl[(1-t^2)+t(1+t)\lambda\bigr]v.\]
			Therefore, the eigenvalues of $M_T(-t)$ are obtained from those of $A_{\alpha_t}(T)$ by the affine transformation
			\[\lambda'=(1-t^2)+t(1+t)\lambda.\]
			Since $t(1+t)>0$,
			\[\lambda'\leq d_s
			\iff \lambda\leq d_{\alpha_t}.\]
			By the main theorem, at least $\lceil n/2\rceil$ eigenvalues of $A_{\alpha_t}(T)$ are at most $d_{\alpha_t}$. Hence the same is true for $M_T(-t)$ and, by similarity, for $M_T(s)$. This proves the claim.
		\end{enumerate}
	\end{proof}

	\begin{remark}
		A direct consequence of Proposition~\ref{prop:other-families} is that all the counterexamples we built in Proposition~\ref{prop:counterexamples} are immediately transferred to the matrices $M_T(s)$ and $B_{\beta}(T)$.
	\end{remark}  
	
	\textbf{Use of artificial intelligence:}  Artificial intelligence was used solely for language editing and proofreading. The authors have reviewed the manuscript thoroughly and accept full responsibility for its content and any remaining errors.
	
	\textit{\textbf{Acknowledgements:} 
		Elvia P\'erez thanks CAPES for the doctoral scholarship
		that made this work possible. This research is part of her doctoral thesis. Elismar R. Oliveira is partially supported by  CNPq grant 408180/2023-4.}

\end{document}